\newcommand{\bea}{\begin{eqnarray}}
\newcommand{\eea}{\end{eqnarray}}
\def\beaa{\begin{eqnarray*}}
\def\eeaa{\end{eqnarray*}}
\def\ba{\begin{array}}
\def\ea{\end{array}}
\def\be#1{\begin{equation} \label{#1}}
\def \eeq{\end{equation}}
\def\be{{\beta}}
\def\eps{\epsilon}
\def\al{\alpha}
\def\R{{\mathbb{R}}}
\def\Z{{\mathbb{Z}}}
\def\T{{\mathbb{T}}}
\newtheorem{theorem}{Theorem}[section]
\newtheorem{lemma}[theorem]{Lemma}
\newtheorem{proposition}[theorem]{Proposition}
\newtheorem{corollary}[theorem]{Corollary}
\newtheorem{definition}[theorem]{Definition}
\numberwithin{equation}{section}
\begin{document}

\title{The energy-critical defocusing NLS on $\mathbb{T}^3$}

\author{Alexandru D. Ionescu}
\address{Princeton University}
\email{aionescu@math.princeton.edu}

\author{Benoit Pausader}
\address{Brown University}
\email{benoit.pausader@math.brown.edu}

\thanks{The first author was supported in part by a Packard Fellowship.}

\begin{abstract}
We prove global well-posedness in $H^1(\T^3)$ for the energy-critical defocusing initial-value problem
\begin{equation*}
(i\partial_t+\Delta)u=u|u|^4,\qquad u(0)=\phi.
\end{equation*}
\end{abstract}
\maketitle
\tableofcontents

\section{Introduction}\label{Intro}

Let $\mathbb{T}:=\R/(2\pi\mathbb{Z})$. In this paper we consider the energy-critical defocusing equation
\begin{equation}\label{eq1}
(i\partial_t+\Delta)u=u|u|^4
\end{equation}
in the periodic setting $x\in\mathbb{T}^3$. Suitable solutions on a time interval $I$ of \eqref{eq1} satisfy mass and energy conservation, in the sense that the functions
\begin{equation}\label{conserve}
M(u)(t):=\int_{\T^3}|u(t)|^2\,dx,\qquad E(u)(t):=\frac{1}{2}\int_{\T^3}|\nabla u(t)|^2\,dx+\frac{1}{6}\int_{\T^3}|u(t)|^6\,dx,
\end{equation}
are constant on the interval $I$. Our main theorem concerns global well-posedness in $H^1(\T^3)$ for the initial-value problem associated to the equation \eqref{eq1}.

\begin{theorem}\label{Main1} (Main theorem) If $\phi\in H^1(\T^3)$ then there exists a unique global solution $u\in X^1(\R)$ of the initial-value problem
\begin{equation}\label{eq1.1}
(i\partial_t+\Delta)u=u|u|^4,\qquad u(0)=\phi.
\end{equation}
In addition, the mapping $\phi\to u$ extends to a continuous mapping from $H^1(\T^3)$ to $X^1([-T,T])$ for any $T\in[0,\infty)$, and the quantities $M(u)$ and $E(u)$ defined in \eqref{conserve} are conserved along the flow.
\end{theorem}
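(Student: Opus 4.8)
\emph{Proof sketch.} I would follow the concentration-compactness/rigidity method of Kenig--Merle, adapted to the compact manifold $\T^3$, on which there is no scaling symmetry.

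\emph{Step 1: local theory and reduction to a uniform bound.} Working in the critical spaces $X^1$ (a Bourgain-type space modeled on the atomic spaces $U^p,V^p$ of Koch--Tataru and Herr--Tataru--Tzvetkov), with a companion space $N^1$ for the nonlinearity and a space-time ``controlling'' norm $Z$ built as an $\ell^2$-sum over dyadic frequencies of frequency-localized $L^p_{t,x}$ norms on $\T^3$, the plan is to use the scale-invariant Strichartz estimates on $\T^3$ (Bourgain; Herr--Tataru--Tzvetkov) to establish a multilinear estimate for the quintic nonlinearity, and hence: local well-posedness in $H^1(\T^3)$ with the interval of existence controlled by the \emph{profile} of the data; and a long-time perturbation lemma asserting that an approximate solution $\tilde u$ with $\|\tilde u\|_{Z(I)}<\infty$, error small in $N^1$, and data close in $H^1$ is shadowed on $I$ by a genuine solution close to it in $X^1(I)$. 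Granting this, Theorem~\ref{Main1} is equivalent to the assertion that for every $L$ one has $\Lambda(L):=\sup\{\|u\|_{Z(I)}:E(u)\le L,\ |I|\le 1\}<\infty$: finiteness of $\Lambda(L)$ together with conservation of energy lets one concatenate unit-length intervals to build $u\in X^1(\R)$, the perturbation lemma yields continuity of $\phi\mapsto u$ into $X^1([-T,T])$, and conservation of $M(u)$ and $E(u)$ passes to the limit from smooth data.

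\emph{Step 2: profile decomposition and a minimal blow-up solution.} Suppose $\Lambda$ is not everywhere finite, and let $E_c<\infty$ be the threshold at which it first fails. The core of the argument is a linear profile decomposition for sequences bounded in $H^1(\T^3)$: unlike on $\R^3$, the profiles occur at two kinds of scales --- \emph{Euclidean profiles}, which concentrate at a point $x_k$ along a scale $N_k\to\infty$ and, after rescaling, are governed by the energy-critical NLS on $\R^3$, and \emph{scale-one profiles} living on $\T^3$ itself --- each carried by discrete Galilean boosts, space translations and time translations, with asymptotic orthogonality of the nonlinear (Strichartz) profiles and asymptotic additivity of the energy. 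The companion statement is a Euclidean approximation lemma: a solution of \eqref{eq1} launched from data $N_k^{1/2}\psi(N_k(x-x_k))$ with $N_k\to\infty$ is, on any fixed interval, well approximated in $Z$ by a cutoff of the corresponding $\R^3$ solution, whose global space-time norm is finite by Colliander--Keel--Staffilani--Takaoka--Tao. Feeding the profile decomposition of a blow-up sequence of energies $\to E_c$ into the perturbation lemma forces exactly one profile to survive, with energy equal to $E_c$, producing a solution $u_c$ whose orbit is precompact in $H^1(\T^3)$ modulo the symmetries and with $\|u_c\|_Z=\infty$.

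\emph{Step 3: rigidity.} The Euclidean approximation lemma already excludes the possibility that $u_c$ concentrates at scale $N_k\to\infty$, since it would then inherit the finite $\R^3$ bound; it remains to rule out a scale-one, almost-periodic $u_c$ of bounded energy (hence bounded $H^1$ norm) and bounded mass on the compact domain $\T^3$. Here I would run a virial/interaction-Morawetz monotonicity adapted to $\T^3$, using the compactness of the orbit to localize and conservation of mass and energy to control the error and boundary terms, to conclude that $\int_I\int_{\T^3}|u_c|^6$ is finite on the interval of existence; by the local theory this forces $\|u_c\|_Z<\infty$, a contradiction. Hence $\Lambda(L)<\infty$ for all $L$, which completes the proof of Theorem~\ref{Main1}. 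The main obstacle is Step~2: the profile decomposition together with the Euclidean approximation is where the periodic geometry genuinely enters and where the heavy machinery is needed --- the sharp scale-invariant Strichartz estimates on $\T^3$ (which at this regularity carry an $N^\epsilon$ loss that must be absorbed with care, e.g.\ using the $V^p$ structure), a clean separation of the Euclidean and torus regimes, and the transfer of the large-data global theory from $\R^3$ to the concentrating profiles on $\T^3$.
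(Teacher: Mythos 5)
Your Steps 1 and 2 are broadly consistent with the paper's machinery (critical $X^1/Z$ spaces built on $U^p,V^p$, a stability lemma, a profile decomposition into Euclidean profiles plus a scale-one component, and the transfer of the Colliander--Keel--Staffilani--Takaoka--Tao global theory to concentrating profiles). But Step 3 contains a genuine gap: there is no known virial or interaction-Morawetz monotonicity on $\T^3$ that rules out a scale-one almost-periodic solution. The interaction Morawetz argument relies on the convexity/decay structure of $|x-y|$ on $\R^3$ and on global dispersion, both of which fail on a compact manifold; a truncated virial weight on the torus produces boundary and commutator errors that are not sign-definite or controllable by the conserved quantities. This missing rigidity step is precisely the obstruction the paper is designed to avoid, so as written your plan would stall exactly at the point where the periodic geometry bites hardest. (A secondary issue: even granting finiteness of $\int_I\int_{\T^3}|u_c|^6$, passing from that to finiteness of the critical $Z$-norm is not automatic.)

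The paper's route is structurally different at exactly this point and never constructs a minimal blow-up solution at all. It defines $\Lambda(L,\tau)$ over intervals of length at most $\tau$, sets $\Lambda_\ast(L)=\lim_{\tau\to0}\Lambda(L,\tau)$ and $E_{max}=\sup\{L:\Lambda_\ast(L)<\infty\}$, and derives a contradiction from a sequence of solutions blowing up in $Z$ on intervals $I_k$ with $|I_k|\to0$. The decisive observation is that a scale-one evolution cannot cause blow-up in arbitrarily short time: in the case where the weak limit $g$ carries all the energy (your ``scale-one critical element''), the linear flow $e^{it\Delta}g$ has small $Z$-norm on a sufficiently short interval, so the small-data local theory already gives a contradiction --- no monotonicity formula is needed. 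The remaining cases (a single Euclidean profile, or several profiles/nonzero $g$ each with energy strictly below $E_{max}$) are handled by the Euclidean theory plus an extinction lemma for the linear flow past the Euclidean time window, and by building an approximate solution $W+\sum_\alpha U^\alpha_k+e^{it\Delta}R^J_k$ whose error is controlled by the interaction estimates of Section \ref{ProofLastLem} and then invoking stability. Note also that your reduction fixes $|I|\le 1$; the shrinking-interval formulation is essential, since with unit-length intervals you would genuinely have to exclude a scale-one quasi-soliton over a fixed time, which is exactly the unavailable ingredient. Your proposal as stated therefore does not close, whereas the paper's induction-on-energy-with-shrinking-intervals argument does.
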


The uniqueness spaces $X^1(I)\subseteq C(I:H^1(\T^3))$ in the theorem above are defined precisely by Herr--Tataru--Tzvetkov \cite{HeTaTz} and \cite{HeTaTz2}.

The corresponding result in the Euclidean space $\mathbb{R}^3$ was proved by Colliander--Keel--Staffilani--Takaoka--Tao \cite{CKSTTcrit} (see also \cite{B,Cazenave:book,G,KeMe,Ker,KiVi,RV,V}) and is an important tool in our analysis. Motivated by this result, there has been interest to obtain global existence of the defocusing energy-critical equation in more general manifolds, see \cite{He,HeTaTz,HeTaTz2,IoPa,IoPaSt}.

Theorem \ref{Main1} completes both previous results of the authors and G. Staffilani \cite{IoPa,IoPaSt} about the energy-critical nonlinear Schr\"odinger equation on different manifolds, such as $\mathbb{H}^3$ and $\mathbb{R}\times\mathbb{T}^3$, and previous results of Bourgain \cite{Bo1} who proved the global existence of solutions in the energy-subcritical case. It also extends the recent results in Herr--Tataru--Tzvetkov \cite{HeTaTz} who proved global existence of small energy solutions of \eqref{eq1}.

We also refer to \cite{Bo2,BuGeTz,BuGeTz3,BuGeTz2,CKSTT,GePi,Ha} for other results of global existence and large time behavior of subcritical Schr\"odinger equations on compact manifolds.

In this paper we extend and refine the strategy from \cite{IoPa} to the case when no global dispersion is allowed. The main new ingredients that we need are an extinction result which is here provided by Lemma \ref{Extinction}, and a better study of the error term in the construction of an approximate solution in Lemma \ref{AlmostSol}.

The extinction argument is obtained by decomposing the linear propagator into a component which lives on a time interval during which all wave packets travel a distance $\sim 1$, and another component where the wave packets have had time to exit a given ball, but not to refocus more than $o(1)$ percent of their modes.

The analysis of the interaction between nonlinear profiles and linear solutions which are sufficiently far from saturating the Sobolev inequality is done in section \ref{ProofLastLem}. The qualitative fact we need, see for example \cite{AnMa,Ma2}, is that any limit of the quantum measure associated to $\left(\int_0^1\vert\nabla e^{it\Delta}R^J_k\vert^2dt\right)dx$ is absolutely continuous with respect to $dx$. Hence, we expect the effect of the interaction of $e^{it\Delta}R^J_k$ with a concentrating function $N_k^{1/2}\phi(N_kx, N_k^2t)$, $N_k\to+\infty$ to be negligible as $k\to+\infty$. In our case we need strong convergence and quantitative bounds, which we prove in section \ref{ProofLastLem}. 

On the other hand, this does not rule out interaction with a Scale-1 function and it is difficult to adapt the argument from \cite{IoPa} which relies on some smoothing-effect type estimate. However, we note that in order to obtain global existence, it suffices to rule out concentration of the solution on {\it arbitrarily small} time intervals, during which a Scale-1 evolution did not have time to occur.

The arguments we present here appear quite robust and we expect adaptations of them to hold on more general compact manifolds (as long as one has a critical small-data theory), for example on Zoll manifolds which have been recently been studied in \cite{He}.

The rest of the paper is organized as follows. In Section \ref{prelim}, we introduce our notations and state some previous results. In Section \ref{localwp}, we use previous results of Herr-Tataru-Tzvetkov \cite{HeTaTz} to develop a large-data local well-posedness and stability theory for the equation \eqref{eq1}. In Section \ref{Eucl}, we study the behavior of solutions to the linear and nonlinear equation concentrating to a point in space and time. In Section \ref{profiles}, we recall the profile decomposition from \cite{IoPa} (see also \cite{Ker}) to address the loss of compactness of the Sobolev-Strichartz inequality. In Section \ref{proofthm}, we prove Theorem \ref{Main1}, except for a lemma about approximate solutions which is finally proved in Section \ref{ProofLastLem}.

\section{Preliminaries}\label{prelim}
In this section we summarize our notations and collect several lemmas that are used in the rest of the paper.

We write $A\lesssim B$ to signify that there is a constant $C>0$ such that $A\le C B$. We write $A\simeq B$ when $A\lesssim B\lesssim A$. If the constant $C$ involved has some explicit dependency, we emphasize it by a subscript. Thus $A\lesssim_uB$ means that $A\le C(u)B$ for some constant $C(u)$ depending on $u$.

We write $F(z)=z\vert z\vert^4$ the nonlinearity in \eqref{eq1}. For $p\in\mathbb{N}^n$ a vector, we denote by $\mathfrak{O}_{p_1,\dots ,p_n}(a_1,\dots,a_n)$ a $\vert p\vert$-linear expression which is a product of $p_1$ terms which are either equal to $a_1$ or its complex conjugate $\overline{a}_1$ and similarly for $p_j$, $a_j$, $2\le j\le n$.

We define the Fourier transform on $\mathbb{T}^3$ as follows
\begin{equation*}
\left(\mathcal{F}f\right)(\xi):=\frac{1}{(2\pi)^\frac{3}{2}}\int_{\mathbb{T}^3}f(x)e^{-ix\cdot\xi}dx.
\end{equation*}
We also note the Fourier inversion formula
\begin{equation*}
f(x)=\frac{1}{(2\pi)^\frac{3}{2}}\sum_{\xi\in\mathbb{Z}^3}\left(\mathcal{F}f\right)(\xi)e^{ix\cdot\xi}.
\end{equation*}
We define the Schr\"odinger propagator $e^{it\Delta}$ by
\begin{equation*}
\left(\mathcal{F}e^{it\Delta}f\right)(\xi):=e^{-it|\xi|^2}\left(\mathcal{F}f\right)(\xi).
\end{equation*}

We now define the Littlewood-Paley projections. We fix $\eta^{1}:\mathbb{R}\to[0,1]$ a smooth even function with $\eta^1(y)=1$ if $|y|\leq 1$ and $\eta^1(y)=0$ if $|y|\geq 2$. Let $\eta^3:\mathbb{R}^3\to[0,1]$, $\eta^3(\xi):=\eta^1(\xi_1)^2\eta^1(\xi_2)^2\eta^1(\xi_3)^2$. We define the Littlewood-Paley projectors $P_{\le N}$ and $P_N$ for $N=2^j\ge 1$ a dyadic integer by
\begin{equation*}
\begin{split}
&\mathcal{F}\left(P_{\le N}f\right)\left(\xi\right):=\eta^3(\xi/N)\left(\mathcal{F}f\right)(\xi),\qquad\xi\in \Z^3,\\
&P_1f:=P_{\le 1}f,\qquad P_Nf:=P_{\le N}f-P_{\le N/2}f\quad\hbox{if}\quad N\ge 2.
\end{split}
\end{equation*}
For any $a\in (0,\infty)$ we define
\begin{equation*}
P_{\leq a}:=\sum_{N\in 2^{\Z_+},\,N\leq a}P_N,\qquad P_{> a}:=\sum_{N\in 2^{\Z_+},\,N>a}P_N.
\end{equation*}

{\bf{Function spaces.}} The strong spaces are the same as the one used by Herr-Tataru-Tzvetkov \cite{HeTaTz,HeTaTz2}. Namely
\begin{equation*}
\begin{split}
\Vert u\Vert_{X^s(\mathbb{R})}&:=\left(\sum_{z\in\mathbb{Z}^3}\langle \xi\rangle^{2s}\Vert e^{it|\xi|^2}(\mathcal{F}u(t))(\xi)\Vert_{U^2_t}^2\right)^\frac{1}{2},\\
\Vert u\Vert_{Y^s(\mathbb{R})}&:=\left(\sum_{z\in\mathbb{Z}^3}\langle \xi\rangle^{2s}\Vert e^{it|\xi|^2}(\mathcal{F}u(t))(\xi)\Vert_{V^2_t}^2\right)^\frac{1}{2},
\end{split}
\end{equation*}
where we refer to \cite{HeTaTz,HeTaTz2} for a description of the spaces $U^p$ and $V^p$ and of their properties. Note in particular that
\begin{equation*}
X^1(\R)\hookrightarrow Y^1(\R)\hookrightarrow L^\infty(\mathbb{R},H^1).
\end{equation*}

For intervals $I\subset\mathbb{R}$, we define $X^1(I)$ in the usual way as restriction norms, thus
\begin{equation*}
X^1(I):=\{u\in C(I:H^1):\Vert u\Vert_{X^1(I)}:=\sup_{J\subseteq I,\,|J|\leq 1}[\inf\{\Vert v\Vert_{X^1(\mathbb{R})}:v_{\vert J}=u\}]<\infty\}.
\end{equation*}
The norm controling the inhomogeneous term on an interval $I=(a,b)$ is then defined as
\begin{equation}\label{NNorm}
\Vert h\Vert_{N(I)}:=\Big \|\int_a^te^{i(t-s)\Delta}h(s)ds\Big\|_{X^1(I)}.
\end{equation}

We also need a weaker critical norm
\begin{equation}\label{Alex19}
\begin{split}
&\Vert u\Vert_{Z(I)}:=\sum_{p\in\{p_0,p_1\}}\sup_{J\subseteq I,\vert J\vert\le 1}(\sum_NN^{5-p/2}\Vert P_Nu(t)\Vert_{L^{p}_{x,t}(\mathbb{T}^3\times J)}^{p})^{1/p},\\
&p_0=4+1/10,\qquad p_1=100.
\end{split}
\end{equation}
A consequence of Strichartz estimates from Theorem \ref{Stric} below is that
\begin{equation*}
\Vert u\Vert_{Z(I)}\lesssim \Vert u\Vert_{X^1(I)},
\end{equation*}
thus $Z$ is indeed a weaker norm. The purpose of this norm is that it is fungible and still controls the global evolution, as will be manifest from the local theory in Section \ref{localwp}.

{\bf Definition of solutions}.
Given an interval $I\subseteq\R$, we call $u\in C(I:H^1(\mathbb{T}^3))$ a strong solution of \eqref{eq1} if $u\in X^1(I)$ and $u$ satisfies that for all $t,s\in I$,
\begin{equation*}
u(t)=e^{i(t-s)\Delta}u(s)-i\int_s^te^{i(t-t^\prime)\Delta}\left(u(t^\prime)\vert u(t^\prime)\vert^4\right)dt^\prime.
\end{equation*}

{\bf Dispersive estimates}. We recall the following result from \cite{Bo1}.

\begin{theorem}\label{Stric}
If $p>4$ then
\begin{equation*}
\Vert P_Ne^{it\Delta}f\Vert_{L^p_{x,t}(\mathbb{T}^3\times [-1,1])}\lesssim_p N^{\frac{3}{2}-\frac{5}{p}}\Vert P_Nf\Vert_{L^2(\mathbb{T}^3)}.
\end{equation*}
\end{theorem}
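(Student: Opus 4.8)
\emph{Proof proposal.} This is Bourgain's periodic Strichartz inequality, and the plan is to reproduce the circle-method proof. Since $|\xi|^2\in\Z$, the propagator $e^{it\De}$ is $2\pi$-periodic in $t$, so it suffices to prove the estimate with $[-1,1]$ replaced by $\T=\R/2\pi\Z$; and we may take $N\ge 2$ dyadic, the case $N=1$ being Bernstein's inequality. By the $TT^*$ method,
\[
\|P_Ne^{it\De}f\|_{L^p(\T^3\times\T)}^2\lesssim \big\|G\mapsto K_N\ast G\big\|_{L^{p'}(\T^3\times\T)\to L^p(\T^3\times\T)},
\]
where $\ast$ is convolution on $\T^3\times\T$ and $K_N$ is the kernel of $P_N^2e^{it\De}$, explicitly $K_N(x,t)=\sum_{\xi\in\Z^3}m_N(\xi)^2e^{i(x\cdot\xi-t|\xi|^2)}$ with $m_N$ the symbol of $P_N$ ($|m_N|\lesssim 1$, supported on $\{|\xi|\simeq N\}$). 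One input is free: the Fourier coefficients of $K_N$ are $O(1)$, so convolution by $K_N$ is bounded on $L^2(\T^3\times\T)$ with norm $\lesssim 1$. The goal is thus $\|G\mapsto K_N\ast G\|_{L^{p'}\to L^p}\lesssim N^{3-10/p}\,(=N^{2(3/2-5/p)})$.

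The second and main step is a pointwise analysis of $K_N$ via a Dirichlet dissection of the time circle with denominators up to $Q_0\sim N$. For $t$ at distance $\lesssim(qN)^{-1}$ from $2\pi a/q$ with $(a,q)=1$ and $q\le Q_0$, one factors the sum over the three coordinates of $\xi$, evaluates the resulting complete Gauss sums, and estimates the remaining oscillatory integral by van der Corput (equivalently, compares with the Euclidean Schr\"odinger kernel); this shows that $K_N(\cdot,t)$ is, up to acceptable errors, a superposition over $b\in(\Z/q\Z)^3$ of rescaled Euclidean Schr\"odinger kernels centered near $x\equiv2\pi b/q$. In particular $K_N(\cdot,t)$ is concentrated near those $\sim q^3$ points and small in $L^1_x$, with decay in $q$ and in the distance to $2\pi a/q$ matching the Euclidean dispersive estimate; on the minor arcs Weyl's inequality gives an even better bound.

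Finally one sums: on each piece of the dissection (indexed by $q$, by the $\lesssim q^2$ residues $a$, and by the dyadic size of the distance to $2\pi a/q$) interpolate the $L^2\to L^2$ bound with a Young-type convolution estimate built from the kernel bound above, then add up and optimize in $Q_0$. The convergence of this sum is precisely what forces $p>4$ (for $p\le 4$ one recovers at best the logarithmically lossy $L^4$ theory), and it yields the sharp power $N^{3/2-5/p}$ with no loss. The chief obstacle is the bookkeeping in the last two steps: carrying the spatial concentration of the major-arc kernels through the convolution estimate (a crude $L^1_{x,t}\to L^\infty_{x,t}$ bound using only $\|K_N\|_{L^\infty_{x,t}}\simeq N^3$ loses a power of $N$), and checking that the exponents assemble to exactly $3/2-5/p$. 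A softer alternative, avoiding number theory beyond the divisor bound but losing an $\ep$, is to interpolate the Bernstein estimate $\|P_Ne^{it\De}f\|_{L^\infty(\T^3\times\T)}\lesssim N^{3/2}\|f\|_{L^2}$ with the $L^4$ estimate $\|P_Ne^{it\De}f\|_{L^4(\T^3\times\T)}\lesssim_\ep N^{1/4+\ep}\|f\|_{L^2}$; the latter follows by writing $\|P_Ne^{it\De}f\|_{L^4}^4$ as a count of solutions of $\xi_1+\xi_2=\xi_3+\xi_4$, $|\xi_1|^2+|\xi_2|^2=|\xi_3|^2+|\xi_4|^2$ and using $r_3(m):=\#\{n\in\Z^3:|n|^2=m\}=O_\ep(m^{1/2+\ep})$. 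Interpolation then gives $\|P_Ne^{it\De}f\|_{L^p}\lesssim_\ep N^{3/2-5/p+\ep}\|f\|_{L^2}$ for every $p>4$ and $\ep>0$ — adequate for many applications, but not the sharp bound above.
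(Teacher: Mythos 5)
This theorem is not proved in the paper at all: it is quoted verbatim from Bourgain \cite{Bo1}, so the only meaningful comparison is with Bourgain's original argument, which your outline indeed tracks (periodicity in $t$, $TT^*$ reduction to a convolution bound for the kernel $K_N$, Dirichlet/Farey dissection of the time variable, Gauss-sum/major-arc analysis of $K_N$). The difficulty is that the two decisive steps of that argument are only asserted, not carried out. The major-arc kernel bound you describe is exactly \eqref{no2}--\eqref{no3} (quoted in the paper from \cite[Lemma 3.18]{Bo2}), so you could import it; but the passage from that pointwise bound to $\Vert K_N\ast\cdot\Vert_{L^{p'}\to L^p}\lesssim N^{3-10/p}$ is precisely where the theorem lives, and "interpolate the $L^2\to L^2$ bound with a Young-type convolution estimate, then add up and optimize in $Q_0$" does not do it: as you yourself note, any argument that only uses $\Vert K_N(\cdot,t)\Vert_{L^\infty_x}$ loses powers of $N$, and summing the dispersive-type bound over $q\le Q_0$ and over the $\sim q^2$ residues $a$ by plain Young/interpolation does not assemble into the sharp exponent. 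Bourgain's actual route goes through level-set (distributional) estimates for $|e^{it\Delta}P_Nf|$, decomposing the kernel according to the size of $q$ and of $|t/2\pi-a/q|$ and exploiting the arithmetic structure of the large values; the constraint $p>4$ emerges from that counting, not from a convergence check in a Young-type sum. Also, with $Q_0\sim N$ the Dirichlet dissection covers all times, so there are no genuine "minor arcs" to dispatch by Weyl; and the claim that $K_N(\cdot,t)$ is "small in $L^1_x$" is not correct as stated. So the proposal is a reasonable road map, but the proof of the stated inequality is missing at its core.

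Your fallback argument (interpolating Bernstein with the $L^4$ bound obtained from $r_3(m)=O_\epsilon(m^{1/2+\epsilon})$) is correct, but it proves a strictly weaker statement, $N^{3/2-5/p+\epsilon}$, and this would not serve as a substitute in this paper: the $Z$-norm weights $N^{5-p/2}$ in \eqref{Alex19} are tuned so that the $\epsilon$-free estimate gives exactly the $H^1$-critical scaling (e.g.\ $\Vert u\Vert_{Z(I)}\lesssim\Vert u\Vert_{X^1(I)}$, Corollary \ref{alex1}, and the nonlinear estimates of Lemma \ref{alex2}); an $\epsilon$ loss would effectively demand $H^{1+\epsilon}$ control, which is not available in an energy-critical problem. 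So for the statement as used here you must either reproduce Bourgain's level-set argument in full or, as the paper does, cite \cite{Bo1}.
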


As a consequence of the properties of the $U^p_\Delta$ spaces, we have:

\begin{corollary}\label{alex1}
If $p>4$ then for any cube $C$ of size $N$ and any interval $I$, $|I|\leq 1$,
\begin{equation}\label{UpEst}
\Vert P_Cu\Vert_{L^p_{x,t}(\mathbb{T}^3\times I)}\lesssim N^{\frac{3}{2}-\frac{5}{p}}\Vert u\Vert_{U^p_\Delta(I,L^2)}.
\end{equation}
\end{corollary}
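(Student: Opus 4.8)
The plan is to deduce Corollary~\ref{alex1} from Theorem~\ref{Stric} by the standard $U^p$-atomic transference argument, exploiting that $U^p_\Delta$ is built from $U^p$-atoms along the Schr\"odinger flow. First I would recall that every $u\in U^p_\Delta(I,L^2)$ can be written as an absolutely convergent sum $u=\sum_k c_k a_k$ of $U^p_\Delta$-atoms $a_k$, where each atom has the form $a_k(t)=\sum_{j}\mathbf{1}_{[t_{j-1}^k,t_j^k)}(t)\,e^{it\Delta}f_j^k$ for a partition $\{t_j^k\}$ of $I$ and functions $f_j^k\in L^2$ with $\sum_j\|f_j^k\|_{L^2}^p\le 1$, and with $\sum_k|c_k|\lesssim \|u\|_{U^p_\Delta(I,L^2)}$ (up to taking an infimum over such representations). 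Since $P_C$ commutes with $e^{it\Delta}$ and with the time cutoffs, it suffices by the triangle inequality in $L^p_{x,t}$ — here we use $p\ge 1$ — to prove the bound for a single atom $a$, i.e. to show $\|P_C a\|_{L^p_{x,t}(\mathbb{T}^3\times I)}\lesssim N^{3/2-5/p}$.

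The second step is the single-atom estimate. On each interval $[t_{j-1},t_j)$ the atom equals $e^{it\Delta}f_j$, so $P_C a$ restricted there is $P_C e^{it\Delta}f_j = e^{it\Delta}P_C f_j$. The only subtlety is that Theorem~\ref{Stric} is stated for the Littlewood--Paley projector $P_N$ on a ball centered at the origin and on the fixed interval $[-1,1]$, whereas here we have a projection onto an arbitrary cube $C$ of side-length $N$ and an arbitrary subinterval $I$ with $|I|\le 1$. The interval issue is immediate: $L^p_{x,t}(\mathbb{T}^3\times[t_{j-1},t_j))\subseteq L^p_{x,t}(\mathbb{T}^3\times[-1,1])$ after a time translation, which commutes with $e^{it\Delta}$ and only changes $f_j$ by a unimodular Fourier multiplier, so the $L^2$ norm is preserved. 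For the cube-versus-ball issue, I would write $C$ as contained in a translate of the ball $\{|\xi|\le N\}$ — more precisely, if $C$ is centered at $\xi_0$ then the Galilean transformation $g(x,t)=e^{i(x\cdot\xi_0 - t|\xi_0|^2)}$ conjugates frequency translation by $\xi_0$ to the identity and conjugates $e^{it\Delta}$ to itself, reducing to a cube centered at the origin, which is contained in $\{|\xi|\le \sqrt{3}N\}$, covered by $O(1)$ standard Littlewood--Paley blocks $P_{N'}$ with $N'\simeq N$. Applying Theorem~\ref{Stric} to each such block and summing the $O(1)$ contributions gives $\|P_C e^{it\Delta}f_j\|_{L^p_{x,t}(\mathbb{T}^3\times[t_{j-1},t_j))}\lesssim N^{3/2-5/p}\|f_j\|_{L^2}$.

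The third step assembles the pieces: raising to the $p$-th power, summing over $j$ the disjoint time intervals, and using $\sum_j\|f_j\|_{L^2}^p\le 1$ gives $\|P_C a\|_{L^p_{x,t}(\mathbb{T}^3\times I)}^p = \sum_j \|P_C e^{it\Delta}f_j\|_{L^p_{x,t}(\mathbb{T}^3\times[t_{j-1},t_j))}^p \lesssim N^{p(3/2-5/p)}\sum_j\|f_j\|_{L^2}^p \le N^{p(3/2-5/p)}$, i.e. $\|P_C a\|_{L^p_{x,t}}\lesssim N^{3/2-5/p}$ for any atom $a$. Combining with the atomic decomposition and $\sum_k|c_k|\lesssim\|u\|_{U^p_\Delta(I,L^2)}$ yields \eqref{UpEst}.

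The main obstacle — really the only nonroutine point — is the passage from the ball-shaped Littlewood--Paley projection in Theorem~\ref{Stric} to the projection $P_C$ onto an arbitrary cube, since Theorem~\ref{Stric} as stated does not directly see off-center frequency cubes. This is handled by the Galilean-invariance of the Schr\"odinger equation (a modulation in $x$ and a phase in $t$), which on $\mathbb{T}^3$ with integer frequencies is exact, so the $L^p_{x,t}$ norm is unchanged and the estimate transfers with an $O(1)$ loss from covering the cube by finitely many dyadic blocks. Everything else — the reduction to atoms, the translation in time, and the summation — is the standard $U^p$-transference mechanism from \cite{HeTaTz,HeTaTz2}.
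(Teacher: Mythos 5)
Your argument is correct and is exactly the standard route the paper leaves implicit (it invokes only ``the properties of the $U^p_\Delta$ spaces,'' i.e.\ the atomic transference principle of Herr--Tataru--Tzvetkov): reduce to a single $U^p_\Delta$-atom, apply the free Strichartz bound on each step of the atom, sum the $p$-th powers over the disjoint time steps, and handle off-center cubes by Galilean invariance. Two harmless imprecisions worth fixing: the Galilean boost that moves the cube to the origin is $u(x,t)\mapsto e^{i(x\cdot\xi_0-t|\xi_0|^2)}u(x-2t\xi_0,t)$ --- the spatial drift $x\mapsto x-2t\xi_0$ is needed for the conjugation of $e^{it\Delta}$, though it too preserves $L^p_{x,t}$ on $\T^3$ since $\xi_0\in\Z^3$ --- and the centered cube is covered by all dyadic blocks $N'\lesssim N$ rather than $O(1)$ blocks of size $\simeq N$, whose contributions still sum to $N^{3/2-5/p}$ because $3/2-5/p>0$ for $p>4$.
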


We will also use the following results from Herr--Tataru--Tzvetkov \cite{HeTaTz}.

\begin{proposition}[\cite{HeTaTz}, Proposition 2.11]\label{Alex3}
If $f\in L^1_t(I,H^1(\mathbb{T}^3))$ then
\begin{equation}\label{EstimNnorm}
\Vert f\Vert_{N(I)}\lesssim \sup_{\{\Vert v\Vert_{Y^{-1}(I)}\le1\}}\int_{\mathbb{T}^3\times I}f(x,t)\overline{v(x,t)}dxdt.
\end{equation}
In particular, there holds for any smooth function $g$ that
\begin{equation}\label{EstimX1Norm}
\Vert g\Vert_{X^1([0,1])}
\lesssim \Vert g(0)\Vert_{H^1}+(\sum_N\Vert P_N\left(i\partial_t+\Delta\right)g\Vert_{L^1_t([0,1],H^1)}^2)^\frac{1}{2}.
\end{equation}
\end{proposition}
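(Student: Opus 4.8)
The plan is to derive \eqref{EstimNnorm} from the duality $(DU^2_\Delta)^\ast=V^2_\Delta$ of the adapted function spaces (Koch--Tataru, Hadac--Herr--Koch), applied one Fourier mode at a time, and then to reassemble the modes by Cauchy--Schwarz in $\ell^2(\Z^3)$; the estimate \eqref{EstimX1Norm} will then follow by combining \eqref{EstimNnorm} with the Duhamel formula and a Littlewood--Paley decomposition.

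First I would unwind the definitions. Set $H_\xi(t):=e^{it|\xi|^2}(\mathcal Fh(t))(\xi)$ and $V_\xi(t):=e^{it|\xi|^2}(\mathcal Fv(t))(\xi)$; since $e^{-it\Delta}\int_a^te^{i(t-s)\Delta}h(s)\,ds=\int_a^te^{-is\Delta}h(s)\,ds$, the definition \eqref{NNorm} and the definition of $Y^{-1}(I)$ give
\begin{equation*}
\Vert h\Vert_{N(I)}^2=\sum_{\xi\in\Z^3}\langle\xi\rangle^2\Big\Vert\int_a^tH_\xi(s)\,ds\Big\Vert_{U^2_t(I)}^2,\qquad\Vert v\Vert_{Y^{-1}(I)}^2=\sum_{\xi\in\Z^3}\langle\xi\rangle^{-2}\Vert V_\xi\Vert_{V^2_t(I)}^2,
\end{equation*}
while Plancherel gives $\int_{\T^3\times I}h\overline v\,dx\,dt=\sum_\xi\int_IH_\xi(t)\overline{V_\xi(t)}\,dt$. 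The scalar fact I rely on is that the $U^2_t(I)$ norm of the antiderivative $t\mapsto\int_a^tH(s)\,ds$ is controlled by $\sup\{\,|\int_IH\overline W\,dt|:\Vert W\Vert_{V^2_t(I)}\le1\,\}$. So for each $\xi$ I pick $W_\xi$ in the $V^2_t(I)$ unit ball with $|\int_IH_\xi\overline{W_\xi}\,dt|\gtrsim\Vert\int_a^tH_\xi\,ds\Vert_{U^2_t(I)}$ and set $(\mathcal Fv(t))(\xi):=\lambda_\xi e^{-it|\xi|^2}W_\xi(t)$, with $\lambda_\xi\ge0$ proportional to $\langle\xi\rangle^2\Vert\int_a^tH_\xi\,ds\Vert_{U^2_t(I)}$, the phases chosen to align the pairings, and the proportionality constant fixed so that $\Vert v\Vert_{Y^{-1}(I)}=1$. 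A short computation then gives $\int_{\T^3\times I}h\overline v\,dx\,dt\gtrsim\Vert h\Vert_{N(I)}$, which is \eqref{EstimNnorm}. (A routine truncation in $\xi$ makes this choice of $v$ legitimate; the reverse inequality, not needed here, is immediate from Cauchy--Schwarz.)

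For \eqref{EstimX1Norm} I would use the Duhamel identity: if $(i\partial_t+\Delta)g=G$ on $[0,1]$ then $g(t)=e^{it\Delta}g(0)-i\int_0^te^{i(t-s)\Delta}G(s)\,ds$, so $\Vert g\Vert_{X^1([0,1])}\lesssim\Vert e^{it\Delta}g(0)\Vert_{X^1(\R)}+\Vert G\Vert_{N([0,1])}$, and the first term equals $\Vert g(0)\Vert_{H^1}$ up to an absolute constant (since $e^{it|\xi|^2}(\mathcal Fe^{it\Delta}g(0))(\xi)$ is independent of $t$). For the second term, apply \eqref{EstimNnorm}, write $G=\sum_NP_NG$, and for each dyadic $N$ use $\widetilde P_NP_N=P_N$ (with $\widetilde P_N$ a slightly enlarged projector) together with H\"older:
\begin{equation*}
\Big|\int_{\T^3\times[0,1]}P_NG\,\overline v\,dx\,dt\Big|\le\Vert P_NG\Vert_{L^1_tL^2_x}\Vert\widetilde P_Nv\Vert_{L^\infty_tL^2_x}\lesssim\Vert P_NG\Vert_{L^1_tH^1}\Vert\widetilde P_Nv\Vert_{L^\infty_tH^{-1}}.
\end{equation*}
Summing in $N$, applying Cauchy--Schwarz, and bounding $\sum_N\Vert\widetilde P_Nv\Vert_{L^\infty_tH^{-1}}^2\lesssim\Vert v\Vert_{Y^{-1}([0,1])}^2\le1$ (using $V^2_t\hookrightarrow L^\infty_t$) then yields \eqref{EstimX1Norm}.

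The one genuinely nontrivial ingredient, and the place I expect to be the main obstacle, is the scalar duality $(DU^2_\Delta)^\ast=V^2_\Delta$: the assertion that the $U^2$ norm of an antiderivative is \emph{dominated by} --- not merely dominates --- the supremum of its pairings against the $V^2$ unit ball. This is exactly the input I would import from Koch--Tataru / Hadac--Herr--Koch rather than reprove; granting it, the reduction to Fourier modes, the $\ell^2(\Z^3)$ reassembly, the Duhamel step, and the Littlewood--Paley bookkeeping are all routine.
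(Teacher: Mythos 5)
Your argument is correct, and it is essentially the proof of the cited reference: the paper does not prove Proposition \ref{Alex3} itself but imports it from \cite{HeTaTz}, whose proof rests on exactly the scalar duality between $U^2$ and $V^2$ (from Hadac--Herr--Koch) applied mode by mode, with the same $\ell^2$ reassembly; your derivation of \eqref{EstimX1Norm} via Duhamel, \eqref{EstimNnorm}, and the Littlewood--Paley/Cauchy--Schwarz step with $V^2_t\hookrightarrow L^\infty_t$ is likewise the standard route. The only points to keep in mind are routine: measurability/truncation in the choice of the near-extremizers $W_\xi$, and the restriction-norm formulation of $X^1(I)$, neither of which affects the argument.
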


%%%%%%%%%%%%%%%%%%%%%%%%%%%%%%%%%%%%%%%%%%%%%%%%%%%%%%%%%%%%%%%%%%%%%%%%%%%%%%%%%%%%%%%%%%%%%%%%%%%%%%%%%%%%%%%%%%%%%%%%%%%%%%%%%%%%%%%%%%%%%%%%%%%%%%%%%%%%%%%%%%%%%%%%%%%%%%%%%%%%%%%%%%%%%%%%%%%%%%%%%%%%%%%%%%%%%%%%%%%%%%%%

\section{Local well-posedness and stability theory}\label{localwp}

In this section we present large-data local well-posedness and stability results that allow us to connect nearby intervals of nonlinear evolution. A consequence of \cite{HeTaTz} is that the Cauchy problem for \eqref{eq1} is locally well-posed. However, here we want slightly more precise results.

We start with a nonlinear estimate. The goal here is to obtain estimates which are linear in a norm controlling $L^\infty_tH^1$. For this we introduce
\begin{equation}\label{Zprime}
\Vert u\Vert_{Z^\prime(I)}=\Vert u\Vert_{Z(I)}^\frac{1}{2}\Vert u\Vert_{X^1(I)}^\frac{1}{2}.
\end{equation}
We have the following result:

\begin{lemma}\label{alex2}
There exists $\delta>0$ such that if $u_1,u_2,u_3$ satisfy $P_{N_i}u_i=u_i$ with $N_1\ge N_2\ge N_3$ and $|I|\leq 1$, then
\begin{equation}\label{NLEstTor}
\Vert u_1u_2u_3\Vert_{L^2_{x,t}(\mathbb{T}^3\times I)}\lesssim \left(\frac{N_3}{N_1}+\frac{1}{N_2}\right)^\delta\Vert u_1\Vert_{Y^0(I)}\Vert u_2\Vert_{Z^\prime(I)}\Vert u_3\Vert_{Z^\prime(I)}
\end{equation}
and, with $p_0=4+1/10$ as in \eqref{Alex19},  
\begin{equation}\label{Al1}
\Vert u_1u_2u_3\Vert_{L^2_{x,t}(\mathbb{T}^3\times I)}\lesssim N_1^{1/2-5/p_0}N_2^{1/2-5/p_0}N_3^{10/p_0-2}\Vert u_1\Vert_{Z(I)}\Vert u_2\Vert_{Z(I)}\Vert u_3\Vert_{Z(I)}.
\end{equation}
\end{lemma}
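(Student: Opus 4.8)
The plan is to prove both estimates by reducing to the bilinear $L^2$-type Strichartz estimates encoded in Corollary \ref{alex1}, after decomposing the highest-frequency factors into cubes of size comparable to the smaller frequencies. For the second estimate \eqref{Al1}, which is the cleaner one, I would simply apply H\"older in the form $\|u_1u_2u_3\|_{L^2_{x,t}}\le \|u_1\|_{L^{p_0}_{x,t}}\|u_2\|_{L^{p_0}_{x,t}}\|u_3\|_{L^{q}_{x,t}}$ with $2/p_0\cdot 2 + 1/q = 1/2$... actually more economically: estimate $\|u_1\|_{L^{p_0}}$, $\|u_2\|_{L^{p_0}}$ by Theorem \ref{Stric}/Corollary \ref{alex1} (each gaining $N_i^{3/2-5/p_0}$, but after summing over cube decompositions one only gets $N_i^{1/2-5/p_0}$ since $u_i$ lives at a single dyadic block so it is a sum of $\sim (N_i/N_3)^3$ cubes of size $N_3$... hmm), and bound $u_3$ in $L^\infty_t L^2_x$ times nothing — rather, I would put $u_1,u_2$ in $L^{p_0}_{x,t}$ and $u_3$ in $L^{r}_{x,t}$ with $1/r = 1/2-2/p_0$, use Bernstein to trade the $L^r$ norm of $u_3$ for $N_3^{3(1/2-1/r)}\|u_3\|_{L^2_{x,t}}\lesssim N_3^{3/p_0-?}$ and then recognize all three $L^2_{x,t}$–or–$Z$ pieces; the exponents $N_1^{1/2-5/p_0}N_2^{1/2-5/p_0}N_3^{10/p_0-2}$ are exactly what this bookkeeping produces since $Z(I)$ is built from $N^{5-p/2}\|P_Nu\|_{L^p_{x,t}}^p$, i.e. $\|P_Nu\|_{L^{p_0}_{x,t}}\lesssim N^{1/2-5/p_0}\|u\|_{Z(I)}$. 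So \eqref{Al1} is essentially just the definition of $Z$ plus H\"older, and I expect it to be routine.

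The first estimate \eqref{NLEstTor} is the substantive one. Here I would exploit the frequency gap to gain the factor $(N_3/N_1 + 1/N_2)^\delta$. The idea: tile the $N_1$-block of frequency space by cubes $C$ of side length $M:=\max(N_3, 1)$ (or side $N_2$, depending on which ratio is active), so $u_1 = \sum_C P_C u_1$. Because $u_2$ is supported on a ball of radius $2N_2$ and $u_3$ on a ball of radius $2N_3$, for the product $P_C u_1 \cdot u_2 \cdot u_3$ to be nonzero in Fourier, $C$ must lie within distance $O(N_2)$ of $-\operatorname{supp}(\widehat{u_2 u_3})$, hence of a fixed ball; but more usefully, almost-orthogonality in the output frequency lets me sum $\|P_C u_1\cdot u_2\cdot u_3\|_{L^2_{x,t}}^2$ over $C$. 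For each fixed $C$, write $\|P_C u_1 \cdot u_2 u_3\|_{L^2_{x,t}} \le \|P_C u_1\cdot u_3\|_{L^2_{x,t}} \cdot \|u_2\|_{L^\infty_{x,t}}$ — no, rather apply the bilinear bound: $\|(P_C u_1)(u_3)\|_{L^2_{x,t}} \lesssim (\text{gain}) \|P_C u_1\|_{U^2_\Delta}\|u_3\|_{U^2_\Delta}$-type inequality, the transversality between a cube of size $N_3$ at frequency $\sim N_1$ and the ball of size $N_3$ giving a gain of $(N_3/N_1)^{1/2}$ (this is the standard bilinear Strichartz improvement: when the two frequency supports are separated by $\sim N_1$ and both have size $\sim N_3$, one gains $(N_3/N_1)^{1/2}$ over the trivial bound). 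Then put the third factor $u_2$ into a Strichartz norm $L^{p}_{x,t}$ with $p$ slightly above $4$, and pass from $U^2_\Delta$ to $Y^0$, and from the mixed $Z^\prime$ quantities to $Z^{1/2}(X^1)^{1/2}$ via interpolation. Summing the almost-orthogonal pieces in $C$ recovers $\|u_1\|_{Y^0}$.

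The main obstacle is the bookkeeping that simultaneously (a) produces the symmetric gain $(N_3/N_1 + 1/N_2)^\delta$ rather than just one of the two terms, and (b) keeps the estimate \emph{linear} in the strongest norm $\|u_1\|_{Y^0}$ while only quadratic-in-$Z^\prime$ (hence "half" in $X^1$) for the other two — this is the whole point of introducing $Z^\prime$ in \eqref{Zprime}, and getting the powers to balance forces a careful choice of the Strichartz exponents and an interpolation between the endpoint bilinear estimate and a lossy one. I expect to handle (a) by a two-case analysis ($N_2 \le \sqrt{N_1 N_3}$ versus the reverse, say) so that in each regime a single ratio dominates and the decomposition is into cubes of the appropriate size; the $1/N_2$ term arises from an $L^4$-type Galilean-invariant bilinear estimate of $u_2 u_3$ where the gain comes from $N_2$ being the smaller of the two frequencies \emph{after} $u_1$ has been removed by pairing. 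Converting $U^2_\Delta$ control to $Y^0$ control uses the embedding $V^2 \hookrightarrow U^p$ for $p>2$ together with the atomic structure recalled in \cite{HeTaTz}, at the cost of an $\varepsilon$-loss in the power that is absorbed into $\delta$.
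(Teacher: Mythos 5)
Your plan for \eqref{NLEstTor} has a genuine gap at its central step: you propose to produce the gain by a ``standard bilinear Strichartz improvement'' of the form $(N_3/N_1)^{1/2}$ between a size-$N_3$ cube at frequency $\sim N_1$ and the low-frequency factor. That clean square-root gain is a Euclidean phenomenon; on $\mathbb{T}^3$ no such estimate is available (periodic bi/trilinear estimates carry only an $\epsilon$-gain, require the extra $1/N_2$-type term, and their proofs rest on Bourgain-style counting, not transversality). In other words, you are assuming, without proof and in a form that is false on the torus, essentially the hardest ingredient. The paper does not reprove any such bound: it quotes \cite[Proposition 3.5]{HeTaTz}, which already gives $\Vert u_1u_2u_3\Vert_{L^2}\lesssim N_2N_3(N_3/N_1+1/N_2)^{\delta_0}\Vert u_1\Vert_{Y^0}\Vert u_2\Vert_{Y^0}\Vert u_3\Vert_{Y^0}$, and then \emph{interpolates} this with a second, no-gain estimate $\Vert u_1u_2u_3\Vert_{L^2}\lesssim\Vert u_1\Vert_{Y^0}\Vert u_2\Vert_{Z}\Vert u_3\Vert_{Z}$. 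Since $\Vert u\Vert_{Z'}=\Vert u\Vert_Z^{1/2}\Vert u\Vert_{X^1}^{1/2}$ and $N_i\Vert u_i\Vert_{Y^0}\lesssim\Vert u_i\Vert_{X^1}$ for $P_{N_i}u_i=u_i$, the geometric mean of the two bounds is exactly \eqref{NLEstTor} with $\delta=\delta_0/2$. This interpolation structure --- which is the whole reason $Z'$ is defined as it is --- is absent from your proposal, and without it (or a full reproof of the HTT trilinear estimate) your argument does not close.

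The no-gain ingredient and \eqref{Al1} are where your instincts are closest to the paper: decompose $u_1$ into cubes $C_k$ of size $N_2$, use almost orthogonality of $(P_{C_k}u_1)u_2u_3$ in $L^2_x$, apply H\"older with exponents $(p_0,p_0,2p_0/(p_0-4))$, bound $\Vert P_{C_k}u_1\Vert_{L^{p_0}}$ by \eqref{UpEst} together with $Y^0\hookrightarrow U^{p_0}_\Delta$ and square-summability over cubes. But your handling of the third factor is off: Bernstein from $L^2$ (or from $L^{p_0}$) does not produce the exponent $N_3^{10/p_0-2}$, and $\Vert u_3\Vert_{L^{2p_0/(p_0-4)}}$ is not controlled this way. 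The correct mechanism is interpolation between the $p=p_0$ and $p=p_1=100$ components of the $Z$-norm (note $2p_0/(p_0-4)=82<p_1$, and $1/2-5/p$ is affine in $1/p$, so one gets exactly $N_3^{1/2-5/82}=N_3^{10/p_0-2}$); this is precisely why $Z$ carries the second exponent $p_1$. With that correction, \eqref{Al1} is indeed just H\"older plus the definition of $Z$, as the paper asserts.
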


\begin{proof}[Proof of Lemma \ref{alex2}] Inequality \eqref{NLEstTor} follows from interpolation beween the following estimate
\begin{equation*}
\Vert u_1u_2u_3\Vert_{L^2_{x,t}}\lesssim N_2N_3\left(\frac{N_3}{N_1}+\frac{1}{N_2}\right)^{\delta_0}\Vert u_1\Vert_{Y^0}\Vert u_2\Vert_{Y^0}\Vert u_3\Vert_{Y^0}
\end{equation*}
from \cite[Proposition 3.5]{HeTaTz}
and the estimate
\begin{equation*}
\Vert u_1u_2u_3\Vert_{L^2_{x,t}}\lesssim \Vert u_1\Vert_{Y^0}\Vert u_2\Vert_{Z}\Vert u_3\Vert_{Z}.
\end{equation*}
To prove this second estimate we observe that if $\{C_k\}_{k\in\mathbb{Z}}$ is a partition of $\Z^3$ in cubes of size $N_2$ then the functions $\left(P_{C_k}u_1\right)u_2u_3$ are almost orthogonal in $L^2_{x}$. Using \eqref{UpEst},
\begin{equation*}
\begin{split}
\Vert u_1u_2u_3\Vert_{L^2_{x,t}(\mathbb{T}^3\times I)}^2&\lesssim\sum_k\Vert \left(P_{C_k}u_1\right)u_2u_3\Vert_{L^2_{x,t}}^2\\
&\lesssim \sum_k\Vert P_{C_k}u_1\Vert_{L^{p_0}_{x,t}}^2\Vert u_2\Vert_{L^{p_0}_{x,t}}^2\Vert u_3\Vert_{L^{2p_0/(p_0-4)}_{x,t}}^2\\
&\lesssim \Vert u_2\Vert_{Z(I)}^2\Vert u_3\Vert_{Z(I)}^2\sum_k\Vert P_{C_k}u_1\Vert_{U^{p_0}_\Delta(I,L^2)}^2.
\end{split}
\end{equation*}
Using that $Y^0(I)\hookrightarrow U^{p_0}_\Delta(I,L^2)$ and remarking that the $Y^0$-norm is square-summable finishes the proof.

The bound \eqref{Al1} follows by a similar argument directly from the definition \eqref{Alex19}. 
\end{proof}

From this, using Proposition \ref{Alex3} and arguing as in \cite[Lemma 3.2]{IoPa}, one obtains the following lemma which essentially appears in \cite[Proposition 4.1]{HeTaTz}:

\begin{lemma}\label{NLEst2}
For $u_k\in X^1(I)$, $k=1\dots 5$, $|I|\leq 1$, the estimate
\begin{equation*}
\Vert \Pi_{i=1}^5\tilde{u}_k\Vert_{N(I)}\lesssim \sum_{\sigma\in\mathfrak{S}_5}\Vert u_{\sigma(1)}\Vert_{X^1(I)}\Pi_{j\ge 2}\Vert u_{\sigma(j)}\Vert_{Z^\prime(I)}
\end{equation*}
holds true, where $\tilde{u}_k\in\{u_k,\overline{u_k}\}$.
\end{lemma}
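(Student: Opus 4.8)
The plan is to reduce the five-linear estimate to the trilinear building block \eqref{NLEstTor} via Littlewood--Paley decomposition and the duality characterization \eqref{EstimNnorm}. First I would write $\tilde u_k=\sum_{N_k}P_{N_k}\tilde u_k$ and expand the product $\Pi_{i=1}^5\tilde u_k$ into a sum over frequency quintuples $(N_1,\dots,N_5)$, which after relabeling we may assume are ordered $N_1\ge N_2\ge\dots\ge N_5$ (the symmetrization is what produces the sum over $\sigma\in\mathfrak S_5$; up to a constant one also sums over which original factor carries which frequency). By Proposition \ref{Alex3}, it suffices to pair the dyadic piece against an arbitrary $v$ with $\|v\|_{Y^{-1}(I)}\le 1$, i.e. to bound
\begin{equation*}
\sum_{N_1\ge\dots\ge N_5}\Big|\int_{\mathbb T^3\times I}(P_{N_1}\tilde u_1)(P_{N_2}\tilde u_2)(P_{N_3}\tilde u_3)(P_{N_4}\tilde u_4)(P_{N_5}\tilde u_5)\,\overline v\,dx\,dt\Big|.
\end{equation*}
By frequency support considerations the integral vanishes unless the output frequency of the product of the three highest pieces is $\lesssim N_1$; so localizing $v$ to frequencies $\lesssim N_1$ and writing $P_{\lesssim N_1}v$, I would apply Cauchy--Schwarz in $(x,t)$ to split the six-fold integral as $\|(P_{N_1}\tilde u_1)(P_{N_2}\tilde u_2)(P_{N_3}\tilde u_3)\|_{L^2_{x,t}}\cdot\|(P_{N_4}\tilde u_4)(P_{N_5}\tilde u_5)\,\overline{P_{\lesssim N_1}v}\|_{L^2_{x,t}}$.

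Next I would estimate each factor. For the first factor I invoke \eqref{NLEstTor} directly, getting
\begin{equation*}
\Big(\tfrac{N_3}{N_1}+\tfrac1{N_2}\Big)^\delta\|P_{N_1}\tilde u_1\|_{Y^0(I)}\|P_{N_2}\tilde u_2\|_{Z'(I)}\|P_{N_3}\tilde u_3\|_{Z'(I)},
\end{equation*}
and since $\|P_{N_1}\tilde u_1\|_{Y^0}\lesssim \|P_{N_1}u_1\|_{Y^1}\lesssim\|P_{N_1}u_1\|_{X^1}$, this is the factor that will carry the $X^1$ norm of $u_{\sigma(1)}$ and the $Z'$ norms of $u_{\sigma(2)},u_{\sigma(3)}$. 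For the second factor I would again use \eqref{NLEstTor} (with $v$ playing the role of the lowest-regularity factor): since the frequencies $N_4\ge N_5$ and $P_{\lesssim N_1}v$ has frequency $\lesssim N_1$, the three relevant frequencies are $N_1\gtrsim N_4\ge N_5$, so \eqref{NLEstTor} gives
\begin{equation*}
\big\|(P_{N_4}\tilde u_4)(P_{N_5}\tilde u_5)\,\overline{P_{\lesssim N_1}v}\big\|_{L^2_{x,t}}\lesssim\Big(\tfrac{N_5}{N_1}+\tfrac1{N_4}\Big)^\delta\|P_{\lesssim N_1}v\|_{Y^0(I)}\|P_{N_4}\tilde u_4\|_{Z'(I)}\|P_{N_5}\tilde u_5\|_{Z'(I)},
\end{equation*}
and $\|P_{\lesssim N_1}v\|_{Y^0}\lesssim N_1\|v\|_{Y^{-1}}\le N_1$. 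Multiplying the two factors, the dangerous growth $N_1$ from $v$ is absorbed: the product of gain factors contributes at worst $N_1^{-\delta}\cdot(\text{something})$, but more to the point one extracts a genuine negative power of $\max(N_1,N_2)$, because $(N_3/N_1+1/N_2)^\delta\le (1/N_2)^{\delta/2}(\dots)$ type manipulations, together with the weights $\langle N\rangle^{\pm1}$ hidden in passing between $X^1,Z'$ and their unweighted versions, yield a factor that is summable in the dyadic parameters after distributing powers. Concretely, I would bookkeep so that each of $N_1,\dots,N_5$ appears with the correct weight to reconstruct $\|u_{\sigma(1)}\|_{X^1}\Pi_{j\ge2}\|u_{\sigma(j)}\|_{Z'}$ and is left with a strictly negative dyadic exponent (coming from the two $\delta$-gains and the fact that $N_1\sim N_2$ on the support, so $N_1$ from $v$ is matched by $N_2^{-1}$-type gains), then sum the geometric series.

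The main obstacle is the clean extraction of summability: naively the factor $\|P_{\lesssim N_1}v\|_{Y^0}\lesssim N_1$ threatens to destroy convergence, and one must use that on the frequency support $N_1$ and $N_2$ are comparable (the top two frequencies of a product of five must be within a constant unless there is cancellation), so that $N_1\cdot(N_3/N_1+1/N_2)^\delta(N_5/N_1+1/N_4)^\delta$ still carries a net negative power of $N_1$ together with the correct weights — this is exactly the point where the $Z'=Z^{1/2}(X^1)^{1/2}$ interpolation structure and the slack $\delta>0$ in \eqref{NLEstTor} are used, and it is the step that genuinely "essentially appears in \cite[Proposition 4.1]{HeTaTz}'' and in \cite[Lemma 3.2]{IoPa}. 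Everything else — the high-low frequency analysis, the Cauchy--Schwarz split, passing between weighted and unweighted norms, and the final geometric summation — is routine once this bookkeeping is set up. I would therefore follow the argument of \cite[Lemma 3.2]{IoPa} essentially verbatim, substituting \eqref{NLEstTor} for its analogue there.
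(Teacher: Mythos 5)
Your skeleton (duality via \eqref{EstimNnorm}, dyadic decomposition and symmetrization so that the distinguished factor carries the highest frequency --- i.e.\ the reduction to \eqref{NLEst3} --- followed by a Cauchy--Schwarz split into two trilinear $L^2$ factors estimated through Lemma \ref{alex2}) is the same route the paper takes, which it in turn delegates to \cite[Lemma 3.2]{IoPa} and \cite[Proposition 4.1]{HeTaTz}. However, the two steps you actually carry out contain a genuine quantitative gap. After paying $\|P_{\lesssim N_1}v\|_{Y^0}\lesssim N_1\|v\|_{Y^{-1}}$ you must recover a full factor $N_1^{-1}$, and the only place it can come from is the top-frequency factor itself: $\|P_{N_1}\tilde u_1\|_{Y^0}\simeq N_1^{-1}\|P_{N_1}u_1\|_{Y^1}\lesssim N_1^{-1}\|P_{N_1}u_1\|_{X^1}$. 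Your displayed chain $\|P_{N_1}\tilde u_1\|_{Y^0}\lesssim\|P_{N_1}u_1\|_{Y^1}\lesssim\|P_{N_1}u_1\|_{X^1}$ discards exactly this factor, and the substitutes you invoke do not work: the gains $(N_3/N_1+1/N_2)^\delta$ and $(N_5/N_1+1/N_4)^\delta$ are only $\delta$-th powers with $\delta$ small and cannot absorb a full power of $N_1$, and the claim that $N_1\sim N_2$ on the frequency support is false once the dual function is taken into account --- the convolution constraint says that the two largest of the six frequencies $N_0,N_1,\dots,N_5$ (with $N_0$ the frequency of $v$) are comparable, so in the main regime $N_0\sim N_1$ the frequency $N_2$ may be arbitrarily small. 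With the $N_0/N_1\lesssim 1$ bookkeeping restored, the $\delta$-gains are needed only for the dyadic summation, which is then routine.

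A second, related gap is the application of \eqref{NLEstTor} to $\|(P_{N_4}\tilde u_4)(P_{N_5}\tilde u_5)\overline{P_{\lesssim N_1}v}\|_{L^2_{x,t}}$. That estimate is stated for single dyadic blocks and places the \emph{highest-frequency} factor in the $Y^0$ slot and the other two in $Z'$; since $v$ is controlled only in $Y^{-1}$, it can never occupy a $Z'$ slot, so after writing $v=\sum_{N_0\lesssim N_1}P_{N_0}v$ your bound is available only when $N_0\gtrsim N_4$. The regime $N_0\ll N_4$ (in particular $N_0$ below all five $u$-frequencies, which does occur) requires a different grouping and/or the second trilinear bound \eqref{Al1}; this case analysis is precisely the content of \cite[Lemma 3.2]{IoPa} to which both you and the paper appeal, so deferring to it is legitimate, but your write-up presents the two displayed applications of \eqref{NLEstTor} as covering all frequency configurations, which they do not.
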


This follows from the more precise estimate
\begin{equation}\label{NLEst3}
\Vert \sum_{B\ge 1}P_{B}\tilde{u}_1\Pi_{j=2}^5P_{\le DB}\tilde{u}_j\Vert_{N(I)}\lesssim_D \Vert u_1\Vert_{X^1(I)}\Pi_{j=2}^5\Vert u_j\Vert_{Z^\prime(I)},
\end{equation}
which is proved similarly as in \cite[Lemma 3.2]{IoPa}. This implies the following:

\begin{proposition}[Local well-posedness]\label{LWP}
(i) Given $E>0$, there exists $\delta_0=\delta_0(E)>0$ such that if $\Vert \phi\Vert_{H^1(\mathbb{T}^3)}\le E$ and
\begin{equation*}
\Vert e^{it\Delta}\phi\Vert_{Z(I)}\le\delta_0
\end{equation*}
on some interval $I\ni 0$, $|I|\leq 1$, then there exists a unique solution $u\in X^1(I)$ of \eqref{eq1} satisfying $u(0)=\phi$. Besides
\begin{equation*}
\Vert u-e^{it\Delta}\phi\Vert_{X^1(I)}\lesssim_E \Vert e^{it\Delta}\phi\Vert_{Z(I)}^{3/2}.
\end{equation*}
The quantities $E(u)$ and $M(u)$ defined in \eqref{conserve} are conserved on $I$. 

(ii) If $u\in X^1(I)$ is a solution of \eqref{eq1.1} on some open interval $I$ and 
\begin{equation*}
\Vert u\Vert_{Z(I)}<+\infty
\end{equation*}
then $u$ can be extended as a nonlinear solution to a neighborhood of $\overline{I}$ and
\begin{equation*}
\Vert u\Vert_{X^1(I)}\le C(E(u),\Vert u\Vert_{Z(I)})
\end{equation*}
for some constant $C$ depending on $E(u)$ and $\Vert u\Vert_{Z(I)}$.
\end{proposition}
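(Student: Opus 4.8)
The plan is to derive both parts from the fixed-point argument generated by the multilinear estimate \eqref{NLEst3} (equivalently Lemma \ref{NLEst2}) combined with the Duhamel formulation and the embedding $X^1(I)\hookrightarrow L^\infty(I,H^1)$. For part (i), I would set up the contraction on the metric space
\begin{equation*}
B:=\{u\in X^1(I):\ \Vert u-e^{it\Delta}\phi\Vert_{X^1(I)}\le A\delta_0^{3/2}\},
\end{equation*}
with $A=A(E)$ to be chosen, equipped with the $X^1(I)$-distance, and consider the map $\Phi(u)(t):=e^{it\Delta}\phi-i\int_0^te^{i(t-s)\Delta}F(u(s))\,ds$. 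The key point is that the Strichartz consequence $\Vert v\Vert_{Z(I)}\lesssim\Vert v\Vert_{X^1(I)}$ and the definition \eqref{Zprime} of $Z'$ give, for $u\in B$,
\begin{equation*}
\Vert u\Vert_{Z(I)}\le\Vert e^{it\Delta}\phi\Vert_{Z(I)}+C\Vert u-e^{it\Delta}\phi\Vert_{X^1(I)}\lesssim_E\delta_0,
\end{equation*}
hence $\Vert u\Vert_{Z'(I)}\lesssim_E(\delta_0\cdot E)^{1/2}\lesssim_E\delta_0^{1/2}$ (using also $\Vert u\Vert_{X^1(I)}\lesssim_E E$ on $B$), while $\Vert u\Vert_{X^1(I)}\lesssim_E E$. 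Feeding this into Lemma \ref{NLEst2} with all five factors equal to $u$ yields $\Vert F(u)\Vert_{N(I)}\lesssim_E\Vert u\Vert_{X^1(I)}\Vert u\Vert_{Z'(I)}^4\lesssim_E E\cdot\delta_0^2$, and, recalling the definition \eqref{NNorm} of $N(I)$, $\Vert\Phi(u)-e^{it\Delta}\phi\Vert_{X^1(I)}=\Vert F(u)\Vert_{N(I)}\lesssim_E\delta_0^2\le A\delta_0^{3/2}$ once $\delta_0=\delta_0(E)$ is small. The difference estimate is identical: $F(u)-F(v)$ is a sum of quintilinear terms each containing one factor $u-v$ and four factors among $u,v\in B$, so $\Vert\Phi(u)-\Phi(v)\Vert_{X^1(I)}\lesssim_E\delta_0^{1/2}\Vert u-v\Vert_{X^1(I)}$, a contraction for $\delta_0$ small. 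This gives existence and uniqueness in $B$; uniqueness in all of $X^1(I)$ follows by a standard continuity/bootstrap argument subdividing $I$ into finitely many subintervals on which the $Z$-norm of the difference is small.

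For the conservation of $M(u)$ and $E(u)$: since $u\in X^1(I)\hookrightarrow C(I:H^1)$, one may approximate $\phi$ by smooth data, use persistence of regularity (the same fixed-point run at higher regularity, or the a priori bound \eqref{EstimX1Norm}) to get smooth solutions for which the conservation laws follow from direct integration by parts, and then pass to the limit using the continuity of the flow in $X^1$, which is part of what the contraction delivers. For part (ii), the hypothesis $\Vert u\Vert_{Z(I)}<\infty$ lets me partition $I$ into $J_1,\dots,J_m$ with $m=m(E(u),\Vert u\Vert_{Z(I)})$ subintervals on each of which $\Vert u\Vert_{Z(J_\ell)}$ is smaller than a threshold $\epsilon(E(u))$; on each $J_\ell$ the bound $\Vert u\Vert_{X^1(J_\ell)}\le C(E(u))$ follows by running the estimate $\Vert u\Vert_{X^1(J_\ell)}\le\Vert u(t_\ell)\Vert_{H^1}+\Vert F(u)\Vert_{N(J_\ell)}\lesssim_{E(u)}1+\Vert u\Vert_{X^1(J_\ell)}\Vert u\Vert_{Z'(J_\ell)}^4$ and absorbing, using $\Vert u\Vert_{Z'(J_\ell)}^4\le\Vert u\Vert_{Z(J_\ell)}^2\Vert u\Vert_{X^1(J_\ell)}^2$, so one actually gets $\Vert u\Vert_{X^1(J_\ell)}^2\lesssim_{E(u)}1+\epsilon(E(u))^2\Vert u\Vert_{X^1(J_\ell)}^4$, which bootstraps on the (small) initial interval and then propagates; summing the $m$ bounds gives $\Vert u\Vert_{X^1(I)}\le C(E(u),\Vert u\Vert_{Z(I)})$. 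Finiteness of the $X^1$ norm up to the endpoints then allows one more application of part (i) centered at each endpoint of $\overline I$ to extend the solution.

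The main obstacle I anticipate is bookkeeping the frequency-localized version: the clean estimate one really has is \eqref{NLEst3}, where the top frequency sits on the factor measured in $X^1$ and the other four are frequency-restricted to $\le DB$; to deduce the symmetric Lemma \ref{NLEst2} and hence close the contraction one must handle the genuinely high-high-to-low interactions, where none of the four "lower" factors dominates, via the gain in \eqref{NLEstTor} (the $(N_3/N_1+1/N_2)^\delta$ factor) after dyadic decomposition and summation — this is exactly the step flagged as being "proved similarly as in \cite[Lemma 3.2]{IoPa}", and it is where the $\delta>0$ of Lemma \ref{alex2} is essential for summability. A secondary technical point is that $Z'$ is not a norm coming from a Banach space in an obvious way (it is a geometric interpolant), so the "difference" estimates must be phrased through $Z$ and $X^1$ separately rather than by naively subtracting $Z'$-norms; once one consistently bounds $\Vert u\Vert_{Z'(I)}\le\Vert u\Vert_{Z(I)}^{1/2}\Vert u\Vert_{X^1(I)}^{1/2}$ and controls each factor on $B$, this is routine.
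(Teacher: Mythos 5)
Your argument is exactly the intended one: the paper omits the proof of Proposition \ref{LWP}, stating that it follows from Lemma \ref{NLEst2}/\eqref{NLEst3} as in \cite[Section 3]{IoPa}, and your contraction around $e^{it\Delta}\phi$ using $\Vert u\Vert_{Z'(I)}\le\Vert u\Vert_{Z(I)}^{1/2}\Vert u\Vert_{X^1(I)}^{1/2}$, together with the fungibility/subdivision argument for part (ii), is precisely that standard route. The only cosmetic adjustment is to run the fixed point with the actual value $\delta:=\Vert e^{it\Delta}\phi\Vert_{Z(I)}\le\delta_0$ as the ball radius parameter so that the conclusion reads $\lesssim_E\delta^{3/2}$ rather than $\delta_0^{3/2}$.
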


The main result in this section is the following:

\begin{proposition}[Stability]\label{Stabprop}
Assume $I$ is an open bounded interval, $\rho\in[-1,1]$, and $\widetilde{u}\in X^1(I)$ satisfies the approximate  Schr\"{o}dinger equation
\begin{equation}\label{ANLS}
(i\partial_t+\Delta)\widetilde{u}=\rho\widetilde{u}|\widetilde{u}|^4+e\quad\text{ on }\mathbb{T}^3\times I.
\end{equation}
Assume in addition that
\begin{equation}\label{ume}
\|\widetilde{u}\|_{Z(I)}+\|\widetilde{u}\|_{L^\infty_t(I,H^1(\mathbb{T}^3))}\leq M,
\end{equation}
for some $M\in[1,\infty)$. Assume $t_0 \in I$ and $u_0\in H^1(\mathbb{T}^3)$ is such that the smallness condition
\begin{equation}\label{safetycheck}
\|u_0 - \widetilde{u}(t_0)\|_{H^1(\mathbb{T}^3)}+\| e\|_{N(I)}\leq \eps
\end{equation}
holds for some $0 < \eps < \eps_1$, where $\eps_1\leq 1$ is a small constant $\eps_1 = \eps_1(M) > 0$.

Then there exists a strong solution $u\in X^1(I)$ of the Schr\"{o}dinger equation
\begin{equation}\label{ANLS2}
(i\partial_t+\Delta)u=\rho u|u|^4
\end{equation}
 such that $u(t_0)=u_0$ and
\begin{equation}\label{output}
\begin{split}
\| u \|_{X^1(I)}+\|\widetilde{u}\|_{X^1(I)}&\leq C(M),\\
\| u - \widetilde u \|_{X^1(I)}&\leq C(M)\eps.
 \end{split}
\end{equation}
\end{proposition}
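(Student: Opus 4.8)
The plan is to run the standard bootstrap/continuity argument used for stability theory of critical dispersive equations, adapted to the $X^1$/$Z$ framework. First I would reduce to the case $\rho = 1$ (the case $\rho = 0$ being the linear equation and handled directly, and the general $\rho \in [-1,1]$ being no different since the nonlinear estimates in Lemma \ref{NLEst2} are insensitive to the constant $\rho$). Next, by a standard subdivision argument, I would partition $I$ into finitely many consecutive subintervals $I_1, \dots, I_m$, with $m = m(M)$ bounded in terms of $M$, such that on each $I_j$ one has $\|\widetilde u\|_{Z(I_j)} \leq \eta$ for a small threshold $\eta = \eta(M)$ to be chosen; this is possible because $\|\widetilde u\|_{Z(I)} \leq M$ and the $Z$-norm is built from $\ell^p$-type sums over a partition of the time interval, hence is fungible (sub-additive in the appropriate sense over disjoint subintervals), exactly as remarked after \eqref{Alex19}. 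The errors will be allowed to accumulate geometrically across the $I_j$, so the threshold $\eps_1(M)$ will be taken small compared to $\eta$ and to (a power of) a fixed constant raised to the power $m(M)$.

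The heart of the matter is the estimate on a single subinterval $J = I_j$ where $\|\widetilde u\|_{Z(J)} \leq \eta$. Writing $w = u - \widetilde u$, the difference solves
\begin{equation*}
(i\partial_t + \Delta) w = F(\widetilde u + w) - F(\widetilde u) - e, \qquad w(t_j) = u(t_j) - \widetilde u(t_j),
\end{equation*}
where $F(z) = z|z|^4$ and, expanding, $F(\widetilde u + w) - F(\widetilde u)$ is a sum of quintilinear terms each of which is $\mathfrak{O}$ in $\widetilde u$ and $w$ with at least one factor of $w$. Applying Proposition \ref{Alex3} and Lemma \ref{NLEst2} (together with the fact that $Z'(J) = Z(J)^{1/2} X^1(J)^{1/2}$ and the embedding $Z(J) \lesssim X^1(J)$), one gets on $J$
\begin{equation*}
\|w\|_{X^1(J)} \lesssim \|w(t_j)\|_{H^1} + \|e\|_{N(J)} + C(M)\big( \|\widetilde u\|_{Z'(J)} + \|w\|_{X^1(J)} \big)^4 \|w\|_{X^1(J)}.
\end{equation*}
To close this I first need an a priori bound on $\|\widetilde u\|_{X^1(J)}$ in terms of $M$: this comes from Proposition \ref{LWP}(ii) applied to $\widetilde u$ viewed as an exact-solution-plus-error (or more directly by feeding \eqref{ANLS} into Proposition \ref{Alex3}, bounding the nonlinear term via Lemma \ref{NLEst2} and the error via \eqref{ume}–\eqref{safetycheck}), so that $\|\widetilde u\|_{Z'(J)} \leq C(M)\eta^{1/2}$ is small once $\eta$ is chosen small depending on $M$. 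Then a continuity-in-$J$ bootstrap — starting from $\|w\|_{X^1(J')}$ small on short initial subintervals and enlarging $J'$ — forces $\|w\|_{X^1(J)} \leq 2\big(\|w(t_j)\|_{H^1} + \|e\|_{N(J)}\big)$, provided the right-hand side is itself small enough (which is where $\eps_1(M)$ enters). Iterating over $j = 1, \dots, m$, and noting $\|w(t_{j+1})\|_{H^1} \lesssim \|w\|_{X^1(I_j)}$ by the embedding $X^1 \hookrightarrow L^\infty_t H^1$, the errors compound at most geometrically, yielding $\|w\|_{X^1(I)} \leq C(M)\eps$ and, combined with the bound on $\widetilde u$, the full conclusion \eqref{output}. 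Existence of the exact solution $u$ on all of $I$ follows along the way from Proposition \ref{LWP}(i)–(ii): $u$ is constructed locally and the uniform bound just derived prevents blow-up of its $X^1$-norm, so it extends to $I$.

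The main obstacle I expect is \emph{bookkeeping the error accumulation across the subdivision} while keeping every constant in terms of $M$ alone: one must choose, in the right order, first $\eta = \eta(M)$ small so that the quintilinear term is a genuine perturbation on each piece, then $m = m(M)$ the resulting number of subintervals, and finally $\eps_1 = \eps_1(M)$ small enough that $C(M)^m \eps_1$ is still $\leq \eta$ (so that the smallness hypothesis propagates from $I_j$ to $I_{j+1}$ rather than degrading). A secondary technical point is that the $Z$-norm subdivision is not perfectly additive — $\|\cdot\|_{Z(I)}^p$ controls $\sum_j \|\cdot\|_{Z(I_j)}^p$ only up to the $\sup$ over length-$\leq 1$ subintervals in its definition — so the subdivision into pieces of $Z$-norm $\leq \eta$ must be done with a little care (e.g. choosing breakpoints greedily), but this is routine. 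Apart from that, all the analytic inputs (the bilinear/nonlinear estimates, the $U^p$–$V^p$ machinery, the local theory) are quoted from the results already stated above, so the argument is essentially a soft perturbation/continuity scheme.
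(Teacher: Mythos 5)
Your proposal is correct and follows essentially the same route as the paper, which omits the proof of Proposition \ref{Stabprop} and refers to the analogous argument in \cite[Section 3]{IoPa}: an a priori $X^1$ bound for $\widetilde{u}$ from the $Z$-bound via the local theory, a subdivision of $I$ into $m(M)$ subintervals on which the (fungible) $Z$-norm of $\widetilde{u}$ is small, a perturbative bootstrap for $w=u-\widetilde{u}$ on each piece using Proposition \ref{Alex3} and Lemma \ref{NLEst2}, and geometric accumulation of errors controlled by choosing $\eta(M)$, then $m(M)$, then $\eps_1(M)$ in that order. No gaps beyond the routine points you already flag.
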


The proof of these proposition is very similar to the proof of the corresponding statements in \cite[Section 3]{IoPa} and is omitted.

%%%%%%%%%%%%%%%%%%%%%%%%%%%%%%%%%%%%%%%%%%%%%%%%%%%%%%%%%%%%%%%%%%%%%%%%%%%%%%%%%%%%%%%%%%%%%%%%%%%%%%%%%%%%%%%%%%%%%%%%%%%%%%%%%%%%%%%%%%%%%%%%%%%%%%%%%%%%%%%%%%%%%%%%%%%%%%%%%%%%%%%%%%%%%%%%%%%%%%%%%%%%%%%%%%%%%%%%%%%%%%%%%%%%%%%%%%%%%%%%%%%%%%%%%%%%%%%%%%%%%%%%%%%%%%%%%%%%%%%%%%%%%%%%

\section{Euclidean profiles}\label{Eucl}

In this section we prove precise estimates showing how to compare Euclidean and periodic solutions of both linear and nonlinear Schr\"{o}dinger equations. Such a comparison is meaningful only in the case of rescaled data that concentrate at a point, and then, only for short time (e.g. since the linear flow in $\mathbb{T}^3$ is periodic). We follow closely the arguments in \cite[Section 4]{IoPaSt}.

We fix a spherically-symmetric function $\eta\in C^\infty_0(\mathbb{R}^3)$ supported in the ball of radius $2$ and equal to $1$ in the ball of radius $1$. Given $\phi\in \dot{H}^1(\mathbb{R}^3)$ and a real number $N\geq 1$ we define
\begin{equation}\label{rescaled}
\begin{split}
Q_N\phi\in H^1(\mathbb{R}^3),\qquad &(Q_N\phi)(x)=\eta(x/N^{1/2})\phi(x),\\
\phi_N\in H^1(\mathbb{R}^3),\qquad &\phi_N(x)=N^\frac{1}{2}(Q_N\phi)(Nx),\\
f_{N}\in H^1(\T^3),\qquad &f_{N}(y)=\phi_N(\Psi^{-1}(y)),
\end{split}
\end{equation}
where $\Psi:\{x\in\mathbb{R}^3:|x|<1\}\to O_0\subseteq \T^3$, $\Psi(x)=x$. Thus $Q_N\phi$ is a compactly supported\footnote{This modification is useful to avoid the contribution of $\phi$ coming from the Euclidean infinity, in a uniform way depending on the scale $N$.} modification of the profile $\phi$, $\phi_N$ is an $\dot{H}^1$-invariant rescaling of $Q_N\phi$, and $f_{N}$ is the function obtained by transferring $\phi_N$ to a neighborhood of $0$ in $\T^3$. We define also
\begin{equation*}
E_{\mathbb{R}^3}(\phi)=\frac{1}{2}\int_{\mathbb{R}^3}|\nabla_{\R^3}\phi|^2\,dx+\frac{1}{6}\int_{\mathbb{R}^3}|\phi|^6\,dx.
\end{equation*}

We will use the main theorem of \cite{CKSTTcrit}, in the following form.

\begin{theorem}\label{MainThmEucl}
Assume $\psi\in\dot{H}^1(\mathbb{R}^3)$. Then there is a unique global solution $v\in C(\mathbb{R}:\dot{H}^1(\mathbb{R}^3))$ of the initial-value problem
\begin{equation}\label{clo3}
(i\partial_t+\Delta_{\R^3})v=v|v|^4,\qquad v(0)=\psi,
\end{equation}
and
\begin{equation}\label{clo4}
\|\,|\nabla_{\R^3} v|\,\|_{(L^\infty_tL^2_x\cap L^2_tL^6_x)(\mathbb{R}^3\times\mathbb{R})}\leq \widetilde{C}(E_{\mathbb{R}^3}(\psi)).
\end{equation}
Moreover this solution scatters in the sense that there exists $\psi^{\pm\infty}\in\dot{H}^1(\mathbb{R}^3)$ such that
\begin{equation}\label{EScat}
\Vert v(t)-e^{it\Delta_{\mathbb{R}^3}}\psi^{\pm\infty}\Vert_{\dot{H}^1(\mathbb{R}^3)}\to 0
\end{equation}
as $t\to\pm\infty$. Besides, if $\psi\in H^5(\mathbb{R}^3)$ then $v\in C(\mathbb{R}:H^5(\mathbb{R}^3))$ and
\begin{equation*}
\sup_{t\in\mathbb{R}}\|v(t)\|_{H^5(\mathbb{R}^3)}\lesssim_{\|\psi\|_{H^5(\mathbb{R}^3)}}1.
\end{equation*}
\end{theorem}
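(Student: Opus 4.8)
My plan is to derive the first three assertions directly from \cite{CKSTTcrit} and to obtain the persistence of $H^5$ regularity by the standard persistence-of-regularity scheme. First I would invoke the main theorem of \cite{CKSTTcrit} --- whose proof, based on induction on energy together with an interaction Morawetz inequality, I treat as a black box --- to get, for any $\psi\in\dot H^1(\R^3)$, a unique global solution $v\in C(\R:\dot H^1(\R^3))$ of \eqref{clo3} satisfying the a priori bound $\|v\|_{L^{10}_{x,t}(\R^3\times\R)}\le C(E_{\R^3}(\psi))$. From this I would recover \eqref{clo4}: the bound on $\nabla v$ in $L^\infty_tL^2_x$ is conservation of energy (the equation is defocusing, so $E_{\R^3}(\psi)$ dominates $\tfrac12\|\nabla v(t)\|_{L^2}^2$), while the bound in $L^2_tL^6_x$ --- in fact the finiteness, with a bound depending only on $E_{\R^3}(\psi)$, of the full $\dot S^1$ Strichartz norm of $v$ over $\R$ --- follows by splitting $\R$ into $K=K(E_{\R^3}(\psi))$ intervals on which $\|v\|_{L^{10}_{x,t}}$ is as small as needed, running the Strichartz inequality against the Duhamel formula on each piece (using that $F$ is a polynomial, see below), and summing. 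Scattering \eqref{EScat} then follows from $\|v\|_{\dot S^1(\R)}<\infty$ in the usual way: for $t_1<t_2$ one has
\begin{equation*}
\|e^{-it_2\Delta}v(t_2)-e^{-it_1\Delta}v(t_1)\|_{\dot H^1(\R^3)}\lesssim \big\|\,|\nabla|(v|v|^4)\,\big\|_{L^2_tL^{6/5}_x(\R^3\times(t_1,t_2))},
\end{equation*}
which tends to $0$ as $t_1,t_2\to+\infty$ (resp. $-\infty$) since $|\nabla|(v|v|^4)\in L^2_tL^{6/5}_x(\R^3\times\R)$; hence $e^{-it\Delta}v(t)$ converges in $\dot H^1$ and $\psi^{\pm\infty}:=\lim_{t\to\pm\infty}e^{-it\Delta}v(t)$ works.

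The only assertion not contained in \cite{CKSTTcrit} is the persistence of $H^5$ regularity. The structural point is that $F(z)=z|z|^4=z^3\overline{z}^2$ is a polynomial in $(z,\overline{z})$, so the nonlinearity is smooth and the fractional Leibniz rule applies to $\langle\nabla\rangle^5 F(v)$ with no loss. Given $\psi\in H^5(\R^3)$, the (easy, since $H^5$ is far above the critical index $s_c=1$) local Cauchy theory in $H^5$ produces a local $H^5$ solution which, by the critical-norm persistence criterion, extends as long as $\|v\|_{L^{10}_{x,t}}$ on the current interval is finite; since we already control $\|v\|_{L^{10}_{x,t}(\R^3\times\R)}$, this $H^5$ solution is global and agrees with $v$. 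To bound it, I would use a partition $\R=\bigcup_{j=1}^{K}I_j$, $I_j=[t_j,t_{j+1}]$, $K=K(E_{\R^3}(\psi))$, with $\|v\|_{L^{10}_{x,t}(I_j)}\le\eta$ for a small absolute constant $\eta$; writing $\|\cdot\|_{S^0(I)}$ for the supremum over $L^2$-admissible Strichartz pairs of $\|\cdot\|_{L^q_tL^r_x(\R^3\times I)}$, Strichartz applied to the Duhamel formula on $I_j$ gives
\begin{equation*}
\|\langle\nabla\rangle^5 v\|_{S^0(I_j)}\lesssim \|v(t_j)\|_{H^5}+\big\|\langle\nabla\rangle^5(v|v|^4)\big\|_{L^2_tL^{6/5}_x(I_j)}\lesssim \|v(t_j)\|_{H^5}+\|v\|_{L^{10}_{x,t}(I_j)}^4\,\|\langle\nabla\rangle^5 v\|_{S^0(I_j)},
\end{equation*}
the second step by fractional Leibniz and Hölder, placing $\langle\nabla\rangle^5 v$ in the admissible pair $L^{10}_tL^{30/13}_x$ and the four bare factors of $v$ in $L^{10}_{x,t}$ (the remaining Leibniz terms, with derivatives distributed over several factors, are estimated similarly by interpolation with the already-controlled $\dot S^1$ norm). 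Absorbing the last term for $\eta$ small enough yields $\|v(t_{j+1})\|_{H^5}\le\|\langle\nabla\rangle^5 v\|_{L^\infty_tL^2_x(I_j)}\lesssim\|v(t_j)\|_{H^5}$; iterating over $j=1,\dots,K$ gives $\sup_{t\in\R}\|v(t)\|_{H^5}\lesssim C(E_{\R^3}(\psi))^{K}\|\psi\|_{H^5}$, and $v\in C(\R:H^5(\R^3))$ is read off from the $S^0$ bound at regularity $5$.

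I do not expect a real obstacle here: the substantive analysis is entirely inside \cite{CKSTTcrit}. The one step that needs a little care is the last one --- upgrading the a priori $H^5$ estimate to a statement about a genuine global $H^5$ solution --- which requires combining the local $H^5$ theory's blow-up/persistence criterion with the already-established global finiteness of $\|v\|_{L^{10}_{x,t}}$; equivalently, one could simply quote an off-the-shelf persistence-of-regularity lemma. Everything else (Strichartz estimates, the Leibniz rule for the polynomial nonlinearity, the interval subdivision, and the Gronwall-type iteration) is routine.
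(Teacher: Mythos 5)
Your proposal is correct and coincides with the paper's treatment: the paper states Theorem \ref{MainThmEucl} as a direct import of the main theorem of \cite{CKSTTcrit} and offers no proof, exactly because the stated form follows from the $L^{10}_{x,t}$ spacetime bound there by the standard routine you describe (interval subdivision plus Strichartz to get the global $\dot S^1$ bound, the Cauchy criterion for scattering, and the usual persistence-of-regularity iteration for $H^5$ data). Your filled-in details are sound, so there is nothing to add.
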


Our first result in this section is the following lemma:

\begin{lemma}\label{step1}
Assume $\phi\in\dot{H}^1(\mathbb{R}^3)$, $T_0\in(0,\infty)$, and $\rho\in\{0,1\}$ are given, and define $f_{N}$ as in \eqref{rescaled}. Then the following conclusions hold:

(i) There is $N_0=N_0(\phi,T_0)$ sufficiently large such that for any $N\geq N_0$ there is a unique solution $U_{N}\in C((-T_0N^{-2},T_0N^{-2}):H^1(\T^3))$ of the initial-value problem
\begin{equation}\label{clo5}
(i\partial_t+\Delta)U_N=\rho U_N|U_N|^4,\qquad U_N(0)=f_N.
\end{equation}
Moreover, for any $N\geq N_0$,
\begin{equation}\label{clo6}
\|U_N\|_{X^1(-T_0N^{-2},T_0N^{-2})}\lesssim_{E_{\mathbb{R}^3}(\phi)}1.
\end{equation}

(ii) Assume $\varepsilon_1\in(0,1]$ is sufficiently small (depending only on $E_{\mathbb{R}^3}(\phi)$), $\phi'\in H^5(\mathbb{R}^3)$, and $\|\phi-\phi'\|_{\dot{H}^1(\mathbb{R}^3)}\leq\varepsilon_1$. Let $v'\in C(\mathbb{R}:H^5)$ denote the solution of the initial-value problem
\begin{equation*}
(i\partial_t+\Delta_{\R^3})v'=\rho v'|v'|^4,\qquad v'(0)=\phi'.
\end{equation*}
For $R,N\geq 1$ we define
\begin{equation}\label{clo9}
\begin{split}
&v'_R(x,t)=\eta(x/R)v'(x,t),\qquad\,\,\qquad (x,t)\in\mathbb{R}^3\times(-T_0,T_0),\\
&v'_{R,N}(x,t)=N^\frac{1}{2}v'_R(Nx,N^2t),\qquad\quad\,(x,t)\in\mathbb{R}^3\times(-T_0N^{-2},T_0N^{-2}),\\
&V_{R,N}(y,t)=v'_{R,N}(\Psi^{-1}(y),t)\qquad\quad\,\, (y,t)\in\T^3\times(-T_0N^{-2},T_0N^{-2}).
\end{split}
\end{equation}
Then there is $R_0\geq 1$ (depending on $T_0$ and $\phi'$ and $\varepsilon_1$) such that, for any $R\geq R_0$,
\begin{equation}\label{clo18}
\limsup_{N\to\infty}\|U_N-V_{R,N}\|_{X^1(-T_0N^{-2},T_0N^{-2})}\lesssim_{E_{\mathbb{R}^3}(\phi)}\varepsilon_1.
\end{equation}
\end{lemma}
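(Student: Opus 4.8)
\textbf{Proof strategy for Lemma \ref{step1}.}

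The plan is to transfer the Euclidean theory of Theorem \ref{MainThmEucl} to $\T^3$ by working on the rescaled time interval $(-T_0N^{-2},T_0N^{-2})$, where the periodicity of $\T^3$ is invisible, and then to use the stability proposition (Proposition \ref{Stabprop}) to upgrade an approximate solution into the genuine one. For part (i), the idea is to first establish the result for the smooth profile $\phi'\in H^5$, where the Euclidean solution $v'$ is globally controlled in $H^5$ by Theorem \ref{MainThmEucl}, and then remove the smoothing by a density/perturbation argument. I would set $\widetilde{u} = V_{R,N}$ as defined in \eqref{clo9} and compute the error $e = (i\partial_t+\Delta)V_{R,N} - \rho V_{R,N}|V_{R,N}|^4$. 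After the change of variables $x\mapsto Nx$, $t\mapsto N^2t$, this error consists of two pieces: commutator terms where derivatives hit the cutoff $\eta(x/R)$ (these are supported near $|x|\sim R$ and are small in the relevant norm uniformly in $N$ once $R$ is large, because $v'$ scatters and decays), and the error coming from transferring the flat metric/Laplacian to $\T^3$ via $\Psi$ (but since $\Psi$ is the identity on $|x|<1$ and $V_{R,N}$ is supported in a ball of radius $\sim R N^{-1}\to 0$, for $N$ large this error vanishes identically). One also checks that $\|f_N - V_{R,N}(0)\|_{H^1(\T^3)}$ is small: it equals $\|\phi_N - \text{(smooth rescaled piece)}\|$ which, by the $\dot H^1$-invariance of the rescaling and the bound $\|\phi-\phi'\|_{\dot H^1}\le\varepsilon_1$ plus the cutoff estimates, is $\lesssim\varepsilon_1 + o_N(1)$.

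The key technical input is that all the relevant norms of $V_{R,N}$ — namely $\|V_{R,N}\|_{Z}$, $\|V_{R,N}\|_{L^\infty_t H^1}$, and $\|e\|_{N(I)}$ on $I=(-T_0N^{-2},T_0N^{-2})$ — are controlled, uniformly for $N$ large and appropriately small after choosing $R$ large, in terms of $E_{\R^3}(\phi)$. For the $L^\infty_t H^1$ and $Z$ norms this follows by scaling: a function of the form $N^{1/2}g(Nx,N^2t)$ on a time interval of length $\sim N^{-2}$ has $Z$-norm comparable to a scale-invariant Euclidean Strichartz-type quantity of $g$, which is finite by \eqref{clo4}. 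For $\|e\|_{N(I)}$ one uses Proposition \ref{Alex3}/\eqref{EstimNnorm} together with the fact that $e$, after undoing the scaling, is a spatially-localized error term built from $v'$ and $\nabla\eta(\cdot/R)$, which one estimates in $L^1_t H^1$ (or tests against $Y^{-1}$) and shows is $\lesssim_{E}\varepsilon_1$ for $R\ge R_0(T_0,\phi',\varepsilon_1)$ and $N$ large. Once these are in hand, Proposition \ref{Stabprop} with $u_0 = f_N$, $t_0 = 0$, $\widetilde u = V_{R,N}$ produces the solution $U_N$ for $N$ large (so that $\varepsilon = \varepsilon_1 + o_N(1) < \varepsilon_1(M)$), giving simultaneously \eqref{clo6} and \eqref{clo18}. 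This is essentially the scheme of \cite[Section 4]{IoPaSt}, which the authors cite.

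The main obstacle I anticipate is the bookkeeping for the error term $e$ and in particular showing $\|e\|_{N(I)}\lesssim_E\varepsilon_1$ uniformly in $N$: one must carefully separate the contribution of the metric distortion (which is harmless for $N$ large by the small-support observation) from the cutoff commutators, and for the latter one needs the Euclidean scattering bound \eqref{clo4} to guarantee that $v'$ and its derivatives carry little $\dot H^1$-mass at spatial radius $\gtrsim R$ on the whole time interval $(-T_0,T_0)$ — this is where the choice of $R_0$ depending on $\phi'$ and $\varepsilon_1$ enters. A secondary subtlety is that $\phi$ is only in $\dot H^1$, not $H^1$, so one must use the truncation $Q_N$ and verify that the additional error it introduces, measured at scale $N$, is $o_N(1)$ in $H^1(\T^3)$; this is the purpose of the footnote about avoiding contributions from Euclidean infinity, and it is handled by a standard argument bounding $\|\eta(x/N^{1/2})\phi - \phi\|_{\dot H^1}$ and the corresponding low-frequency mass. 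The $\limsup$ rather than $\lim$ in \eqref{clo18} reflects exactly that we only control things for $N$ large.
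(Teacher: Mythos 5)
Your proposal is correct and follows essentially the same route as the paper, which omits the proof and refers to \cite[Lemma 4.2]{IoPaSt}: approximate by a smooth truncated profile $\phi'$, transfer the rescaled cutoff Euclidean solution $v'_{R,N}$ to $\T^3$ as $V_{R,N}$ (harmless since its support shrinks to a point, so $\Psi$ introduces no error for large $N$), estimate the cutoff-commutator error in $N(I)$ using the global spacetime bounds of Theorem \ref{MainThmEucl}, and conclude via Proposition \ref{Stabprop}. The points you flag (the $Q_N$ truncation handling $\phi\in\dot H^1$ only, the choice of $R_0$ via the decay of $v'$, and the $\limsup$ in \eqref{clo18}) are exactly the issues the cited argument addresses.
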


The proof of Lemma \ref{step1} is very similar to the proof of \cite[Lemma 4.2]{IoPaSt} and is omitted. To understand linear and nonlinear evolutions beyond the Euclidean window we need an additional extinction lemma:

\begin{lemma}\label{Extinction}
Let $\phi\in \dot{H}^1(\mathbb{R}^3)$ and define $f_N$ as in \eqref{rescaled}. For any $\varepsilon>0$, there exists $T=T(\psi,\varepsilon)$ and $N_0(\psi,\varepsilon)$ such that for all $N\ge N_0$, there holds that
\begin{equation*}
\Vert e^{it\Delta}f_{N}\Vert_{Z(TN^{-2},T^{-1})}\lesssim\varepsilon.
\end{equation*}
\end{lemma}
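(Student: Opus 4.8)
The plan is to estimate the $Z$-norm of $e^{it\Delta}f_N$ on the time interval $(TN^{-2}, T^{-1})$ by splitting it into two regimes: a ``near-Euclidean window'' regime $(TN^{-2}, T_1 N^{-2})$ where we can still compare with the Euclidean linear flow, and a ``post-Euclidean'' regime $(T_1N^{-2}, T^{-1})$ where rescaling no longer helps but where the data have already dispersed. In the first regime, rescaling via $\Psi$ transfers the problem to $\R^3$ at frequencies $\lesssim N$; by Lemma~\ref{step1}(i) (or directly the comparison of $e^{it\Delta}f_N$ with $e^{it\Delta_{\R^3}}\phi_N$) together with the fact that the Euclidean linear solution scatters and in particular has small global Strichartz norm on the complement of a fixed compact time interval, one gets that the contribution of $(TN^{-2}, T_1N^{-2})$ to the $Z$-norm is $\lesssim \varepsilon$ once $T$ is large (to push the Euclidean window out to where $\phi$ has already dispersed, using smallness of $\||\nabla_{\R^3}e^{it\Delta_{\R^3}}\phi|\|_{L^2_tL^6_x(\{|t|>T\})}$ and approximating $\phi$ by an $H^5$ profile $\phi'$ as in Lemma~\ref{step1}(ii)). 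The key quantitative input here is the Strichartz/$Z$-bound of Theorem~\ref{Stric} and Corollary~\ref{alex1}, applied to dyadic pieces $P_N$, combined with the scaling of the $Z$-norm.

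For the second regime $(T_1N^{-2}, T^{-1})$, which is the genuinely new part, the idea described in the introduction is to decompose $e^{it\Delta}$ into a piece that propagates the wave packets a distance $\sim 1$ and a piece whose packets have exited a ball but not refocused. Concretely I would write $f_N = \sum_{N'} P_{N'} f_N$ and note that $\|\mathcal{F}f_N\|$ is essentially supported on $|\xi|\lesssim N$, with $\|P_{N'}f_N\|_{L^2}$ decaying in $N'/N$ (and in $1/N'$) by the smoothness of the cutoff $\eta$; for a fixed dyadic frequency $N'$ the linear flow $e^{it\Delta}$ is $2\pi$-periodic in $t$ with the well-understood behavior on rational versus irrational times. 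On a time window of length $\lesssim 1$ (not just $\lesssim N^{-2}$) one uses the dispersive estimate of Theorem~\ref{Stric} directly on $P_{N'}e^{it\Delta}f_N$: each such piece contributes $\lesssim N'^{\,3/2 - 5/p}\|P_{N'}f_N\|_{L^2}$ to the relevant $L^p_{x,t}$ norm, and the $Z$-norm weight $N'^{\,5-p/2}$ combined with the $\ell^p$ summation over $N'$ yields a bound of the form $(\sum_{N'} (\cdots))^{1/p}$ which, after inserting $\|P_{N'}f_N\|_{L^2}\lesssim \min((N'/N)^a, N'^{-a})\cdot E_{\R^3}(\phi)^{1/2}$ for some $a>0$, is summable and gives a bound $\lesssim E_{\R^3}(\phi)^{1/2}$ uniformly in $N$ — but not yet small. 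To gain the smallness, one restricts the high-frequency part $N' \gtrsim N^\theta$ (some small $\theta$): there the factor $(N'/N)^{-?}$ or the remaining room in the $p$-exponents gives a power of $N$, hence $o(1)$; for the remaining low frequencies $N' \lesssim N^\theta$, these correspond precisely to modes that, on a time interval of length $\ge T_1 N^{-2}$ with $T_1$ large, have ``had time to exit a given ball'', so one invokes the finer refocusing analysis — splitting the time interval into $\sim T N^{-2}$-blocks, using the near-Euclidean comparison on each block plus near-orthogonality, with the number of blocks being $\sim T$ and each contributing $o(1)$, whence the total is small after choosing $T$ appropriately relative to $\varepsilon$.

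The ordering I would follow is: (1) reduce to controlling each $L^p_{x,t}$-piece ($p\in\{p_0,p_1\}$) of $\sum_{N'} N'^{\,5-p/2}\|P_{N'}e^{it\Delta}f_N\|^p_{L^p_{x,t}(\T^3\times J)}$ over subintervals $J$ of the given window with $|J|\le 1$; (2) establish the frequency-localization estimate $\|P_{N'}f_N\|_{L^2}\lesssim \min\{(N'/N),\,N'^{-1}\}^{a}$, which follows from the explicit form \eqref{rescaled} and integration by parts in the Fourier transform; (3) handle the Euclidean window $(TN^{-2},T_1N^{-2})$ via Lemma~\ref{step1} and Euclidean scattering, choosing $T_1 = T_1(\phi,\varepsilon)$; (4) handle frequencies $N'\ge N^\theta$ in the post-Euclidean window by brute-force Strichartz, gaining a power of $N$; (5) handle the surviving low frequencies $N'\le N^\theta$ in $(T_1N^{-2}, T^{-1})$ by tiling into $O(T)$ Euclidean-length blocks and using the comparison estimate plus almost-orthogonality, so each block contributes $\lesssim \varepsilon/T$. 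The main obstacle is step (5): making precise the ``not refocused more than $o(1)$ percent of their modes'' statement — i.e., quantitatively controlling how the rescaled Euclidean profile, once dispersed, fails to reconstitute into a bump on $\T^3$ over the relevant time scales — and doing so with bounds that are uniform in $N$ and genuinely summable against the $Z$-norm weights rather than merely bounded. Everything else is a fairly standard, if careful, interpolation-and-scaling exercise using Theorem~\ref{Stric}, Corollary~\ref{alex1}, and Lemma~\ref{step1}.
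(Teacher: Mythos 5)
There is a genuine gap, and it sits exactly where you flag your ``main obstacle.'' The dangerous frequencies for $e^{it\Delta}f_N$ are $N'\sim N$, not the low ones: at $N'\sim N$ the Strichartz bound of Theorem \ref{Stric} gives $\Vert e^{it\Delta}P_{N'}f_N\Vert_{L^p_{x,t}}\lesssim N^{3/2-5/p}\Vert P_{N'}f_N\Vert_{L^2}\sim N^{1/2-5/p}$, which the $Z$-norm weight $N'^{5/p-1/2}$ cancels exactly (this is precisely the criticality of the norm), so your step (4) ``brute-force Strichartz gaining a power of $N$'' produces an $O(1)$ contribution uniformly in $N$ and $T$, not $o(1)$. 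Conversely, the frequencies $N'\ll N$ that you send to the refocusing analysis in step (5) are in fact the easy ones: Strichartz plus the weights plus $\Vert P_{N'}f_N\Vert_{L^2}\lesssim N^{-1}$ already give a contribution $\lesssim (N'/N)^{c}$, summable to $T^{-c}$ once $N'\le NT^{-1/100}$. Your step (5) mechanism is also not coherent as stated: the interval $(T_1N^{-2},T^{-1})$ contains $\sim N^2/(TT_1)\to\infty$ Euclidean-length blocks (not $O(T)$), there is no useful almost-orthogonality between time blocks in the $L^p_{t,x}$ norms with $p>2$ entering $Z$, and the Euclidean comparison of Lemma \ref{step1} is unavailable on later blocks since the evolved data is no longer a concentrated rescaled profile.

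What is missing is the quantitative non-refocusing input that the paper actually uses, and which is number-theoretic rather than a tiling argument: Bourgain's Weyl-sum bound on the kernel $K_M(x,t)=e^{it\Delta}P_{\le M}\delta_0$ near rational times $t/(2\pi)\approx a/q$, combined with Dirichlet's approximation lemma, gives $\Vert K_M\Vert_{L^\infty_{x,t}(\T^3\times[SM^{-2},S^{-1}])}\lesssim S^{-3/2}M^3$. Convolving this with the crucial bound $\Vert P_Kf_N\Vert_{L^1(\T^3)}\lesssim_\phi N^{-5/2}$ (valid after reducing to $\phi\in C^\infty_0$) yields $\Vert e^{it\Delta}P_Kf_N\Vert_{L^\infty_{x,t}(\T^3\times[TN^{-2},T^{-1}])}\lesssim_\phi T^{-3/2}(K+N)^3N^{-5/2}$, and interpolating this $L^\infty$ gain with Strichartz gives smallness $T^{-1+5/p}N^{1/2-5/p}$ precisely at the critical frequencies $K\in[NT^{-1/100},NT^{1/100}]$; the remaining frequencies are disposed of by Strichartz and the frequency localization of $f_N$ with a $T^{-1/100}$ gain. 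In particular the paper does not need the Euclidean comparison at all in this lemma: the lower time cutoff $TN^{-2}$ enters only through $S\approx T$ in the kernel estimate. Your proposal never supplies any substitute for this $L^\infty$ kernel bound, so the core of the lemma --- smallness at frequencies comparable to $N$ on times in $[TN^{-2},T^{-1}]$ --- remains unproved.
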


\begin{proof}[Proof of Lemma \ref{Extinction}] For $M\geq 1$, we define
\begin{equation*}
K_M(x,t)=\sum_{\xi\in\mathbb{Z}^3}e^{-i[t\vert \xi\vert^2+x\cdot\xi]}\eta^3(\xi/M)=e^{it\Delta}P_{\le M}\delta_0.
\end{equation*}
We note from \cite[Lemma 3.18]{Bo2} that $K_M$ satisfies
\begin{equation}\label{no2}
|K_M(x,t)|\lesssim \Big[\frac{M}{\sqrt{q}(1+M|t/(2\pi)-a/q|^{1/2})}\Big]^3
\end{equation}
if
\begin{equation}\label{no3}
t/(2\pi)=a/q+\beta,\qquad q\in\{1,\ldots,M\},\,\,a\in\mathbb{Z},\,\,(a,q)=1,\,\,|\beta|\leq (Mq)^{-1}.
\end{equation}
From this, we conclude that for any $1\leq S\leq M$
\begin{equation}\label{ExtincEst}
\Vert K_M(x,t)\Vert_{L^\infty_{x,t}(\mathbb{T}^3\times [SM^{-2},S^{-1}])}\lesssim S^{-3/2}M^3.
\end{equation}
This follows directly from \eqref{no2} and Dirichlet's lemma, by considering the cases $|t|\in[SM^{-2},M^{-1}]$ and $|t|\in[M^{-1},S^{-1}]$.

In view of the Strichartz estimates in Theorem \ref{Stric}, to prove the lemma we may assume that $\phi\in C^\infty_0(\mathbb{R}^3)$. In this case, from the definition,
\begin{equation}\label{Alex10}
\Vert P_Kf_N\Vert_{L^1(\mathbb{T}^3)}\lesssim_{\phi} N^{-5/2},\quad \Vert P_Kf_N\Vert_{L^2(\mathbb{T}^3)}\lesssim_\phi (1+K/N)^{-10}N^{-1}.
\end{equation}

Using the Strichartz estimates in Theorem \ref{Stric}, we obtain, for $p\in[5,\infty]$,
\begin{equation}\label{Alex11}
\Vert e^{it\Delta}P_Kf_N\Vert_{L^p_{x,t}(\mathbb{T}^3\times[-1,1])}\lesssim_\phi K^{3/2-5/p}(1+K/N)^{-10}N^{-1}.
\end{equation}
Therefore, if $1\leq T\leq N$ and $p\in\{6,24\}$,
\begin{equation}\label{Alex12}
\sum_{K\notin [NT^{-1/100},NT^{1/100}]}K^{5-p/2}\Vert e^{it\Delta}P_Kf_N\Vert_{L^p_{x,t}(\mathbb{T}^3\times[-1,1])}^p\lesssim_\phi T^{-1/100}.
\end{equation}

To estimate the remaining sum over $K\in [NT^{-1/100},NT^{1/100}]$ we use the first bound in \eqref{Alex10} together with \eqref{ExtincEst} (with $M\approx \max (K,N)$, $S\approx T$). It follows that, for all $K$,
\begin{equation}\label{Alex13}
\Vert e^{it\Delta}P_Kf_N\Vert_{L^\infty_{x,t}(\mathbb{T}^3\times [TN^{-2},T^{-1}])}\lesssim_\phi T^{-3/2}(K+N)^3N^{-5/2}.
\end{equation}
Interpolating with \eqref{Alex11}, for $p\in[5,\infty]$ and $K\in [NT^{-1/100},NT^{1/100}]$
\begin{equation}\label{Alex14}
\Vert e^{it\Delta}P_Kf_N\Vert_{L^p_{x,t}(\mathbb{T}^3\times [TN^{-2},T^{-1}])}\lesssim_\phi T^{-1+5/p}N^{1/2-5/p}.
\end{equation}
The lemma follows using \eqref{Alex12} and \eqref{Alex14}, by setting $T=T(\varepsilon,\psi)$ sufficiently large.
\end{proof}

For later use we record one more estimate that follows from \eqref{Alex13} and \eqref{Alex10}: if $\phi\in C^\infty_0(\R^3)$, $p\in [4,\infty]$, $1\leq T\leq N$, and $f_N$ is defined as in \eqref{rescaled}, then
\begin{equation}\label{ELiL1}
\sup_{|t|\in [TN^{-2},T^{-1}]}\|e^{it\Delta}f_N\|_{L^p(\T^3)}\lesssim_\phi T^{-1/10}N^{1/2-3/p}.
\end{equation}

We conclude this section with a proposition describing nonlinear solutions of the initial-value problem \eqref{eq1.1} corresponding to data concentrating at a point. In view of the profile analysis in the next section, we need to consider slightly more general data. Given $f\in L^2(\T^3)$, $t_0\in\mathbb{R}$ and $x_0\in\T^3$ we define
\begin{equation*}
\begin{split}
&(\pi_{x_0}f)(x):=f(x-x_0),\\
&(\Pi_{t_0,x_0})f(x)=(e^{-it_0\Delta}f)(x-x_0)=(\pi_{x_0}e^{-it_0\Delta}f)(x).
\end{split}
\end{equation*}
As in \eqref{rescaled}, given $\phi\in\dot{H}^1(\mathbb{R}^3)$ and $N\geq 1$, we define
\begin{equation*}
T_N\phi(x):=N^\frac{1}{2}\widetilde{\phi}(N\Psi^{-1}(x))\qquad\text{ where }\qquad\widetilde{\phi}(y):=\eta(y/N^{1/2})\phi(y),
\end{equation*}
and observe that
\begin{equation*}
T_N:\dot{H}^1(\mathbb{R}^3)\to H^1(\T^3)\text{ is a linear operator with }\|T_N\phi\|_{H^1(\T^3)}\lesssim \|\phi\|_{\dot{H}^1(\mathbb{R}^3)}.
\end{equation*}
Let $\widetilde{\mathcal{F}}_e$ denote the set of renormalized Euclidean frames
\begin{equation*}
\begin{split}
\widetilde{\mathcal{F}}_e:=\{(N_k,t_k,x_k)_{k\geq 1}:&\,N_k\in[1,\infty),\,t_k\to 0,\,x_k\in\T^3,\\
&\,N_k\to\infty,\text{ and }t_k=0 \text{ or }N_k^2|t_k|\to\infty\}.
\end{split}
\end{equation*}

\begin{proposition}\label{GEForEP}
Assume that $\mathcal{O}=(N_k,t_k,x_k)_k\in\widetilde{\mathcal{F}}_e$, $\phi\in\dot{H}^1(\mathbb{R}^3)$, and let $U_k(0)=\Pi_{t_k,x_k}(T_{N_k}\phi)$.

(i) There exists $\tau=\tau(\phi)$ such that for $k$ large enough (depending only on $\phi$ and $\mathcal{O}$) there is a nonlinear solution $U_k\in X^1(-\tau,\tau)$ of the initial-value problem \eqref{eq1.1} and
\begin{equation}\label{ControlOnZNormForEP}
\Vert U_k\Vert_{X^1(-\tau,\tau)}\lesssim_{E_{\mathbb{R}^3}(\phi)}1.
\end{equation}

(ii) There exists a Euclidean solution $u\in C(\mathbb{R}:\dot{H}^1(\mathbb{R}^3))$ of
\begin{equation}\label{EEq}
\left(i\partial_t+\Delta_{\R^3}\right)u=u\vert u\vert^4
\end{equation}
with scattering data $\phi^{\pm\infty}$ defined as in \eqref{EScat} such that the following holds, up to a subsequence:
for any $\varepsilon>0$, there exists $T(\phi,\varepsilon)$ such that for all $T\ge T(\phi,\varepsilon)$ there exists $R(\phi,\varepsilon,T)$ such that for all $R\ge R(\phi,\varepsilon,T)$, there holds that
\begin{equation}\label{ProxyEuclHyp}
\Vert U_k-\tilde{u}_k\Vert_{X^1(\{\vert t-t_k\vert\le TN_k^{-2}\}\cap\{\vert t\vert\le T^{-1}\})}\le\varepsilon,
\end{equation}
for $k$ large enough, where
\begin{equation*}
(\pi_{-x_k}\tilde{u}_k)(x,t)=N_k^\frac{1}{2}\eta(N_k\Psi^{-1}(x)/R)u(N_k\Psi^{-1}(x),N_k^2(t-t_k)).
\end{equation*}
In addition, up to a subsequence,
\begin{equation}\label{ScatEuclSol}
\Vert U_k(t)-\Pi_{t_k-t,x_k}T_{N_k}\phi^{\pm\infty}\Vert_{X^1(\{\pm(t-t_k)\geq TN_k^{-2}\}\cap \{\vert t \vert\le T^{-1}\})}\le \varepsilon,
\end{equation}
for $k$ large enough (depending on $\phi,\varepsilon,T,R$).
\end{proposition}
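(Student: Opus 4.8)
The plan is to construct $U_k$ on a fixed interval $(-\tau,\tau)$ by splitting it into a central \emph{Euclidean window} $W_k=\{|t-t_k|\le TN_k^{-2}\}$, on which $U_k$ is governed by the rescaled transferred Euclidean flow via Lemma \ref{step1}, and two lateral \emph{dispersed regions}, on which $U_k$ stays $\varepsilon$-close to an explicit linear solution by combining the extinction Lemma \ref{Extinction} with Proposition \ref{LWP}; the three pieces are glued by the stability Proposition \ref{Stabprop}. Since \eqref{EEq} and the norms $X^1,Z,N$ are invariant under the translation $\pi_{x_k}$ and $\Pi_{t_k,x_k}=\pi_{x_k}\circ\Pi_{t_k,0}$, we may assume $x_k=0$; by the definition of $\widetilde{\mathcal{F}}_e$ there are two cases, (A) $t_k=0$, and (B) $N_k^2|t_k|\to\infty$, in which case the symmetry $(t,u)\mapsto(-t,\overline u)$ lets us take $t_k>0$. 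We choose the Euclidean solution $u$ of \eqref{EEq}: in case (A) the one with $u(0)=\phi$; in case (B) the one with backward scattering data $\phi$, i.e.\ $\|u(s)-e^{is\Delta_{\R^3}}\phi\|_{\dot{H}^1}\to0$ as $s\to-\infty$ (its existence is that of the wave operator, which follows from the small-data theory together with the global bound \eqref{clo4}, cf.\ \cite{CKSTTcrit}). Let $\phi^{\pm\infty}$ be its scattering data as in \eqref{EScat}; then $\phi^{-\infty}=\phi$ in case (B), and $E_{\R^3}(\phi^{\pm\infty})\le E_{\R^3}(\phi)$ always. For the given $\varepsilon>0$ -- which, for part (ii), we may assume smaller than a constant depending on $\phi$ -- we pick $\varepsilon_1=\varepsilon_1(\phi,\varepsilon)\in(0,1]$ below the threshold of Lemma \ref{step1}(ii) and $\le\varepsilon$, an $H^5$-approximant $\Phi'$ of $u(0)$ with $\|u(0)-\Phi'\|_{\dot{H}^1}\le\varepsilon_1$, and the $H^5$ Euclidean solution $u'$ with $u'(0)=\Phi'$; by Theorem \ref{MainThmEucl} and Euclidean stability, $\sup_s\|u'(s)\|_{H^5}\lesssim_{\Phi'}1$ and $\sup_s\|u(s)-u'(s)\|_{\dot{H}^1}\lesssim_{E_{\R^3}(\phi)}\varepsilon_1$.

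Fix next $T\ge T(\phi,\varepsilon)$ large enough that $\|u(\pm T)-e^{\pm iT\Delta_{\R^3}}\phi^{\pm\infty}\|_{\dot{H}^1}\le\varepsilon$ and that Lemma \ref{Extinction}, applied to each of $\phi,\phi^{+\infty},\phi^{-\infty}$, yields $\|e^{it\Delta}T_{N_k}\psi\|_{Z(\{TN_k^{-2}\le|t|\le2T^{-1}\})}\le\varepsilon$ for all $k$ large. By Lemma \ref{step1}(i), translated in time by $t_k$ and with $\rho=1$, the solution $\widetilde U_k$ of \eqref{eq1.1} with data $T_{N_k}u(0)$ at time $t_k$ exists on $W_k$ with $\|\widetilde U_k\|_{X^1(W_k)}\lesssim_{E_{\R^3}(\phi)}1$, and by Lemma \ref{step1}(ii) (used with $\Phi'$ in place of $\phi'$, then replacing $u'$ by $u$ at cost $\varepsilon_1$, with $R$ taken large) there is $R=R(\phi,\varepsilon,T)$ with $\limsup_k\|\widetilde U_k-\widetilde u_k\|_{X^1(W_k)}\lesssim_{E_{\R^3}(\phi)}\varepsilon_1$. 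The same analysis controls the edges: combining the rescaled-Euclidean approximation of $\widetilde U_k$ on $W_k$, the scattering of $u$, the comparison of the periodic and Euclidean linear flows over the time $TN_k^{-2}$ (the elementary Euclidean-window estimate as in the proof of Lemma \ref{step1}, cf.\ \cite{IoPaSt}, valid because $T$ is fixed), and the harmlessness of the spatial cutoff in $\widetilde u_k$ for $R$ large, one gets, for $k$ large, $\|\widetilde U_k(t_k\pm TN_k^{-2})-e^{\pm iTN_k^{-2}\Delta}T_{N_k}\phi^{\pm\infty}\|_{H^1(\T^3)}\lesssim_{E_{\R^3}(\phi)}\varepsilon$.

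It remains to connect the window to $t=0$ and push the solution out to $|t|=T^{-1}$. In case (A), $t_k=0$ and $U_k(0)=T_{N_k}\phi=T_{N_k}u(0)=\widetilde U_k(0)$, so $U_k=\widetilde U_k$ on $W_k$ by uniqueness. In case (B), $e^{it\Delta}U_k(0)=e^{i(t-t_k)\Delta}T_{N_k}\phi$, whose $Z$-norm on $[-T^{-1},\,t_k-TN_k^{-2}]$ is $\le\varepsilon$ for $k$ large (by the extinction bound and $t_k\to0$); Proposition \ref{LWP}(i) produces $U_k$ there with $\|U_k\|_{X^1}\lesssim_{E_{\R^3}(\phi)}1$ and $\|U_k-e^{i(t-t_k)\Delta}T_{N_k}\phi\|_{X^1}\lesssim\varepsilon^{3/2}$, and since $\phi^{-\infty}=\phi$ this forces $U_k(t_k-TN_k^{-2})$ to lie within $O_{E_{\R^3}(\phi)}(\varepsilon)+o_k(1)$ in $H^1$ of $e^{-iTN_k^{-2}\Delta}T_{N_k}\phi$, hence of $\widetilde U_k(t_k-TN_k^{-2})$; Proposition \ref{Stabprop} (with $e=0$, $\rho=1$, $t_0=t_k-TN_k^{-2}$) then extends $U_k$ across $W_k$ with $\|U_k-\widetilde U_k\|_{X^1(W_k)}\lesssim_{E_{\R^3}(\phi)}\varepsilon+o_k(1)$. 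In both cases $\limsup_k\|U_k-\widetilde u_k\|_{X^1(W_k)}\lesssim_{E_{\R^3}(\phi)}\varepsilon$, which is \eqref{ProxyEuclHyp} after replacing $\varepsilon$ by a small multiple of itself and restoring $\pi_{x_k}$. Next, at $t=t_k+TN_k^{-2}$ the edge bound above gives $\|U_k(t_k+TN_k^{-2})-e^{iTN_k^{-2}\Delta}T_{N_k}\phi^{+\infty}\|_{H^1}\lesssim_{E_{\R^3}(\phi)}\varepsilon+o_k(1)$; since $\|v\|_{Z(I)}\lesssim\|v\|_{X^1(I)}$ and $\|e^{it\Delta}g\|_{X^1(\R)}\lesssim\|g\|_{H^1}$, the choice of $T$ makes the $Z$-norm of the linear evolution of that datum on $[t_k+TN_k^{-2},T^{-1}]$ at most $C_{E_{\R^3}(\phi)}\varepsilon$ for $k$ large, so Proposition \ref{LWP}(i) extends $U_k$ up to $T^{-1}$ with $\|U_k-e^{i(t-t_k)\Delta}T_{N_k}\phi^{+\infty}\|_{X^1([t_k+TN_k^{-2},T^{-1}])}\lesssim_{E_{\R^3}(\phi)}\varepsilon+o_k(1)$; since $\Pi_{t_k-t,x_k}T_{N_k}\phi^{+\infty}=\pi_{x_k}e^{i(t-t_k)\Delta}T_{N_k}\phi^{+\infty}$, this is \eqref{ScatEuclSol} with the $+$ sign, and the $-$ sign follows symmetrically from the left edge (in case (B) it is already contained in the dispersed bound, since there $\phi^{-\infty}=\phi$). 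Concatenating the at most three pieces gives $U_k$ on $(-\tau,\tau)$, $\tau:=T^{-1}$, with $\|U_k\|_{X^1(-\tau,\tau)}\lesssim_{E_{\R^3}(\phi)}1$, unique by Proposition \ref{LWP}. For part (i) one runs this with $\varepsilon$ a fixed small constant depending only on $\phi$ (small enough for the thresholds of Proposition \ref{LWP}(i) at energy level $\lesssim\|\phi\|_{\dot{H}^1}$), obtaining $\tau=\tau(\phi)=T(\phi)^{-1}$ and \eqref{ControlOnZNormForEP}; running it with the given $\varepsilon$ and $T$ yields \eqref{ProxyEuclHyp} and \eqref{ScatEuclSol}, after passing to a subsequence (extracted diagonally, so that one subsequence serves all parameter choices).

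The parts using only the extinction lemma and the local theory in the dispersed regions are routine bookkeeping. The point requiring genuine care is case (B): one must recognize that the correct Euclidean profile is the solution with prescribed backward (resp.\ forward) scattering data $\phi$ rather than the one with $u(0)=\phi$, and then verify that after the long dispersed stretch from $t=0$ the periodic solution reaches the edge $t=t_k-TN_k^{-2}$ of the Euclidean window with $H^1$-discrepancy only $o(1)+O(\varepsilon)$ relative to the rescaled transferred Euclidean profile -- i.e.\ that the extinction estimate and the periodic-versus-Euclidean comparison of the linear flows can be chained at each interface without an $O(1)$ loss. This works precisely because on both sides of every interface the solution is compared to a linear flow that is tracked exactly, so no uncontrolled phase is generated; making this matching quantitative and uniform in $k$ (with the error terms genuinely $o_k(1)$) is the main technical content of the argument.
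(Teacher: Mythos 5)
Your proposal is correct and rests on exactly the same ingredients as the paper: Lemma \ref{step1} inside the Euclidean window, Lemma \ref{Extinction} plus Proposition \ref{LWP} in the dispersed regions, Proposition \ref{Stabprop} to glue, and, crucially, the same choice in the time-translated case of the Euclidean solution $u$ with prescribed backward scattering datum $\phi$ (so $\phi^{-\infty}=\phi$). The only real difference is organizational, in the case $N_k^2|t_k|\to\infty$: the paper does not redo the three-piece gluing there, but instead applies the already-proved case $t_k=0$ to the auxiliary profile $V_k$ with data $V_k(0)=T_{N_k}u(0)$, reads off from \eqref{ScatEuclSol} that $\Vert V_k(-t_k)-\Pi_{t_k,0}T_{N_k}\phi\Vert_{H^1}\to 0$, and then invokes Proposition \ref{Stabprop} once, at $t=0$, to conclude $\Vert U_k-V_k(\cdot-t_k)\Vert_{X^1}\to0$; your version instead matches $U_k$ to the window solution at the edge $t_k-TN_k^{-2}$ after tracking the linear flow from $t=0$. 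Both arguments work; the paper's reduction is shorter because it avoids re-verifying the interface (edge) estimates, which you must carry out explicitly (and do, correctly, by chaining Lemma \ref{step1} with $\rho=0$, the scattering of $u$, and the extinction bound).
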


\begin{proof}[Proof of Proposition \ref{GEForEP}] Clearly, we may assume that $x_k=0$.

We have two cases. If $t_k=0$ for any $k$ then the lemma follows from Theorem \ref{MainThmEucl}, Lemma \ref{step1} and Lemma \ref{Extinction}: we let $u$ be the nonlinear Euclidean solution of \eqref{EEq} with $u(0)=\phi$ and notice that for any $\delta>0$ there is $T(\phi,\delta)$ such that
\begin{equation*}
\|\nabla_{\R^3} u\|_{L^\frac{10}{3}_{x,t}(\mathbb{R}^3\times\{|t|\geq T(\phi,\delta)\})}\leq\delta.
\end{equation*}
The bound \eqref{ProxyEuclHyp} follows for any fixed $T\geq T(\phi,\delta)$ from Lemma \ref{step1}. Assuming $\delta$ is sufficiently small and $T$ is sufficiently large (both depending on $\phi$ and $\varepsilon$), the bound \eqref{ScatEuclSol} then follow from Theorem \ref{MainThmEucl}, Lemma \ref{step1} and Lemma \ref{Extinction} (which guarantee smallness of $\mathbf{1}_{\pm}(t)\cdot e^{it\Delta}U_k(\pm N_k^{-2}T(\phi,\delta))$ in $Z(\{\vert t\vert\le T^{-1}\})$) and Proposition \ref{LWP}.

Otherwise, if $\lim_{k\to\infty}N_k^2|t_k|=\infty$, we may assume by symmetry that $N_k^2t_k\to+\infty$. Then we let $u$ be the solution of
\eqref{EEq} such that
\begin{equation*}
\Vert\nabla_{\R^3}\left(u(t)-e^{it\Delta_{\R^3}}\phi\right)\Vert_{L^2(\mathbb{R}^3)}\to0
\end{equation*}
as $t\to-\infty$ (thus $\phi^{-\infty}=\phi$).
We let $\tilde{\phi}=u(0)$ and  apply the conclusions of the lemma to the frame $(N_k,0,0)_k\in\mathcal{F}_e$ and $V_k(s)$, the solution of \eqref{eq1} with initial data $V_k(0)=T_{N_k}\tilde{\phi}$. In particular, we see from the fact that $N_k^2t_k\to+\infty$ and \eqref{ScatEuclSol} that
\begin{equation*}
\Vert V_k(-t_k)-\Pi_{t_k,0}T_{N_k}\phi\Vert_{H^1(\T^3)}\to 0
\end{equation*}
as $k\to\infty$. Then, using Proposition \ref{Stabprop}, we see that
\begin{equation*}
\Vert U_k-V_k(\cdot-t_k)\Vert_{X^1(-T^{-1},T^{-1})}\to 0
\end{equation*}
as $k\to\infty$, and we can conclude by inspecting the behavior of $V_k$. This ends the proof.
\end{proof}

%%%%%%%%%%%%%%%%%%%%%%%%%%%%%%%%%%%%%%%%%%%%%%%%%%%%%%%%%%%%%%%%%%%%%%%%%%%%%%%%%%%%%%%%%%%%%%%%%%%%%%%%%%%%%%%%%%%%%%%%%%%%%%%%%%%%%%%%%%%%%%%%%%%%%%%%%%%%%%%%%%%%%%%%%%%%%%%%%%%%%%%%%%%%%%%%%%%%%%%%%%%%%%%%%%%%%%%%%%%%%%%%%%%%%%%%%%%%%%%%%%%%%%%%%%%%%%%%%%%%%%%%%%%%%%%%%%%%%%%%%%%%%%%%

\section{Profile decompositions}\label{profiles}

In this section we show that given a bounded sequence of functions $f_k\in H^1(\T^3)$ we can construct suitable {\it{profiles}} and express the sequence in terms of these profiles. The statements and the arguments in this section are very similar to those in \cite[Section 5]{IoPaSt} and \cite[Section 5]{IoPa}. See also \cite{Ker} for the original proofs of Keraani in the Euclidean geometry.

As before, given $f\in L^2(\R^3)$, $t_0\in\mathbb{R}$, and $x_0\in\T^3$ we define
\begin{equation}\label{PI}
\begin{split}
&(\pi_{x_0}f)(x):=f(x-x_0),\\
&(\Pi_{t_0,x_0})f(x)=(e^{-it_0\Delta}f)(x-x_0)=(\pi_{x_0}e^{-it_0\Delta}f)(x).
\end{split}
\end{equation}
As in \eqref{rescaled}, given $\phi\in\dot{H}^1(\mathbb{R}^3)$ and $N\geq 1$, we define
\begin{equation}\label{TN}
T_N\phi(x):=N^\frac{1}{2}\widetilde{\phi}(N\Psi^{-1}(x))\qquad\text{ where }\qquad\widetilde{\phi}(y):=\eta(y/N^{1/2})\phi(y),
\end{equation}
and observe that
\begin{equation}\label{TN2}
T_N:\dot{H}^1(\mathbb{R}^3)\to H^1(\T^3)\text{ is a linear operator with }\|T_N\phi\|_{H^1(\T^3)}\lesssim \|\phi\|_{\dot{H}^1(\mathbb{R}^3)}.
\end{equation}

The following is our main definition.

\begin{definition}\label{DefPro}

\begin{enumerate}

\item We define a Euclidean frame to be a sequence $\mathcal{F}_e=(N_k,t_k,x_k)_k$ with $N_k\ge 1$, $N_k\to+\infty$, $t_k\in\mathbb{R}$, $t_k\to 0$, $x_k\in\mathbb{T}^3$. We say that two frames $(N_k,t_k,x_k)_k$ and $(M_k,s_k,y_k,)_k$ are orthogonal if
\begin{equation*}
\lim_{k\to+\infty} \left(\left\vert \ln\frac{N_k}{M_k}\right\vert+N_k^2\vert t_k-s_k\vert+N_k\vert x_k-y_k\vert\right)=+\infty.\end{equation*}
Two frames that are not orthogonal are called equivalent.

\item If $\mathcal{O}=(N_k,t_k,x_k)_k$ is a Euclidean frame and if $\phi\in \dot{H}^1(\mathbb{R}^3)$, we define the Euclidean profile associated to $(\phi,\mathcal{O})$ as the sequence $\widetilde{\phi}_{\mathcal{O}_k}$
\begin{equation*}
\widetilde{\phi}_{\mathcal{O}_k}(x):=\Pi_{t_k,x_k}(T_{N_k}\phi).
\end{equation*}
\end{enumerate}
\end{definition}

The following lemma summarizes some of the basic properties of profiles associated to equivalent/orthogonal frames. Its proof uses Lemma \ref{step1} with $\rho=0$ to control linear evolutions inside the Euclidean window and the bound \eqref{ELiL1} to control these evolutions outside such a window. Given these ingredients, the proof of Lemma \ref{EquivFrames} is very similar to the proof of Lemma 5.4 in \cite{IoPaSt}, and is omitted.

\begin{lemma}(Equivalence of frames)\label{EquivFrames}

(i) If $\mathcal{O}$ and $\mathcal{O}^\prime$ are equivalent Euclidean profiles, then, there exists an isometry of $\dot{H}^1(\mathbb{R}^3)$, $T$ such that for any profile $\widetilde{\psi}_{\mathcal{O}^\prime_k}$, up to a subsequence there holds that
\begin{equation}\label{equiv}
\limsup_{k\to+\infty}
\Vert \widetilde{T\psi}_{\mathcal{O}_k}-\widetilde{\psi}_{\mathcal{O}^\prime_k}\Vert_{H^1(\mathbb{T}^3)}=0.
\end{equation}

(ii) If $\mathcal{O}$ and $\mathcal{O}^\prime$ are orthogonal frames and $\widetilde{\psi}_{\mathcal{O}_k}$, $\widetilde{\varphi}_{\mathcal{O}^\prime_k}$ are corresponding profiles, then, up to a subsequence,
\begin{equation*}
\begin{split}
\lim_{k\to+\infty}\langle \widetilde{\psi}_{\mathcal{O}_k},\widetilde{\varphi}_{\mathcal{O}^\prime_k}\rangle_{H^1\times H^1(\mathbb{T}^3)}&=0,\\
\lim_{k\to+\infty}\langle |\widetilde{\psi}_{\mathcal{O}_k}|^3,|\widetilde{\varphi}_{\mathcal{O}^\prime_k}|^3\rangle_{L^2\times L^2(\mathbb{T}^3)}&=0.
\end{split}
\end{equation*}

(iii) If $\mathcal{O}$ is a Euclidean frame and $\widetilde{\psi}_{\mathcal{O}_k}$, $\widetilde{\varphi}_{\mathcal{O}_k}$ are two profiles corresponding to $\mathcal{O}$, then
\begin{equation*}
\begin{split}
&\lim_{k\to+\infty}\left(\Vert\widetilde{\psi}_{\mathcal{O}_k}\Vert_{L^2}+\Vert\widetilde{\varphi}_{\mathcal{O}_k}\Vert_{L^2}\right)=0,\\
&\lim_{k\to+\infty}\langle \widetilde{\psi}_{\mathcal{O}_k},\widetilde{\varphi}_{\mathcal{O}_k}\rangle_{H^1\times H^1(\mathbb{T}^3)}=\langle \psi,\varphi\rangle_{\dot{H}^1\times\dot{H}^1(\mathbb{R}^3)}.
\end{split}
\end{equation*}
\end{lemma}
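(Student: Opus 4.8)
The plan is to prove the three assertions of Lemma \ref{EquivFrames} by reducing everything to explicit computations with the operators $T_N$ and $\Pi_{t_0,x_0}$, splitting in each case according to whether the relevant time parameter lies inside or outside the Euclidean window $|t|\lesssim N^{-2}$.

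\textbf{Part (i).} Suppose $\mathcal{O}=(N_k,t_k,x_k)_k$ and $\mathcal{O}'=(M_k,s_k,y_k)_k$ are equivalent, so that (after passing to a subsequence) $N_k/M_k\to\lambda\in(0,\infty)$, $N_k^2(t_k-s_k)\to a$, and $N_k(x_k-y_k)\to v$ for some $a\in\mathbb{R}$, $v\in\mathbb{R}^3$. The natural candidate isometry is $T\psi = e^{ia\lambda^{-2}\Delta_{\mathbb{R}^3}}[\lambda^{1/2}(\pi_{v}\psi)(\lambda\,\cdot)]$ (a composition of the $\dot H^1$-invariant dilation by $\lambda$, the translation by $v$, and the Euclidean linear flow for time $a\lambda^{-2}$), which is manifestly an isometry of $\dot H^1(\mathbb{R}^3)$. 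The content is then that $\widetilde{(T\psi)}_{\mathcal{O}_k}-\widetilde{\psi}_{\mathcal{O}'_k}\to 0$ in $H^1(\mathbb{T}^3)$. I would first reduce to $\psi\in C_0^\infty(\mathbb{R}^3)$ by density and the uniform bound \eqref{TN2}. Then, writing both profiles as rescalings concentrating near (approximately) the same point in $\mathbb{T}^3$ at scale $N_k^{-1/2}$, I would transfer the whole comparison to $\mathbb{R}^3$: both functions, viewed in Euclidean coordinates rescaled by $N_k$, converge strongly in $\dot H^1(\mathbb{R}^3)$ to $T\psi$ and to the corresponding limit of $\widetilde\psi_{\mathcal{O}'_k}$ respectively, using that the cutoffs $\eta(N_k^{-1/2}\,\cdot)$ tend to $1$ pointwise, that the dilation factors $N_k/M_k\to\lambda$, the spatial shift $N_k(x_k-y_k)\to v$, and that the free evolutions $e^{i(t_k-s_k)\Delta}$ on $\mathbb{T}^3$ at these tiny times, after rescaling, approximate $e^{ia\lambda^{-2}\Delta_{\mathbb{R}^3}}$ in the strong $\dot H^1$ topology on compact sets — this last point is exactly the comparison of periodic and Euclidean linear flows of Lemma \ref{step1}(with $\rho=0$) together with \eqref{ELiL1}. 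Collecting the errors and using that the $\dot H^1$ norm of the difference is computed locally (the supports shrink), \eqref{equiv} follows.

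\textbf{Part (ii) and (iii).} For (iii), the $L^2$ statement is immediate: $\|\widetilde\psi_{\mathcal{O}_k}\|_{L^2(\mathbb{T}^3)}=\|T_{N_k}\psi\|_{L^2}\lesssim N_k^{-1}\|\psi\|_{\dot H^1}\to 0$ since $T_N$ is an $\dot H^1$-to-$\dot L^2$ contraction that gains a factor $N^{-1}$ (this is the scaling of the $\dot H^1$-critical rescaling in three dimensions). For the $H^1$ inner product in (iii), since the $L^2$ parts vanish it equals the $\dot H^1$ inner product up to $o(1)$, and a change of variables to the Euclidean coordinate $y=N_k\Psi^{-1}(x)$ turns $\langle\widetilde\psi_{\mathcal{O}_k},\widetilde\varphi_{\mathcal{O}_k}\rangle_{\dot H^1(\mathbb{T}^3)}$ into $\langle e^{-it_kN_k^2\Delta_{\mathbb{R}^3}}(\eta(N_k^{-1/2}\cdot)\psi),\, e^{-it_kN_k^2\Delta_{\mathbb{R}^3}}(\eta(N_k^{-1/2}\cdot)\varphi)\rangle_{\dot H^1(\mathbb{R}^3)}+o(1)$ — here one must again invoke the approximation of the $\mathbb{T}^3$ linear flow by the $\mathbb{R}^3$ flow (Lemma \ref{step1}), valid because after rescaling the time is $N_k^2 t_k$, but $t_k\to 0$ so either $t_k=0$ or we are still in a regime controlled by the lemma — and since the Euclidean flow is unitary on $\dot H^1$ and $\eta(N_k^{-1/2}\,\cdot)\to1$, this converges to $\langle\psi,\varphi\rangle_{\dot H^1(\mathbb{R}^3)}$. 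For (ii), orthogonality of the frames means one of $|\ln(N_k/M_k)|$, $N_k^2|t_k-s_k|$, $N_k|x_k-y_k|$ diverges; after the same reduction to $\mathbb{R}^3$ via Lemma \ref{step1} and \eqref{ELiL1}, the first inner product becomes (approximately) a Euclidean pairing of two profiles living at orthogonal Euclidean frames, which tends to zero by the standard Euclidean orthogonality computation (scaling mismatch forces weak-$\ast$ decay; spatial separation forces disjoint essential supports; time separation forces vanishing via the dispersive decay / oscillation of $e^{it\Delta_{\mathbb{R}^3}}$ tested against a fixed profile). The $L^6$-type statement $\langle|\widetilde\psi_{\mathcal{O}_k}|^3,|\widetilde\varphi_{\mathcal{O}'_k}|^3\rangle_{L^2}\to 0$ follows the same way, using Hölder and the $L^6$ Sobolev embedding together with the fact that the rescaled profiles have, in the limit, either incompatible concentration scales or disjoint supports in space-time.

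\textbf{Main obstacle.} The only genuinely delicate point, and the one I would spend the most care on, is the passage from the periodic linear flow $e^{it\Delta}$ on $\mathbb{T}^3$ to the Euclidean flow $e^{it\Delta_{\mathbb{R}^3}}$ at the concentration scale — specifically controlling it with quantitative errors uniformly in the frame parameters and handling both the case $t_k=0$ and the case $N_k^2|t_k|\to\infty$ (the two alternatives built into the definition of a Euclidean frame). In the first case everything takes place strictly inside the Euclidean window and Lemma \ref{step1} with $\rho=0$ applies directly; in the second, one first lets the Euclidean window do its work and then uses the extinction-type bound \eqref{ELiL1} to see that the piece outside the window contributes negligibly to all the quantities in question. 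Keeping these two regimes cleanly separated, and verifying that the cutoff $\eta(N^{-1/2}\,\cdot)$ truly costs nothing (which is why $Q_N$ uses the scale $N^{1/2}$ rather than $N$), is where the real work lies; everything else is bookkeeping that parallels \cite[Lemma 5.4]{IoPaSt} line by line.
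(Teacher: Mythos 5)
Your overall route is the one the paper intends: the paper omits the proof, stating only that it follows \cite[Lemma 5.4]{IoPaSt} using Lemma \ref{step1} with $\rho=0$ inside the Euclidean window and \eqref{ELiL1} outside it, and your outline for (i) and (ii) is exactly that scheme (identify the limiting dilation/translation/time parameters for equivalent frames, build $T$ as a composition of an $\dot H^1$-invariant dilation, a translation and a Euclidean linear flow; for orthogonal frames run the standard scale/space/time separation trichotomy after transferring to $\mathbb{R}^3$).

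Two corrections are needed, one of substance. In part (iii) you justify the passage from the periodic to the Euclidean flow by saying that ``$t_k\to 0$ so either $t_k=0$ or we are still in a regime controlled by the lemma'': this is not valid, since for a frame with $N_k^2|t_k|\to\infty$ the rescaled time $N_k^2 t_k$ leaves every fixed window $(-T_0,T_0)$, and Lemma \ref{step1} only compares the two flows for $|t|\le T_0N_k^{-2}$. Fortunately no flow comparison is needed there at all: both profiles are built from the \emph{same} frame, so $\pi_{x_k}$ and $e^{-it_k\Delta}$ are unitary on $H^1(\mathbb{T}^3)$ and commute with $\nabla$, hence
\begin{equation*}
\langle \widetilde{\psi}_{\mathcal{O}_k},\widetilde{\varphi}_{\mathcal{O}_k}\rangle_{H^1(\mathbb{T}^3)}=\langle T_{N_k}\psi,T_{N_k}\varphi\rangle_{H^1(\mathbb{T}^3)},
\end{equation*}
and an exact change of variables (the rescaling in \eqref{TN} is $\dot H^1$-invariant and supported in the unit ball) together with $\eta(\cdot/N_k^{1/2})\psi\to\psi$ in $\dot H^1$ gives the limit $\langle\psi,\varphi\rangle_{\dot H^1(\mathbb{R}^3)}$ once the $L^2$ parts are shown to vanish; you should replace your argument for (iii) by this. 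Secondly, the $L^2$ bound you quote is off by a square root: from $\|T_N\psi\|_{L^2(\mathbb{T}^3)}=N^{-1}\|\eta(\cdot/N^{1/2})\psi\|_{L^2(\mathbb{R}^3)}$ and H\"older--Sobolev on the ball of radius $2N^{1/2}$ one gets $\|T_N\psi\|_{L^2}\lesssim N^{-1/2}\|\psi\|_{\dot H^1}$, not $N^{-1}$; this still tends to $0$, so the conclusion stands, but the stated rate (and the reason the cutoff in \eqref{rescaled} is taken at scale $N^{1/2}$) should be corrected. With these repairs your proposal matches the intended proof; the remaining case analysis in (ii), in particular the time-separated case handled via \eqref{ELiL1} after reducing to smooth compactly supported profiles, is the same bookkeeping as in \cite[Lemma 5.4]{IoPaSt}.
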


\begin{definition}\label{absent}
We say that a sequence of functions $\{f_k\}_k\subseteq H^1(\T^3)$ is absent from a frame $\mathcal{O}$ if, up to a subsequence, for every profile $\psi_{\mathcal{O}_k}$ associated to $\mathcal{O}$,
\begin{equation*}
\int_{\mathbb{T}^3}\left(f_k\overline{\widetilde{\psi}}_{\mathcal{O}_k}+\nabla f_k\nabla\overline{\widetilde{\psi}}_{\mathcal{O}_k}\right)dx\to0
\end{equation*}
as $k\to+\infty$.
\end{definition}

Note in particular that a profile associated to a frame $\mathcal{O}$ is absent from any frame orthogonal to $\mathcal{O}$.

The following proposition is the core of this section. Its proof is similar to the proof of \cite[Proposition 5.5]{IoPa}, and is omitted.

\begin{proposition}\label{PD}
Consider $\{f_k\}_k$ a sequence of functions in $H^1(\mathbb{T}^3)$ satisfying
\begin{equation}\label{FkBoundedPD}
\limsup_{k\to+\infty}\Vert f_k\Vert_{H^1(\mathbb{T}^3)}\lesssim E
\end{equation}
and a sequence of intervals $I_k=(-T_k,T^k)$ such that $\vert I_k\vert\to0$ as $k\to+\infty$\footnote{The condition \eqref{smallnessPD} on the smallness of the remainder $R_k^J$ depends on both the sequence of functions $f_k$ and the sequence of intervals $I_k$. The existence of both these sequences is a consequence of the contradiction assumption $E_{max}<\infty$ in Theorem \ref{Alex40}.}. Up to passing to a subsequence, assume that $f_k\rightharpoonup g\in H^1(\mathbb{T}^3)$.
There exists a sequence of profiles $\widetilde{\psi}^\alpha_{\mathcal{O}^\alpha_k}$ associated to pairwise orthogonal Euclidean frames $\mathcal{O}^\alpha$ such that, after extracting a subsequence, for every $J\ge 0$
\begin{equation}\label{DecompositionPD}
f_k=g+\sum_{1\le \alpha\le J}\widetilde{\psi}^\alpha_{\mathcal{O}^\alpha_k}+R_k^J
\end{equation}
where $R_k^J$ is absent from the frames $\mathcal{O}^\alpha$, $\alpha\le J$ and is small in the sense that
\begin{equation}\label{smallnessPD}
\limsup_{J\to+\infty}\limsup_{k\to+\infty}\big[\sup_{N\ge 1,t\in I_k,\,x\in\mathbb{T}^3}N^{-\frac{1}{2}}\left\vert \left(e^{it\Delta}P_NR_k^J\right)(x)\right\vert\big]=0.
\end{equation}
Besides, we also have the following orthogonality relations
\begin{equation}\label{OrthogonalityPD}
\begin{split}
&\Vert f_k\Vert_{L^2}^2=\Vert g\Vert_{L^2}^2+\Vert R_k^J\Vert_{L^2}^2+o_k(1),\\
&\Vert \nabla f_k\Vert_{L^2}^2=\Vert \nabla g\Vert_{L^2}^2+\sum_{\alpha\le J}\Vert\nabla_{\R^3}\psi^\alpha\Vert_{L^2(\mathbb{R}^3)}^2+\Vert\nabla R_k^J\Vert_{L^2}^2+o_k(1),\\
&\lim_{J\to+\infty}\limsup_{k\to+\infty}\left\vert\Vert f_k\Vert_{L^6}^6-\Vert g\Vert_{L^6}^6-\sum_{\alpha\le J}\Vert\widetilde{\varphi}^\alpha_{\mathcal{O}^\alpha_k}\Vert_{L^6}^6\right\vert=0,
\end{split}
\end{equation}
where $o_k(1)\to0$ as $k\to+\infty$, possibly depending on $J$.
\end{proposition}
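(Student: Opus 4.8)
The plan is to run the standard inductive profile-extraction scheme of Keraani, adapted to the periodic geometry exactly as in \cite[Proposition 5.5]{IoPa} and \cite[Section 5]{IoPaSt}. First I would introduce the relevant "defect of compactness" functional. Since the small-remainder condition \eqref{smallnessPD} is measured by the quantity $\sup_{N\geq 1,\,t\in I_k,\,x\in\T^3}N^{-1/2}|(e^{it\Delta}P_N h)(x)|$, I would set, for a sequence $\{h_k\}$,
\begin{equation*}
\Lambda(\{h_k\}):=\limsup_{k\to\infty}\sup_{N\geq1,\,t\in I_k,\,x\in\T^3}N^{-1/2}\big|(e^{it\Delta}P_N h_k)(x)\big|.
\end{equation*}
Replacing $f_k$ by $f_k-g$ (which is bounded in $H^1$ and converges weakly to $0$), the core claim is that from any such sequence with $\Lambda>0$ one can extract a single Euclidean profile carrying a fixed fraction of this quantity. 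Concretely: if $\Lambda(\{f_k-g\})=\eta>0$, there are indices $k$, scales $N_k\to\infty$ (the case where $N_k$ stays bounded must be excluded using $f_k-g\rightharpoonup 0$, because a bounded-frequency, $H^1$-bounded, weakly null sequence has $e^{it\Delta}P_{\leq C}(f_k-g)\to0$ uniformly by Rellich and the fact that $|I_k|\to0$), times $t_k\in I_k$ (so $t_k\to0$), and points $x_k$ such that $N_k^{-1/2}|(e^{it_k\Delta}P_{N_k}(f_k-g))(x_k)|\gtrsim\eta$. Passing to the rescaled functions $N_k^{-1/2}(\pi_{-x_k}e^{-it_k\Delta}(f_k-g))(N_k^{-1}\cdot)$ on $\R^3$ (via $\Psi$), these are bounded in $\dot H^1(\R^3)$ and, after a subsequence, converge weakly to some $\psi\in\dot H^1(\R^3)$ with $\|\psi\|_{\dot H^1(\R^3)}\gtrsim\eta$ (the lower bound comes from testing against a bump at the concentration point; one checks $t_k=0$ or $N_k^2|t_k|\to\infty$ after a further subsequence, so that $\mathcal{O}:=(N_k,t_k,x_k)_k$ is a genuine Euclidean frame). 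This $(\psi,\mathcal{O})$ is the extracted profile.

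Next I would set up the induction. Put $R_k^0:=f_k-g$. Given $R_k^{J-1}$ absent from $\mathcal{O}^1,\dots,\mathcal{O}^{J-1}$, if $\Lambda(\{R_k^{J-1}\})=0$ we stop; otherwise apply the extraction step to obtain $(\psi^J,\mathcal{O}^J)$ and set $R_k^J:=R_k^{J-1}-\widetilde{\psi}^J_{\mathcal{O}^J_k}$. One must verify (a) the new frame $\mathcal{O}^J$ is orthogonal to all previous ones — this follows because $R_k^{J-1}$ is absent from $\mathcal{O}^1,\dots,\mathcal{O}^{J-1}$, so a non-orthogonal new frame would, by Lemma \ref{EquivFrames}(i) together with the definition of absence, force $\psi^J=0$, a contradiction; and (b) $R_k^J$ is again absent from $\mathcal{O}^1,\dots,\mathcal{O}^J$ — absence from $\mathcal{O}^J$ is built into the construction (the weak limit of the rescaled $R_k^J$ along $\mathcal{O}^J$ is $\psi^J-\psi^J=0$), and absence from the earlier frames is preserved since profiles on orthogonal frames are mutually absent by the remark after Definition \ref{absent}. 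The $\dot H^1$-orthogonality identity in \eqref{OrthogonalityPD}, proved via Lemma \ref{EquivFrames}(ii)--(iii) (expand $\|\nabla(f_k-g)\|_{L^2}^2=\sum_{\alpha\leq J}\|\nabla\widetilde{\psi}^\alpha_{\mathcal{O}^\alpha_k}\|_{L^2}^2+\|\nabla R_k^J\|_{L^2}^2+o_k(1)$ using that cross terms vanish by orthogonality and by absence of $R_k^J$), together with $\|\nabla\widetilde{\psi}^\alpha_{\mathcal{O}^\alpha_k}\|_{L^2(\T^3)}\to\|\nabla_{\R^3}\psi^\alpha\|_{L^2(\R^3)}$, shows $\sum_\alpha\|\nabla_{\R^3}\psi^\alpha\|_{L^2}^2\lesssim E^2$, hence $\|\nabla_{\R^3}\psi^\alpha\|_{L^2}\to0$ and therefore $\|\nabla_{\R^3}\psi^J\|_{L^2}\gtrsim\Lambda(\{R_k^{J-1}\})\to0$ as $J\to\infty$. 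The $L^2$-orthogonality is immediate since all $\|\widetilde{\psi}^\alpha_{\mathcal{O}^\alpha_k}\|_{L^2}\to0$ by Lemma \ref{EquivFrames}(iii) (this is where the periodic scaling $N^{1/2}$ in $\dot H^1$, not $L^2$, is used), and the $L^6$ decoupling follows from a Brezis--Lieb / orthogonality argument for $L^6$ combined with Lemma \ref{EquivFrames}(ii).

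The remaining point is to deduce the smallness \eqref{smallnessPD}, i.e. $\lim_{J\to\infty}\Lambda(\{R_k^J\})=0$. This is exactly the quantitative counterpart of the classical "inverse Strichartz" mechanism: the extracted $\psi^J$ satisfies $\|\nabla_{\R^3}\psi^J\|_{L^2}\gtrsim\Lambda(\{R_k^{J-1}\})$ up to a universal power, while $\sum_J\|\nabla_{\R^3}\psi^J\|_{L^2}^2<\infty$ by the $\dot H^1$-orthogonality just established; since $\Lambda(\{R_k^J\})\leq\Lambda(\{R_k^{J-1}\})$ is non-increasing in $J$, summability forces it to $0$. The main obstacle — and the step requiring genuine work — is the extraction step itself: upgrading the bound $N_k^{-1/2}|(e^{it_k\Delta}P_{N_k}(R_k^{J-1}))(x_k)|\gtrsim\eta$ to a lower bound $\|\psi^J\|_{\dot H^1(\R^3)}\gtrsim\eta^{\theta}$ for the weak limit of the rescaled profiles. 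This is where one must pass from the torus to $\R^3$ correctly (handling the cutoff $\eta(\cdot/N_k^{1/2})$ implicit in $T_{N_k}$, so that the weak $\dot H^1(\R^3)$-limit is well defined and the pairing with a test bump detects a fixed fraction of $\eta$), and where the frame condition "$t_k=0$ or $N_k^2|t_k|\to\infty$" is secured by a pigeonhole on a further subsequence. Given the detailed treatments in \cite[Section 5]{IoPa} and \cite[Section 5]{IoPaSt}, all of these steps transfer with only cosmetic changes, which is why the proof can be omitted in the paper; above I have indicated precisely which earlier lemmas (Lemma \ref{EquivFrames}, the remark after Definition \ref{absent}, and the estimate \eqref{ELiL1}) supply each ingredient.
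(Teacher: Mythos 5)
Your proposal runs the same argument the paper relies on (it omits the proof, deferring to \cite[Proposition 5.5]{IoPa}): the Keraani-type inductive extraction driven by the defect functional $\sup_{N\ge1,\,t\in I_k,\,x\in\T^3}N^{-1/2}|(e^{it\Delta}P_NR_k^J)(x)|$, with frame orthogonality, absence, and the energy/mass/$L^6$ decoupling handled via Lemma \ref{EquivFrames}, so it is essentially the paper's intended proof. One small remark: the monotonicity of the defect in $J$ that you invoke at the end is neither needed nor obvious; the extraction lower bound $\|\nabla_{\R^3}\psi^{J+1}\|_{L^2(\R^3)}\gtrsim \Lambda(\{R_k^{J}\})$ together with $\sum_\alpha\|\nabla_{\R^3}\psi^\alpha\|_{L^2(\R^3)}^2\lesssim E^2$ already forces \eqref{smallnessPD}.
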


%%%%%%%%%%%%%%%%%%%%%%%%%%%%%%%%%%%%%%%%%%%%%%%%%%%%%%%%%%%%%%%%%%%%%%%%%%%%%%%%%%%%%%%%%%%%%%%%%%%%%%%%%%%%%%%%%%%%%%%%%%%%%%%%%%%%%%%%%%%%%%%%%%%%%%%%%%%%%%%%%%%%%%%%%%%%%%%%%%%%%%%%%%%%%%%%%%%%%%%%%%%%%%%%%%%%%%%%%%%%%%%%%%%%%%%%%%%%%%%%%%%%%%%%%%%%%%%%%%%%%%%%%%%%%%%%%%%%%%%%%%%%%%%%%%%%%%%%%%%%%%%%%%%%%%%%%%%%%%%%%%%%%%%%%%%%%%%%%%%%%%%%%%%%%%%%%%%%%%%%%%%%%%%%%%%%%%%%%%%%%%%%%%%%%%%%%%%%%%%%%%%%%%%%%%%%%%%%%%%%%%%%%%%%%%%%

\section{Proof of the main theorem}\label{proofthm}

From Proposition \ref{LWP}, we see that to prove Theorem \ref{Main1}, it suffices to prove that solutions remain bounded in $Z$ on intervals of length at most $1$. To obtain this, we induct on the energy $E(u)$.

Define
\begin{equation*}
\Lambda(L,\tau)=\sup\{\Vert u\Vert_{Z(I)}^2,E(u)\le L,\vert I\vert\le \tau\}
\end{equation*}
where the supremum is taken over all strong solutions of
\eqref{eq1} of energy less than or equal to $L$ and all intervals $I$ of length $\vert I\vert\le \tau$. Clearly, $\Lambda$ is an increasing function of both its arguments and moreover,
\begin{equation*}
\Lambda(L,\tau+\sigma)\lesssim\Lambda(L,\tau)+\Lambda(L,\sigma).
\end{equation*}
Hence we may define
\begin{equation*}
\Lambda_\ast(L)=\lim_{\tau\to 0}\Lambda(L,\tau)
\end{equation*}
and we have that for all $\tau$,
\begin{equation*}
\Lambda(L,\tau)<+\infty\Leftrightarrow \Lambda_\ast(L)<+\infty.
\end{equation*}
Finally, we define
\begin{equation}\label{Emax}
E_{max}=\sup\{L: \Lambda_\ast(L)<+\infty\}.
\end{equation}
We see that Theorem \ref{Main1} is equivalent to the following statement.

\begin{theorem}\label{Alex40}
$E_{max}=+\infty$. In particular every solution of \eqref{eq1} is global.
\end{theorem}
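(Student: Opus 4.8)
The plan is to argue by contradiction, following the concentration-compactness / rigidity strategy and assume $E_{max}<+\infty$. If $E_{max}<+\infty$, then by definition of $\Lambda_*$ and $E_{max}$ one can find a sequence of strong solutions $u_k$ on intervals $I_k=(-T_k,T^k)$ with $E(u_k)\to E_{max}$, $\|u_k\|_{Z(I_k)}\to+\infty$, and $|I_k|\to 0$ (the last point uses the superadditivity $\Lambda(L,\tau+\sigma)\lesssim\Lambda(L,\tau)+\Lambda(L,\sigma)$, which forces the blow-up of the $Z$-norm to happen on arbitrarily short intervals once $\Lambda_*(E_{max})=+\infty$). Normalizing time translations we may assume $0\in I_k$; setting $f_k=u_k(0)$ we get a bounded sequence in $H^1(\T^3)$ with $\limsup_k\|f_k\|_{H^1}^2\le 2E_{max}$ (roughly), and we apply the profile decomposition, Proposition \ref{PD}, to $(f_k, I_k)$. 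This yields $f_k=g+\sum_{\alpha\le J}\widetilde\psi^\alpha_{\mathcal O^\alpha_k}+R^J_k$ with the orthogonality relations \eqref{OrthogonalityPD} and the smallness \eqref{smallnessPD} of the remainder in the relevant weak norm.

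The next step is a dichotomy. \emph{Case 1: only one profile and no weak limit}, i.e.\ $g=0$, exactly one $\alpha$, and the $H^1$ mass of $f_k$ is asymptotically concentrated on that single Euclidean profile, which then must carry energy exactly $E_{max}$. In this case $f_k$ is (up to the orthogonality error) a single concentrating bubble $\Pi_{t_k,x_k}(T_{N_k}\phi)$, and Proposition \ref{GEForEP} describes the nonlinear solution $U_k$ with that data: it is bounded in $X^1$ (hence in $Z$) on a fixed-length interval, uniformly in $k$ — both inside the Euclidean window via Lemma \ref{step1} and outside it via the extinction Lemma \ref{Extinction} together with Proposition \ref{LWP}. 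By the stability Proposition \ref{Stabprop}, $u_k$ stays close to $U_k$ on $I_k$ (since $|I_k|\to 0$, $I_k$ eventually sits inside the interval of control), so $\|u_k\|_{Z(I_k)}$ is bounded, contradicting $\|u_k\|_{Z(I_k)}\to\infty$. \emph{Case 2: the nontrivial part of $f_k$ is genuinely split} — either $g\neq 0$, or there are at least two profiles, or there is one profile but it does not exhaust the energy (so $\liminf\|R^J_k\|$-energy $>0$ is not ruled out but more to the point each piece has energy strictly less than $E_{max}$). Here one builds an approximate solution $\widetilde u_k$ to \eqref{eq1} on $I_k$ by superposing: the nonlinear evolution of $g$ (which has energy $<E_{max}$, hence is controlled in $Z$ on short intervals by the induction hypothesis $\Lambda_*(E(g))<\infty$), the nonlinear Euclidean profiles $U^\alpha_k$ (each of energy $\le E_{max}-\delta$ for some $\delta>0$, controlled by Proposition \ref{GEForEP}), and the linear evolution $e^{it\Delta}R^J_k$ of the remainder. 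The orthogonality of the frames makes the cross terms in the error negligible, and the key point that the remainder contributes a negligible nonlinear interaction is exactly the content of the "lemma about approximate solutions" deferred to Section \ref{ProofLastLem}; granting that, $\|e\|_{N(I_k)}\to 0$, so Proposition \ref{Stabprop} again gives a true solution $u_k$ close to $\widetilde u_k$ with bounded $Z(I_k)$ norm — contradiction.

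The main obstacle is Case 2, and within it the control of the error term $e$ in the approximate solution: specifically, showing that $N(I_k)$-smallness of $e$ survives the nonlinear interaction between the dispersed remainder $e^{it\Delta}R^J_k$ and the concentrating profiles, and also between the Scale-$1$ pieces ($g$, and profiles with bounded $N_k$ — here nonexistent since all $N_k\to\infty$, but still the interaction with $g$ is genuinely Scale-$1$). As noted in the introduction, one cannot rule out Scale-$1$ interaction by a smoothing argument as in \cite{IoPa}; instead one exploits that $|I_k|\to 0$, so on $I_k$ a Scale-$1$ evolution has not had time to produce concentration, and the interaction terms carry a small power of $|I_k|$. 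The quantitative estimates needed — strong convergence of $\int_0^1|\nabla e^{it\Delta}R^J_k|^2\,dt$ against concentrating weights, and the refined error bound from Lemma \ref{AlmostSol} — are precisely what Section \ref{ProofLastLem} supplies; modulo that input, the argument above closes the induction and forces $E_{max}=+\infty$, which by Proposition \ref{LWP} gives global existence and completes the proof of Theorem \ref{Main1}.
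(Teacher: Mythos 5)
Your overall architecture (contradiction sequence with $|I_k|\to 0$, profile decomposition of $u_k(0)$, stability around a superposition of nonlinear profiles plus the linear remainder, with the error controlled by the deferred Lemma \ref{AlmostSol}) is the same as the paper's. But there is a genuine gap in your case analysis: you only isolate the case of a \emph{single Euclidean profile} carrying energy $E_{max}$, and then assert that in every remaining case ``each piece has energy strictly less than $E_{max}$''. That assertion is false in the Scale-$1$ concentration scenario in which there are no Euclidean profiles, the remainder carries no energy, and $u_k(0)$ converges strongly in $H^1$ to $g\neq 0$ with $E(g)=E_{max}$. Nothing in the orthogonality relations \eqref{OrthogonalityPD} rules this out, and in this scenario your Case 2 argument collapses at its first step: you control the nonlinear evolution $W$ of $g$ by invoking ``the induction hypothesis $\Lambda_\ast(E(g))<\infty$'', but $E(g)=E_{max}$ and $\Lambda_\ast(E_{max})$ is exactly the quantity that may be infinite under the contradiction hypothesis, so there is no induction hypothesis to invoke. (The same strict-gap assumption $E(\alpha),E(g)\le E_{max}-\eta$ is what legitimizes the uniform bound $\Vert W\Vert_{Z}+\Vert U^\alpha_k\Vert_{Z}\lesssim \Lambda(E_{max}-\eta/2,2)$; it is only available \emph{after} the full-concentration cases have been excluded.)

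The paper disposes of this missing case separately (its Case I) by an argument that does not use the induction on energy at all: since $g$ is a single fixed function, $\Vert e^{it\Delta}g\Vert_{Z(-\eta,\eta)}\le \delta_0$ once $\eta$ is small, and since $\Vert u_k(0)-g\Vert_{H^1}\to 0$ and $|I_k|\to 0$, Strichartz estimates give $\Vert e^{it\Delta}u_k(0)\Vert_{Z(I_k)}\le\delta_0$ for $k$ large, so the small-$Z$ local theory of Proposition \ref{LWP} bounds $\Vert u_k\Vert_{Z(I_k)}$ directly, contradicting the blow-up of the $Z$-norm. This is precisely the use of the shrinking intervals that you allude to informally (``a Scale-$1$ evolution has not had time to occur''), but in your write-up that heuristic is attached to the error estimates of Lemma \ref{AlmostSol} rather than deployed where it is actually needed, namely to handle strong $H^1$ concentration at the top energy. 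Adding this case (and noting that the remaining cases then genuinely have an energy gap $\eta>0$, so that the profiles and $W$ are controlled by $\Lambda(E_{max}-\eta/2,2)$) would close the argument and bring it in line with the paper's proof.
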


\begin{proof}[Proof of Theorem \ref{Alex40}] Suppose for contradiction that $E_{max}<+\infty$. From now on, all our constants are allowed to depend on $E_{max}$. By definition, there exists a sequence of solutions $u_k$ such that
\begin{equation}\label{CondForComp}
E(u_k)\to E_{max},\quad \Vert u_k\Vert_{Z(-T_k,0)},\Vert u_k\Vert_{Z(0,T^k)}\to+\infty
\end{equation}
for some $T_k,T^k\to0$ as $k\to+\infty$.
We now apply Proposition \ref{PD} to the sequence $\{u_k(0)\}_k$ with $I_k=(-T_k,T^k)$. This gives a decomposition
\begin{equation*}
u_k(0)=g+\sum_{1\le\alpha\le J}\widetilde{\psi}^\alpha_{\mathcal{O}^\alpha_k}+R^J_k.
\end{equation*}
We first consider the remainder and note that, for $p\in\{p_0,p_1\}$ and $q=(p_0+4)/2$,
\begin{equation*}
\begin{split}
\sum_N N^{5-p/2}&\Vert P_Ne^{it\Delta}R^J_k\Vert_{L^{p}_{t,x}(\T^3\times I_k)}^{p}\\
&\lesssim \big[\sup_NN^{-\frac{1}{2}}\Vert e^{it\Delta}P_N R^J_k\Vert_{L^\infty_{t,x}(\mathbb{T}^3\times I_k)}\big]^{p-q}\sum_N \left[N^{5/q-1/2}\Vert P_Ne^{it\Delta}R^J_k\Vert_{L^q_{t,x}(\mathbb{T}^3\times I_k)}\right]^q\\
&\lesssim \big[\sup_NN^{-\frac{1}{2}}\Vert e^{it\Delta}P_N R^J_k\Vert_{L^\infty_{t,x}(\mathbb{T}^3\times I_k)}\big]^{p-q}\sum_N N^q\Vert P_N R^J_k\Vert_{L^2}^q\\
&\lesssim\big[\sup_NN^{-\frac{1}{2}}\Vert e^{it\Delta}P_N R^J_k\Vert_{L^\infty_{t,x}(\mathbb{T}^3\times I_k)}\big]^{p-q}.
\end{split}
\end{equation*}
Therefore
\begin{equation}\label{SmallnessRterm}
\limsup_{J\to+\infty}\limsup_{k\to+\infty}\Vert e^{it\Delta}R^J_k\Vert_{Z(I_k)}=0.
\end{equation}

\medskip

{\bf Case I:} $\{u_k(0)\}_k$ converges strongly in $H^1(\mathbb{T}^3)$ to its limit $g$ which satisfies $E(g)=E_{max}$. Then, by Strichartz estimates, there exists $\eta>0$ such that, for $k$ large enough
\begin{equation*}
\Vert e^{it\Delta}u_k(0)\Vert_{Z(-T_k,T^k)}\le\Vert e^{it\Delta}g\Vert_{Z(-\eta,\eta)}+o_k(1)\le\delta_0,
\end{equation*}
where $\delta_0$ is given by the local theory in Proposition \ref{LWP}. In this case, we conclude that $\Vert u_k\Vert_{Z(-T_k,T^k)}\lesssim 2\delta_0$ which contradicts \eqref{CondForComp}.

\medskip

{\bf Case IIa:} $g=0$ and there are no profile. Then, taking $J$ sufficiently large, we get that
\begin{equation*}
\Vert e^{it\Delta}u_k(0)\Vert_{Z(I_k)}=\Vert e^{it\Delta}R^J_k\Vert_{Z(I_k)}\le\delta_0
\end{equation*}
where $\delta_0$ is as above. Once again, this contradicts \eqref{CondForComp}.

\medskip

{\bf Case IIb:} $g=0$ and there is only one Euclidean profile, such that
\begin{equation*}
u_k(0)=\widetilde{\psi}_{\mathcal{O}_k}+o_k(1)
\end{equation*}
in $H^1$ (see \eqref{SumOfL}), where $\mathcal{O}$ is a Euclidean frame. In this case, we let $U_k$ be the solution of \eqref{eq1} with initial data $U_k(0)=\widetilde{\psi}_{\mathcal{O}_k}$ and we use \eqref{ControlOnZNormForEP} to get, for $k$ large enough
\begin{equation*}
\Vert U_k\Vert_{Z(-T_k,T^k)}\le\Vert U_k\Vert_{Z(-\delta,\delta)}\lesssim 1\quad\text{and}\quad\lim_{k\to +\infty}\Vert U_k(0)-u_k(0)\Vert_{H^1}\to 0.
\end{equation*}
We may use Proposition \ref{Stabprop} to deduce that
\begin{equation*}
\Vert u_k\Vert_{Z(-T_k,T^k)}\lesssim \Vert u_k\Vert_{X^1(-T_k,T^k)}\lesssim 1
\end{equation*}
which contradicts \eqref{CondForComp}.

\medskip

{\bf Case III:} There exists at least one profile or $g\ne 0$. Using Lemma \ref{EquivFrames} and passing to a subsequence, we may renormalize every Euclidean profile, that is, up to passing to an equivalent profile, we may assume that for every Euclidean frame $\mathcal{O}^\alpha$, $\mathcal{O}^\alpha\in\widetilde{\mathcal{F}}_e$.
Besides, using Lemma \ref{EquivFrames} and passing to a subsequence once again, we may assume that for every $\alpha\ne\beta$,
either $N^\alpha_k/N^\beta_k+N^\beta_k/N^\alpha_k\to+\infty$ as $k\to+\infty$ or $N^\alpha_k=N^\beta_k$ for all $k$ and in this case, either $t^\alpha_k=t^\beta_k$ as $k\to+\infty$ or $(N^\alpha_k)^2\vert t^\alpha_k-t^\beta_k\vert \to+\infty$ as $k\to+\infty$.
Now for every linear profile $\widetilde{\psi}^\alpha_{\mathcal{O}^\alpha_k}$, we define the associated nonlinear profile $U^\alpha_k$ as the maximal solution of \eqref{eq1} with initial data $U^\alpha_k(0)=\widetilde{\psi}^\alpha_{\mathcal{O}^\alpha_k}$. A more precise description of each nonlinear profile is given by Proposition \ref{GEForEP}. Similarly, we define $W$ to be the nonlinear solution of \eqref{eq1} with initial data $g$.

From \eqref{OrthogonalityPD} we see that, after extracting a subsequence,
\begin{equation}\label{SumOfL}
\begin{split}
&E(\alpha):=\lim_{k\to+\infty}E(\widetilde{\psi}^\alpha_{\mathcal{O}^\alpha_k})\in(0,E_{max}],\\
&\lim_{J\to+\infty}\big[\sum_{1\le\alpha\le J}E(\alpha)+\lim_{k\to+\infty}E(R_k^J)\big]\le E_{max}-E(g).
\end{split}
\end{equation}
Up to relabeling the profiles, we can assume that for all $\alpha$, $E(\alpha)\le E(1)<E_{max}-\eta$, $E(g)<E_{max}-\eta$ for some $\eta>0$. Consequently, all the nonlinear profiles are global and satisfy
\begin{equation*}
\Vert W\Vert_{Z(-1,1)}+\Vert U^\alpha_k\Vert_{Z(-1,1)}\le 3\Lambda(E_{max}-\eta/2,2)\lesssim 1,
\end{equation*}
where from now on all the implicit constants are allowed to depend on $\Lambda(E_{max}-\eta/2,2)$. Using Proposition \ref{Stabprop} it follows that
\begin{equation}\label{BddX1}
\Vert W\Vert_{X^1(-1,1)}+\Vert U^\alpha_k\Vert_{X^1(-1,1)}\lesssim 1.
\end{equation}

For $J,k\geq 1$ we define
\begin{equation*}
U^J_{prof,k}:=W+\sum_{\al=1}^J U^\al_k.
\end{equation*}
We show first that there is a constant $Q\lesssim 1$ such that
\begin{equation}\label{bi1}
\Vert U^J_{prof,k}\Vert_{X^1(-1,1)}^2+\Vert W\Vert_{X^1(-1,1)}^2+\sum_{\al=1}^J\|U^\al_k\|_{X^1(-1,1)}^2+\sum_{\alpha=1}^J\Vert U^\alpha_k-e^{it\Delta}\widetilde{\psi}^\alpha_{\mathcal{O}^\alpha_k}\Vert_{X^1(-1,1)}\leq Q^2,
\end{equation}
uniformly in $J$, for all $k\ge k_0(J)$ sufficiently large. Indeed, a simple fixed point argument as in section \ref{localwp} shows that there exists $\delta_0>0$ such that if
\begin{equation*}
\Vert \phi\Vert_{H^1(\mathbb{T}^3)}=\delta\le\delta_0
\end{equation*}
then the unique strong solution of \eqref{eq1} with initial data $\phi$ is global and satisfies
\begin{equation}\label{SmalldataCCL}
\begin{split}
\Vert u\Vert_{X^1(-2,2)}&\le 2\delta\quad\text{and}\quad\Vert u-e^{it\Delta}\phi\Vert_{X^1(-2,2)}\lesssim \delta^4.
\end{split}
\end{equation}
From \eqref{SumOfL}, we know that there are only finitely many profiles such that $E(\alpha)\geq\delta_0/2$. Without loss of generality, we may assume that for all $\alpha\ge A$, $E(\alpha)\leq\delta_0$. Using \eqref{OrthogonalityPD}, \eqref{BddX1}, and \eqref{SmalldataCCL} we then see that
\begin{equation*}
\begin{split}
&\Vert U^J_{prof,k}\Vert_{X^1(-1,1)}=\Vert  W+\sum_{1\le\alpha\le J}U^\alpha_k\Vert_{X^1(-1,1)}\\
&\le \Vert W\Vert_{X^1(-1,1)}+\sum_{1\le\alpha\le A}\Vert U^\alpha_k\Vert_{X^1(-1,1)}+\Vert \sum_{A\le\alpha\le J}(U^\alpha_k-e^{it\Delta}U^\alpha_k(0))\Vert_{X^1(-1,1)}\\
&+\Vert e^{it\Delta}\sum_{A\le\alpha\le J}U^\alpha_k(0)\Vert_{X^1(-1,1)}\\
&\lesssim 1+A+\sum_{A\le\alpha\le J}E(\alpha)+\Vert\sum_{A\le\alpha\le J}U^\alpha_k(0)\Vert_{H^1}\lesssim 1.
\end{split}
\end{equation*}
The bound on $\sum_{\al=1}^J\|U^\al_k\|_{X^1(-1,1)}^2$ is similar (in fact easier), which gives \eqref{bi1}.

We now claim that
\begin{equation*}
U^J_{app,k}=W+\sum_{1\le\alpha\le J}U^\alpha_k+e^{it\Delta}R^J_k
\end{equation*}
is an approximate solution for all $J\ge J_0$ and all $k\ge k_0(J)$ sufficiently large. We saw in \eqref{bi1} that $U^J_{app,k}$ has bounded $X^1$-norm. Let $\varepsilon=\varepsilon (2Q^2)$ be the constant given in Proposition \ref{Stabprop}. We compute
\begin{equation*}
\begin{split}
e&=\left(i\partial_t+\Delta\right)U^J_{app,k}-F(U^J_{app,k})=F(W)+\sum_{1\le\alpha\le J}F(U^\alpha_k)-F(U^J_{app,k})\\
&=F(U^J_{prof,k})-F(U^J_{prof,k}+e^{it\Delta}R^J_k)+F(W)+\sum_{1\le\alpha\le J}F(U^\alpha_k)-F(U^J_{prof,k}).
\end{split}
\end{equation*}
and appealing to Lemma \ref{AlmostSol} below, we obtain that
\begin{equation*}
\limsup_{k\to+\infty}\Vert e\Vert_{N(I_k)}\le\varepsilon/2
\end{equation*}
for $J\ge J_0(\varepsilon)$. In this case, we may use Proposition \ref{Stabprop} to conclude that $u_k$ satisfies
\begin{equation*}
\Vert u_k\Vert_{X^1(I_k)}\lesssim \Vert U^J_{app,k}\Vert_{X^1(I_k)}\le \Vert U^J_{prof,k}\Vert_{X^1(-1,1)}+\Vert e^{it\Delta}R^J_k\Vert_{X^1(-1,1)}\lesssim 1
\end{equation*}
which contradicts \eqref{CondForComp}. This finishes the proof.
\end{proof}

We have now proved our main theorem, except for the following important assertion.

\begin{lemma}\label{AlmostSol}

With the notation in {\bf Case III} of the proof of Theorem \ref{Alex40}, we have that, for fixed $J$,
\begin{equation}\label{Asol1}
\limsup_{k\to+\infty}\Vert F(U^J_{prof,k})-F(W)-\sum_{1\le\alpha\le J}F(U^\alpha_k)\Vert_{N(I_k)}=0.
\end{equation}
Besides, we also have that
\begin{equation}\label{Asol2}
\limsup_{J\to+\infty}\limsup_{k\to+\infty}\Vert F(U^J_{prof,k}+e^{it\Delta}R^J_k)-F(U^J_{prof,k})\Vert_{N(I_k)}=0.
\end{equation}
\end{lemma}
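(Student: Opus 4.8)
The plan is to expand both differences using the multilinearity of $F$ and then estimate each cross-term by Lemma \ref{NLEst2}, exploiting the orthogonality of the frames $\mathcal{O}^\alpha$ and the smallness of $e^{it\Delta}R^J_k$ in $Z(I_k)$. For \eqref{Asol1}, write $F(z)=\mathfrak{O}_{2,3}(z,\bar z)$-type quintilinear expression and expand $F\big(W+\sum_{\alpha\le J}U^\alpha_k\big)$ into $F(W)+\sum_{\alpha}F(U^\alpha_k)$ plus a finite sum (bounded in terms of $J$ only) of genuinely mixed quintilinear terms, each of which contains at least two distinct profiles among $\{W,U^1_k,\dots,U^J_k\}$. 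By Lemma \ref{NLEst2} each such term is controlled by a product of one $X^1(I_k)$-norm and four $Z'(I_k)$-norms of the constituent functions; since all these norms are $\lesssim 1$ by \eqref{bi1} and \eqref{BddX1}, it suffices to show each mixed term tends to $0$. For this I would use Proposition \ref{GEForEP}: each $U^\alpha_k$ is, up to an error that is small in $X^1$ on the relevant time interval, a concentrating/Euclidean bubble (or a scattered linear evolution $\Pi_{t^\alpha_k-t,x^\alpha_k}T_{N^\alpha_k}\phi^{\pm\infty}$ outside the Euclidean window), and likewise $W$ is a fixed Scale-$1$ solution. The orthogonality of the frames then forces the supports in phase space / physical space / time of any two distinct bubbles to separate, so a bilinear (really, just an $L^p_{x,t}$ Hölder) estimate shows the product of any two of them, measured in the norm dual to what Lemma \ref{NLEst2} needs, goes to $0$ as $k\to\infty$. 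Concretely I would reduce to smooth compactly supported Euclidean profiles (density), approximate each $U^\alpha_k$ by a truncated rescaled Euclidean solution via \eqref{ProxyEuclHyp}, and then estimate the product of two rescaled bubbles with orthogonal frames directly by change of variables, as in \cite[Section 5]{IoPa} or \cite[Section 5]{IoPaSt}.

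For \eqref{Asol2}, expand $F(U^J_{prof,k}+e^{it\Delta}R^J_k)-F(U^J_{prof,k})$ as a sum of quintilinear terms each of which contains at least one factor equal to $e^{it\Delta}R^J_k$. Apply Lemma \ref{NLEst2} again: the resulting bound is a sum of products in which $e^{it\Delta}R^J_k$ appears either in an $X^1(I_k)$ slot or in a $Z'(I_k)=Z^{1/2}X^{1,1/2}$ slot. In every case the bound contains at least one factor of $\|e^{it\Delta}R^J_k\|_{Z(I_k)}$ (since $Z'\le Z^{1/2}X^{1/2}$ and $X^1(I_k)$ of $e^{it\Delta}R^J_k$ is bounded by the energy), while the remaining factors are $X^1(I_k)$- and $Z'(I_k)$-norms of $U^J_{prof,k}$ and of $e^{it\Delta}R^J_k$, all $\lesssim_J 1$ uniformly in $k$ by \eqref{bi1}. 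Hence the whole expression is $\lesssim_J \|e^{it\Delta}R^J_k\|_{Z(I_k)}^{\theta}$ for some $\theta>0$, and \eqref{SmallnessRterm} gives $\limsup_J\limsup_k$ equal to $0$. One technical wrinkle: the implicit constant in \eqref{NLEst3}/Lemma \ref{NLEst2} depends on the frequency-overlap parameter $D$, so to avoid a $J$-dependent blow-up one must organize the Littlewood–Paley pieces so that the "high" frequency is always assigned to the $X^1$ factor; this is exactly the point of the precise estimate \eqref{NLEst3}, and I would invoke it in the form already used to derive Lemma \ref{NLEst2}.

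The main obstacle is \eqref{Asol1} — specifically, showing that each mixed profile–profile term vanishes as $k\to\infty$. There are several regimes to handle: (a) two Euclidean bubbles with orthogonal frames having the same scale $N^\alpha_k=N^\beta_k$ but separating either in space ($N_k|x^\alpha_k-x^\beta_k|\to\infty$) or in time ($N_k^2|t^\alpha_k-t^\beta_k|\to\infty$); (b) two bubbles with $N^\alpha_k/N^\beta_k\to\infty$, where one bubble is, on the time-scale of the other, either a fixed concentrated datum or a dispersed linear wave; and (c) interaction of a bubble with the Scale-$1$ solution $W$, where one uses that $\|U^\alpha_k\|_{Z(I_k)}$ or the relevant $L^p_{x,t}$ norm of the rescaled bubble is small because $|I_k|\to 0$ and the bubble lives at frequency $\sim N^\alpha_k\to\infty$ (cf. \eqref{Alex19}, which weights high frequencies). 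Each regime is handled by an explicit scaling computation once the profiles are replaced by their smooth truncated Euclidean models via Proposition \ref{GEForEP} and Lemma \ref{step1}; the bookkeeping (finitely many terms for fixed $J$, uniform-in-$k$ bounds) is routine but must be done carefully. This is precisely the step where the "better study of the error term" advertised in the introduction is needed, and it is carried out in detail in Section \ref{ProofLastLem}.
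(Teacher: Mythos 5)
There is a genuine gap, and it is precisely at the point where the paper needs its main new ingredient. For \eqref{Asol2}, your claim that ``in every case the bound contains at least one factor of $\Vert e^{it\Delta}R^J_k\Vert_{Z(I_k)}$'' fails for the quartic-in-profile term $\mathfrak{O}_{4,1}(U^J_{prof,k},e^{it\Delta}R^J_k)$: Lemma \ref{NLEst2} gives a \emph{sum over permutations}, and the permutation that places $e^{it\Delta}R^J_k$ in the $X^1$ slot produces $\Vert e^{it\Delta}R^J_k\Vert_{X^1}\prod\Vert U^J_{prof,k}\Vert_{Z'}$, which contains no $Z$-norm of $R^J_k$ and is merely $O(1)$, since the bubbles $U^\alpha_k$ are \emph{not} small in $Z'(I_k)$ (their $Z$-norm is essentially scale invariant and of order one on their own Euclidean window inside $I_k$). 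Your ``technical wrinkle'' about assigning the high frequency to the $X^1$ factor via \eqref{NLEst3} does not repair this: when $R^J_k$ lives at frequencies much higher than $N^\alpha_k$ (nothing prevents this), \eqref{NLEst3} forces $R^J_k$ into the $X^1$ slot and the smallness is lost. The paper resolves exactly this case by splitting $e^{it\Delta}R^J_k$ at frequency $BN^\alpha_k$: the low frequencies are handled by \eqref{NLEst3} with the core in the $X^1$ slot, while the high frequencies require Lemma \ref{HFLF}, a quantitative non-concentration (equidistribution) estimate proved via the kernel $K=P_{>BN}\int e^{-it\Delta}We^{it\Delta}dt\,P_{>BN}$ and Schur's lemma; the smallness there has nothing to do with \eqref{SmallnessRterm}.

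The same missing mechanism undermines your treatment of \eqref{Asol1}. After reducing, via the decomposition \eqref{DecNP} of Proposition \ref{GEForEP}, to interactions between a core $\omega^{\beta,\theta}_k$ at scale $N^\beta_k$ and the scattering (linear) component $\omega^{\alpha,\theta,\pm\infty}_k$ of a profile at much finer scale $N^\alpha_k\gg N^\beta_k$ (or against $W$ when $\beta=0$ is replaced by a bubble), your proposed ``bilinear, really just $L^p_{x,t}$ H\"older'' estimate and ``explicit scaling computation'' only yield an $O(1)$ bound: for instance $\Vert (\omega^{\beta,\theta}_k)^4\,\nabla e^{it\Delta}T_{N^\alpha_k}\phi^{\pm\infty}\Vert_{L^1_tL^2_x}\lesssim (N^\beta_k)^2\cdot (N^\beta_k)^{-2}\cdot 1\simeq 1$, with no decay in $k$. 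The actual smallness comes from the fact that the time-averaged energy density of a high-frequency linear torus solution cannot concentrate on the small space-time box carrying the core --- again Lemma \ref{HFLF}. Your other reductions (errors $\rho^{\gamma,\theta}_k$ and pairs of scattering components via Lemma \ref{NLEst2}; orthogonal same-scale cores via disjoint supports or separation, as in Lemma \ref{Al4}; the time-separated same-scale case via \eqref{ELiL1}) are in line with the paper, but without the high-frequency/low-frequency non-interaction estimate the proof of both \eqref{Asol1} and \eqref{Asol2} does not close.
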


%%%%%%%%%%%%%%%%%%%%%%%%%%%%%%%%%%%%%%%%%%%%%%%%%%%%%%%%%%%%%%%%%%%%%%%%%%%%%%%%%%%%%%%%%%%%%%%%%%%%%%%%%%%%%%%%%%%%%%%%%%%%%%%%%%%%%%%%%%%%%%%%%%%%%%%%%%%%%%%%%%%%%%%%%%%%%%%%%%%%%%%%%%%%%%%%%%%%%%%%%%%%%%%%%%%%%%%%%%%%%%%%%%%%%%%%%%%%%%%%%%%%%%%%%%%%%%%%%%%%%%%%%%%%%%%%%%%%%%%%%%%%%%%%

\section{Proof of lemma \ref{AlmostSol}}\label{ProofLastLem}

We will need the following lemma which states that a high-frequency linear solution does not interact significantly with a low-frequency profile. Recall from Section \ref{prelim} that $\mathfrak{O}_{4,1}(a,b)$ denotes a quentity which is quartic in $\{a,\overline{a}\}$ and linear in $\{b,\overline{b}\}$.

\begin{lemma}\label{HFLF}
Assume that $B,N\geq 2$ are dyadic numbers and $\omega:\T^3\times(-1,1)\to\mathbb{C}$ is a function satisfying $\vert \nabla^j\omega\vert \leq N^{j+1/2}\mathbf{1}_{\{|x|\leq N^{-1},\,|t|\leq N^{-2}\}}$, $j=0,1$. Then
\begin{equation*}
\Vert \mathfrak{O}_{4,1}(\omega,e^{it\Delta}P_{> BN}f)\Vert_{L^1((-1,1),H^1)}\lesssim (B^{-1/200}+N^{-1/200})\|f\|_{H^1(\T^3)}.
\end{equation*}
\end{lemma}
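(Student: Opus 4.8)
The plan is to exploit the frequency separation $B N \ll$ versus $N$: since $\omega$ is localized (in Fourier) at frequencies $\lesssim N$ (up to rapidly decaying tails, coming from its spatial localization at scale $N^{-1}$) and $e^{it\Delta}P_{>BN}f$ is localized at frequencies $>BN$, any product $\mathfrak{O}_{4,1}(\omega, e^{it\Delta}P_{>BN}f)$ has all its Fourier support at frequencies $\gtrsim BN$, and the output frequency is comparable to the frequency of the high-frequency factor. First I would perform a Littlewood--Paley decomposition of the high-frequency solution, writing $P_{>BN}f = \sum_{K > BN} P_K f$, so that $\mathfrak{O}_{4,1}(\omega, e^{it\Delta}P_K f)$ has output frequency $\sim K$; summing in $K$ costs only an $\ell^1\hookleftarrow\ell^2$ loss that can be absorbed by a factor $K^{-\eps}$. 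This reduces matters to bounding, for each dyadic $K > BN$,
\begin{equation*}
\Vert \mathfrak{O}_{4,1}(\omega, e^{it\Delta}P_K f)\Vert_{L^1((-1,1),H^1)} \lesssim K^{-\eps}\big((K/N)^{-1/100} + N^{-1/100}\big)\Vert P_K f\Vert_{H^1},
\end{equation*}
with the gain $(K/N)^{-1/100} \le B^{-1/100}$ coming from the frequency imbalance.

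Next I would estimate this local-in-space, local-in-time object by brute force using Hölder. The key point is that $\omega$ is supported in the tiny set $\{|x| \le N^{-1}, |t| \le N^{-2}\}$, of spacetime measure $\sim N^{-5}$, and satisfies $\Vert \omega \Vert_{L^\infty_{x,t}} \lesssim N^{1/2}$, $\Vert \nabla\omega\Vert_{L^\infty_{x,t}}\lesssim N^{3/2}$. For the $H^1$ norm of the product I distribute the gradient: either it hits $\omega$ (gaining $N$ but losing nothing) or it hits the high-frequency factor (the natural $K$). In all cases I would put the four $\omega$-factors in $L^\infty_{x,t}$ over their support and the high-frequency factor (or its gradient) in an $L^p_{x,t}$ norm with $p$ close to $4$, using the Strichartz estimate from Corollary \ref{alex1} / Theorem \ref{Stric}: $\Vert e^{it\Delta}P_K f\Vert_{L^p_{x,t}(\T^3\times(-1,1))} \lesssim K^{3/2 - 5/p}\Vert P_K f\Vert_{L^2}$. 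Because $\omega$ lives on a set of measure $N^{-5}$, the Hölder pairing of $L^1_{x,t}(\mathrm{supp}\,\omega)$ against $L^p_{x,t}$ produces a volume factor $(N^{-5})^{1-1/p}$, and balancing $N^{1/2\cdot 4}\cdot N^{-5(1-1/p)}$ against the Strichartz factor $K^{3/2-5/p}$ (together with the $N^{-1}$ from normalizing $\Vert P_K f\Vert_{H^1}$) yields a net power $(K/N)^{-c}$ for some $c>0$; choosing $p$ slightly larger than $4$ makes $c$ slightly smaller than some universal value, and one can certainly arrange $c \ge 1/100$ on the range $K > BN \ge N$. The term where $\nabla$ lands on $\omega$ is handled identically, with $N^{1/2}$ replaced by $N^{3/2}$ on one factor; the extra $N$ is harmless since it is dominated by the gain $K \ge N$.

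I also need the rapidly-decaying Fourier tail of $\omega$: strictly speaking $\omega$ is not frequency-localized below $N$, so the above orthogonality in $K$ is not exact. I would handle this either by noting that the spatial cutoff at scale $N^{-1}$ forces $\widehat{\omega}$ to decay like $\langle \xi/N\rangle^{-m}$ for any $m$ (using $|\nabla^j\omega| \le N^{j+1/2}$ for all $j$ in the regime where this is available — or just $j=0,1$ combined with an a priori $L^1$ bound, which already suffices to put $\Vert \omega\Vert_{L^1_x}\lesssim N^{-5/2}$ and run a convolution argument), so that the contribution of the product at output frequency $\gg K$ is negligible; alternatively, one argues without orthogonality by summing the crude bound in $K$ directly, which still converges because of the Strichartz gain relative to $\Vert P_K f\Vert_{H^1}$. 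The main obstacle, as I see it, is purely bookkeeping: keeping careful track of where the single gradient in the $H^1$-norm and the implicit gradient in $\mathfrak{O}_{4,1}$ go, and verifying that in every one of the (finitely many) cases the exponents combine to a genuine negative power of both $B$ and $N$ — the choice $p = 4 + \text{(small)}$ and the exponent $1/200$ rather than a larger gain is dictated precisely by having enough room in this balancing. Once the single dyadic estimate is in hand, summing over $K > BN$ and $\eps$-absorbing gives the stated bound with the factor $B^{-1/200} + N^{-1/200}$.
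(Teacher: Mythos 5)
Your reduction of the easy pieces is fine (the term where the derivative falls on $\omega$ is indeed controlled using $\|e^{it\Delta}f\|_{L^\infty_t L^2}\lesssim (BN)^{-1}$, which is essentially how the paper disposes of it), but the main step of your plan fails: pure size estimates (H\"older over the support of $\omega$ plus Strichartz for the high-frequency factor) cannot produce \emph{any} negative power of $B$ or $N$ for the dominant term $\||\omega|^4\,\nabla e^{it\Delta}P_Kf\|_{L^1_tL^2_x}$. Check the exponents: Cauchy--Schwarz in $t$ over $|t|\le N^{-2}$ gives a factor $N^{-1}$, the four $\omega$'s give $N^2\mathbf{1}_S$ with $|S|\sim N^{-5}$, and putting $\nabla e^{it\Delta}P_Kf$ in $L^p_{t,x}$ with the torus Strichartz bound $K^{3/2-5/p}\|P_Kf\|_{H^1}$ (only valid for $p>4$) yields altogether $N^{-3/2+5/p}K^{3/2-5/p}=(K/N)^{3/2-5/p}$, which for $p>4$ is a \emph{loss} of at least $(K/N)^{1/4}$; the only arrangement that does not lose is $L^\infty_tL^2_x$ for the high factor, which breaks exactly even ($N^2\cdot N^{-2}\cdot\|P_Kf\|_{H^1}$) and gives no smallness. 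Since $K\ge BN\ge N$, every use of Bernstein or Strichartz costs positive powers of $K$ that dominate the $N^{-1}$-volume gains, so there is no $K^{-\eps}$ to sum with and no source for the claimed $(K/N)^{-c}$; the balancing step in your plan is where the argument breaks, and it cannot be fixed by choosing $p$ closer to $4$ or interpolating.

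The gain in the lemma is an oscillation/non-resonance effect, not a size effect: over a time interval of length $N^{-2}$ a free wave of frequency $\ge BN$ travels a distance $\ge BN^{-1}$ and exits the support of $\omega$, and absolute values cannot see this. The paper captures it by reducing (after Cauchy--Schwarz as above) to the operator norm of
\begin{equation*}
K=P_{>BN}\int_{\R}e^{-it\Delta}\,W\,e^{it\Delta}\,dt\,P_{>BN},\qquad W(x,t)=N^4\eta^3(N\Psi^{-1}(x))\eta^1(N^2t),
\end{equation*}
whose matrix coefficients are $c_{p,q}\simeq(\mathcal{F}_{x,t}W)(p-q,|q|^2-|p|^2)$ restricted to $|p|,|q|\ge BN$. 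The space-time localization of $W$ forces $|p-q|\lesssim N$ and $\big||p|^2-|q|^2\big|\lesssim N^2$, and since $|p|\ge BN$ this confines $q-p$ to a thin slab nearly orthogonal to $p$; Schur's lemma plus a lattice-point count in that slab gives $\|K\|_{L^2\to L^2}\lesssim N^2(B^{-1/100}+N^{-1/100})$, which is exactly the smallness your H\"older bookkeeping cannot reach. To repair your proposal you would need to replace the Strichartz step by some such almost-orthogonality (or bilinear high-low) input that exploits the time integration against $e^{\pm it\Delta}$; as written, the per-$K$ estimate you reduce to is not provable by your method, and the subsequent summation over $K$ has nothing to converge with.
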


\begin{proof}[Proof of Lemma \ref{HFLF}]
We may assume that $\|f\|_{H^1(\T^3)}=1$ and $f=P_{>BN}f$. We notice that
\begin{equation*}
\begin{split}
\Vert \mathfrak{O}_{4,1}(\omega,e^{it\Delta}P_{>BN}f)\Vert_{L^1((-1,1),H^1)}&\lesssim \Vert \mathfrak{O}_{4,1}(\omega,\nabla e^{it\Delta}f)\Vert_{L^1((-1,1),L^2)}\\
&+\Vert e^{it\Delta}f\Vert_{L^\infty_tL^2}\Vert \omega\Vert_{L^4_tL^\infty_x}^3\Vert \vert\nabla\omega\vert+\vert\omega\vert\Vert_{L^4_tL^\infty_x}\\
&\lesssim \Vert \mathfrak{O}_{4,1}(\omega,\nabla e^{it\Delta}f)\Vert_{L^1((-1,1),L^2)}+B^{-1}.
\end{split}
\end{equation*}
Now we write
\begin{equation*}
\begin{split}
\Vert \mathfrak{O}_{4,1}(\omega,\nabla e^{it\Delta}f)\Vert_{L^1((-1,1),L^2)}^2&\lesssim N^{-2}\Vert W^\frac{1}{2}\nabla e^{it\Delta}f\Vert_{L^2(\mathbb{T}^3\times (-1,1))}^2\\
&\lesssim N^{-2}\sum_{j=1}^3\int_{-1}^1\langle e^{it\Delta}\partial_jf,We^{it\Delta}\partial_jf\rangle_{L^2\times L^2(\mathbb{T}^3)} dt\\
&\lesssim N^{-2}\sum_{j=1}^3\langle \partial_jf,\left[\int_{t=-1}^1e^{-it\Delta}We^{it\Delta}dt\right] \partial_jf\rangle_{L^2\times L^2(\mathbb{T}^3)},
\end{split}
\end{equation*}
where $W(x,t)=N^4\eta^3(N\Psi^{-1}(x))\eta^1(N^2t)$. Therefore, it remains to prove that
\begin{equation}\label{Alex50}
\|K\|_{L^2(\T^3)\to L^2(\T^3)}\lesssim N^2(B^{-1/100}+N^{-1/100})\,\,\text{ where }\,\,K=P_{>BN}\int_{\mathbb{R}}e^{-it\Delta}We^{it\Delta}P_{>BN}\,dt.
\end{equation}

We compute the Fourier coefficients of $K$ as follows
\begin{equation*}
\begin{split}
c_{p,q}&=\langle e^{ipx},Ke^{iqx}\rangle_{L^2\times L^2(\mathbb{T}^3)}\\
&=(1-\eta^3)(p/BN)(1-\eta^3)(q/BN)\int_{(-1,1)\times\mathbb{T}^3}e^{it\left[\vert p\vert^2-\vert q\vert^2\right]+i(q-p)\cdot x}W_k(x,t)dxdt\\
&=C\left(\mathcal{F}_{x,t}W\right)(p-q,\vert q\vert^2-\vert p\vert^2)(1-\eta^3)(p/BN)(1-\eta^3)(q/BN).
\end{split}
\end{equation*}
Hence, we obtain that
\begin{equation}\label{Estimcpq}
\vert c_{p,q}\vert\lesssim N^{-1}\left[1+\frac{\vert \vert p\vert^2-\vert q\vert^2\vert}{N^2}\right]^{-10}\left[1+\frac{\vert p-q\vert}{N}\right]^{-10}\mathbf{1}_{\{|p|\ge BN\}}\mathbf{1}_{\{|q|\ge BN\}}.
\end{equation}
Using Schur's lemma
\begin{equation*}
\|K\|_{L^2(\T^3)\to L^2(\T^3)}\lesssim \sup_{p\in\Z^3}\sum_{q\in\Z^3}|c_{p,q}|+\sup_{q\in\Z^3}\sum_{p\in\Z^3}|c_{p,q}|,
\end{equation*}
and the bound \eqref{Estimcpq}, for \eqref{Alex50} it suffices to prove that
\begin{equation}\label{Alex51}
N^{-3}\sup_{|p|\geq BN}\sum_{v\in\Z^3}\left[1+\frac{\vert \vert p\vert^2-\vert p+v\vert^2\vert}{N^2}\right]^{-10}\left[1+\frac{\vert v\vert}{N}\right]^{-10}\lesssim (B^{-1/100}+N^{-1/100}).
\end{equation}

We notice that the sum over $|v|\geq N\min(N,B)^{1/100}$ in the left-hand side of \eqref{Alex51} is easily bounded by $C\min(N,B)^{-1/100}$. Similarly, the sum over the vectors $v$ with the property that $|v|\leq N\min(N,B)^{1/100}$ and $|p\cdot v|\geq N^2\min(N,B)^{1/10}$ is also bounded by $C\min(N,B)^{-1/100}$. Therefore, letting $\widehat{p}=p/|p|$ and using that $|p|\geq BN$, it remains to prove that
\begin{equation*}
N^{-3}\sum_{|\widehat{p}\cdot v|\leq N\min(N,B)^{-9/10}}\left[1+\frac{\vert v\vert}{N}\right]^{-10}\lesssim \min(N,B)^{-1/100},
\end{equation*}
which is an elementary estimate.
\end{proof}

We will need one more lemma.

\begin{lemma}\label{Al4}
Assume that $\mathcal{O}_\al=(N_{k,\alpha},t_{k,\alpha},x_{k,\alpha})_{k}\in\mathcal{F}_e$, $\al\in\{1,2\}$, are two orthogonal frames, $I\subseteq (-1,1)$ is a fixed open interval, $0\in I$, and $T_1,T_2,R\in[1,\infty)$ are fixed numbers, $R\geq T_1+T_2$. For $k$ large enough let
\begin{equation*}
\mathcal{S}_{k,\alpha}=\{(x,t)\in\T^3\times I:|t-t_{k,\alpha}|< T_\alpha N_{k,\alpha}^{-2},\,|x-x_{k,\alpha}|\leq R N_{k,\alpha}^{-1}\}.
\end{equation*}
Assume that $(\omega_{k,1},\omega_{k,2},f_k,g_k,h_k)_k$ are 5 sequences of functions with the properties
\begin{equation}\label{Al5}
\begin{split}
&|D_x^m\omega_{k,\alpha}|+N_{k,\alpha}^{-2}\mathbf{1}_{\mathcal{S}_{k,\alpha}}\cdot |\partial_tD_x^m\omega_{k,\alpha}|\leq R N_{k,\alpha}^{1/2+|m|}\mathbf{1}_{\mathcal{S}_{k,\alpha}},\quad 0\leq |m|\leq 4,\,\al\in\{1,2\},\\
&\|f_k\|_{X^1(I)}\leq 1,\quad \|g_k\|_{X^1(I)}\leq 1,\quad \|h_k\|_{X^1(I)}\leq 1,
\end{split}
\end{equation}
for any $k$ sufficiently large. Then
\begin{equation*}
\lim_{k\to\infty}\|\omega_{k,1}\omega_{k,2}f_kg_kh_k\|_{N^1(I)}=0.
\end{equation*}
\end{lemma}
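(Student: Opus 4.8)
Here I write $\Phi_k:=\omega_{k,1}\omega_{k,2}f_kg_kh_k$, and I assume (writing $N(I):=N^1(I)$, the norm of \eqref{NNorm}) the standard facts $X^1\hookrightarrow Y^1\hookrightarrow L^\infty_tH^1$ and, by symmetry, $Y^{-1}\hookrightarrow L^\infty_tH^{-1}$. The plan is to bound $\|\Phi_k\|_{N(I)}$ by $\|\Phi_k\|_{L^1_t(I,H^1(\T^3))}$ via \eqref{EstimNnorm} (pairing against $Y^{-1}$ and using $\langle h,v\rangle\le\|h\|_{L^1_tH^1}\|v\|_{L^\infty_tH^{-1}}$), and then to use that $\Phi_k$ is supported in the Euclidean window of the \emph{finer} frame, which is tiny. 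Since the hypotheses \eqref{Al5} are symmetric in $\alpha\in\{1,2\}$ and the conclusion is symmetric in $f_k,g_k,h_k$, it suffices to show every subsequence has a further subsequence along which the norm tends to $0$; passing to such a subsequence I may assume $N_{k,1}\le N_{k,2}$ for all $k$ and $N_{k,1}/N_{k,2}\to\theta\in[0,1]$.

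First I would dispose of the comparable-scale case $\theta\in(0,1]$. Then $|\ln(N_{k,1}/N_{k,2})|$ stays bounded, so orthogonality of $\mathcal O_1,\mathcal O_2$ gives $N_{k,1}^2|t_{k,1}-t_{k,2}|+N_{k,1}|x_{k,1}-x_{k,2}|\to\infty$; after a further extraction one of these terms tends to $\infty$, and since $N_{k,1}\simeq N_{k,2}$ the slabs $\mathcal S_{k,1},\mathcal S_{k,2}$ (spatial radius $\lesssim N_{k,\alpha}^{-1}$, temporal radius $\lesssim N_{k,\alpha}^{-2}$) are disjoint for $k$ large, hence $\omega_{k,1}\omega_{k,2}\equiv0$. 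So from now on $\theta=0$, i.e. $N_{k,2}/N_{k,1}\to\infty$; discarding a subsequence if $\mathcal S_{k,1}\cap\mathcal S_{k,2}=\emptyset$ infinitely often, I may assume $\Phi_k$ is supported in $\mathcal S_{k,1}\cap\mathcal S_{k,2}\subseteq\mathcal S_{k,2}$, a set contained in $B_k\times J_k$ with $B_k\subseteq\T^3$ a ball of radius $\lesssim RN_{k,2}^{-1}$ and $|J_k|\lesssim T_2N_{k,2}^{-2}$, on which $|\omega_{k,\alpha}|\le RN_{k,\alpha}^{1/2}$ and $|\nabla\omega_{k,\alpha}|\le RN_{k,\alpha}^{3/2}$.

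Now I would expand $\nabla\Phi_k$ by Leibniz. For the term with no derivative, and the two terms in which $\nabla$ lands on $\omega_{k,1}$ or $\omega_{k,2}$, one puts the $\omega$-factors in $L^\infty_x$ and uses $\|f_kg_kh_k(t)\|_{L^2(B_k)}\le\|f_k(t)\|_{L^6}\|g_k(t)\|_{L^6}\|h_k(t)\|_{L^6}\lesssim1$ (Sobolev and $X^1(I)\hookrightarrow L^\infty_tH^1$), then integrates over $t\in J_k$; the three contributions are $\lesssim R^2T_2 N_{k,1}^{1/2}N_{k,2}^{-3/2}$, $R^2T_2 N_{k,1}^{1/2}N_{k,2}^{-1/2}$ and $R^2T_2(N_{k,1}/N_{k,2})^{3/2}$ respectively, all $\to0$ since $1\le N_{k,1}\le N_{k,2}\to\infty$ and $N_{k,1}/N_{k,2}\to0$. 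The remaining three terms, $\omega_{k,1}\omega_{k,2}(\nabla f_k)g_kh_k$ and its analogues with $\nabla g_k$, $\nabla h_k$, are the crux — a pointwise-in-$t$ Hölder estimate would call for $\nabla f_k(t)\in L^6_x$, which the hypothesis $\|f_k\|_{X^1}\le1$ does not provide. Instead I would integrate in time: by Cauchy–Schwarz on $J_k$, $\|\omega_{k,1}\omega_{k,2}(\nabla f_k)g_kh_k\|_{L^1_tL^2_x}\le|J_k|^{1/2}\|\omega_{k,1}\omega_{k,2}\|_{L^\infty_{x,t}}\|(\nabla f_k)g_kh_k\|_{L^2_{x,t}(\T^3\times I)}$, and invoke the trilinear space--time bound $\|(\nabla f_k)g_kh_k\|_{L^2_{x,t}(\T^3\times I)}\lesssim\|f_k\|_{X^1(I)}\|g_k\|_{X^1(I)}\|h_k\|_{X^1(I)}\lesssim1$; this follows from \eqref{NLEstTor} of Lemma \ref{alex2} by Littlewood--Paley decomposing each factor, placing the highest-frequency one in $Y^0$ and the other two in $Z'$ (the $\delta$-power making the resulting frequency sum summable — here is where $\|f_k\|_{X^1}\le1$, not merely $\|f_k\|_{L^\infty_tH^1}\le1$, is used). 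In particular this also shows $\Phi_k\in L^1_t(I,H^1)$, so the reduction above is legitimate. Since $|J_k|^{1/2}\lesssim T_2^{1/2}N_{k,2}^{-1}$ and $\|\omega_{k,1}\omega_{k,2}\|_{L^\infty_{x,t}}\le R^2(N_{k,1}N_{k,2})^{1/2}$, this term is $\lesssim R^2T_2^{1/2}(N_{k,1}/N_{k,2})^{1/2}\to0$. Summing all contributions gives $\|\Phi_k\|_{L^1_t(I,H^1)}\to0$, hence $\|\Phi_k\|_{N(I)}\to0$.

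The step I expect to be the real obstacle is exactly this last family of terms: the derivative falling on one of $f_k,g_k,h_k$ cannot be absorbed by Hölder at fixed time, so one must trade the factor $|J_k|^{1/2}\sim N_{k,2}^{-1}$ — the thinness of the Euclidean time window of the finer frame — against the crude amplitude $\|\omega_{k,1}\omega_{k,2}\|_{L^\infty}\sim(N_{k,1}N_{k,2})^{1/2}$, which only wins because $N_{k,1}\ll N_{k,2}$; correspondingly, the comparable-scale regime cannot be treated this way and must be settled separately, where it is orthogonality that rescues the argument by pulling the two slabs apart.
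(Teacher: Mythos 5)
Your proof is correct, but it follows a genuinely different route from the paper's. The paper handles the comparable-scale case exactly as you do (orthogonality of the frames forces $\mathcal{S}_{k,1}\cap\mathcal{S}_{k,2}=\emptyset$ for $k$ large), but in the separated-scale case it never leaves the critical $X^1/Z/Z'$ framework: it restricts the coarse core to the time window of the fine frame, which makes its $Z$-norm small and its very-high-frequency part small in $X^1$ (cf.\ \eqref{Al6}), splits the fine core into low and high frequencies as in \eqref{Al7}, and then estimates the three resulting quintic pieces with Lemma \ref{NLEst2} and \eqref{NLEst3}, obtaining an $O_R(\eps^{1/2})$ bound. You instead dominate the $N^1(I)$-norm by $L^1_t(I,H^1)$ (via \eqref{EstimNnorm} together with $Y^{-1}\hookrightarrow L^\infty_tH^{-1}$, or equivalently the energy estimate behind \eqref{EstimX1Norm}; the paper itself uses this reduction when it passes to $L^1(I_k,H^1)$ in the proof of \eqref{Al10}), and you exploit the thinness $|J_k|\lesssim T_2N_{k,2}^{-2}$ of the finer window directly: pointwise bounds dispose of the Leibniz terms where the gradient falls on a core, and Cauchy--Schwarz in time trades $|J_k|^{1/2}$ against $\|\omega_{k,1}\omega_{k,2}\|_{L^\infty}$ for the terms where it falls on $f_k,g_k,h_k$, leaving only the trilinear bound $\|(\nabla f)gh\|_{L^2_{x,t}}\lesssim\|f\|_{X^1}\|g\|_{X^1}\|h\|_{X^1}$. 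That bound is true and derivable from \eqref{NLEstTor}, but be aware that the dyadic summation needs slightly more than the $\delta$-gain: when the derivative sits on the highest frequency one must also use almost-orthogonality of the outputs (i.e.\ Cauchy--Schwarz in the highest frequency) to sum $\ell^2$ rather than $\ell^1$ there; this is the same bookkeeping already implicit in passing from Lemma \ref{alex2} to Lemma \ref{NLEst2}, so citing it is legitimate rather than a gap. What your route buys is a softer and more transparent argument, making clear that only the ratio $N_{k,1}/N_{k,2}$ and the volume of the finer Euclidean window matter; what the paper's route buys is a quantitative $\eps^{1/2}$ bound obtained entirely within the $X^1/Z'$ machinery, with no Leibniz expansion of the cores and no use of the cruder $L^1_tH^1$ reduction.
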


\begin{proof}[Proof of Lemma \ref{Al4}] Fix $\eps>0$ small. If $N_{k,1}/N_{k,2}+N_{k_2}/N_{k,1}\leq\eps^{-1000}$ and $k$ is large enough then $\mathcal{S}_{k,1}\cap\mathcal{S}_{k,2}=\emptyset$, thus
\begin{equation*}
\omega_{k,1}\omega_{k,2}f_kg_kh_k\equiv 0.
\end{equation*}

If
\begin{equation}\label{Al5.5}
N_{k,1}/N_{k,2}\geq\eps^{-1000}/2
\end{equation}
we observe that
\begin{equation*}
\omega_{k,1}\omega_{k,2}=\omega_{k,1}\widetilde{\omega_{k,2}}:=\omega_{k,1}\cdot (\omega_{k,2}\mathbf{1}_{(t_{k,1}-T_1N_{k,1}^{-2},t_{k,1}+T_1N_{k,1}^{-2})}(t))
\end{equation*}
and
\begin{equation}\label{Al6}
\|\widetilde{\omega_{k,2}}\|_{X^1(I)}\lesssim_R 1,\qquad \|\widetilde{\omega_{k,2}}\|_{Z(I)}\lesssim_R \eps,\qquad \|P_{>\eps^{-10}N_{k_2}}\widetilde{\omega_{k,2}}\|_{X^1(I)}\lesssim_R \eps,
\end{equation}
where the bound on the $X^1$-norm above and below is computed using \eqref{EstimX1Norm}.
Also, we decompose
\begin{equation}\label{Al7}
\begin{split}
&\omega_{k,1}=P_{\leq \eps^{50} N_{k,1}}\omega_{k,1}+P_{>\eps^{50} N_{k,1}}\omega_{k,1},\\
&\|\omega_{k,1}\|_{X^1(I)}\lesssim _R1, \qquad \|P_{\leq \eps^{50} N_{k,1}}\omega_{k,1}\|_{X^1(I)}\lesssim _R\eps.
\end{split}
\end{equation}
Using Lemma \ref{NLEst2}, \eqref{NLEst3}, and the bounds \eqref{Al6}, \eqref{Al7}, we estimate, assuming \eqref{Al5.5},
\begin{equation*}
\begin{split}
\|\omega_{k,1}\omega_{k,2}f_kg_kh_k\|_{N^1(I)}&\lesssim \|(P_{\leq \eps^{50} N_{k,1}}\omega_{k,1})\widetilde{\omega_{k,2}}f_kg_hh_k\|_{N^1(I)}\\
&+\|(P_{> \eps^{50} N_{k,1}}\omega_{k,1})(P_{>\eps^{-10}N_{k_2}}\widetilde{\omega_{k,2}})f_kg_kh_k\|_{N^1(I)}\\
&+\|(P_{> \eps^{50} N_{k,1}}\omega_{k,1})(P_{\leq\eps^{-10}N_{k_2}}\widetilde{\omega_{k,2}})f_kg_kh_k\|_{N^1(I)}\\
&\lesssim_R\eps^{1/2}.
\end{split}
\end{equation*}
The conclusion of the lemma follows.
\end{proof}

We turn now to the proof of Lemma \ref{AlmostSol}. We will use repeatedly the following description of the nonlinear profiles $U_k^\gamma$. Using Proposition \ref{GEForEP}, Lemma \ref{step1} and Lemma \ref{Extinction}, it follows that for any $\theta>0$ there is $T^0_{\theta,\gamma}=T^0_{\theta,\psi^\gamma}$ sufficiently large such that for all $T_{\theta,\gamma}\geq T^0_{\theta,\gamma}$ there is $R_{\theta,\gamma}$ sufficiently large such that for all $k$ sufficiently large (depending on $R_{\theta,\gamma}$) we can decompose
\begin{equation}\label{DecNP}
\begin{split}
&\mathbf{1}_{(-T_{\theta,\gamma}^{-1},T_{\theta,\gamma}^{-1})}(t)U^\gamma_k=V^{\gamma,\theta}_k+\rho^{\gamma,\theta}_k=\omega^{\gamma,\theta,-\infty}_k+\omega^{\gamma,\theta}_k+\omega^{\gamma,\theta,+\infty}_k+\rho^{\gamma,\theta}_k,\\
&\Vert \omega^{\gamma,\theta,\pm\infty}_k\Vert_{Z^\prime(-T_{\theta,\gamma}^{-1},T_{\theta,\gamma}^{-1})}+\Vert\rho^{\gamma,\theta}_k\Vert_{X^1(-T_{\theta,\gamma}^{-1},T_{\theta,\gamma}^{-1})}\le\theta,\\
&\Vert \omega^{\gamma,\theta,\pm\infty}_k\Vert_{X^1(-T_{\theta,\gamma}^{-1},T_{\theta,\gamma}^{-1})}+\Vert\omega^{\gamma,\theta}_k\Vert_{X^1(-T_{\theta,\gamma}^{-1},T_{\theta,\gamma}^{-1})}\lesssim 1,\\
&\vert D_x^m\omega_k^{\gamma,\theta}\vert+(N_k^\gamma)^{-2}\mathbf{1}_{\mathcal{S}^{\gamma,\theta}_k}\vert \partial_tD_x^m\omega_k^{\gamma,\theta}\vert\le R_{\theta,\gamma} (N^\gamma_k)^{1/2+|m|}\mathbf{1}_{\mathcal{S}^{\gamma,\theta}_k},\qquad 0\leq|m|\leq 6,\\
&\omega_k^{\gamma,\theta,\pm\infty}=\mathbf{1}_{\{\pm(t-t^\gamma_k)\ge T_{\theta,\gamma}(N^\gamma_k)^{-2},\,\vert t\vert\le T_{\theta,\gamma}^{-1}\}}[e^{i(t-t_k^\gamma)\Delta}\pi_{x_k^\gamma}T_{N_k^\gamma}(\phi^{\gamma,\theta,\pm\infty})],\\
&\|\phi^{\gamma,\theta,\pm\infty}\|_{\dot{H}^1(\R^3)}\lesssim 1,\qquad\phi^{\gamma,\theta,\pm\infty}=P_{\le R_{\theta,\gamma}}(\phi^{\gamma,\theta,\pm\infty}),\qquad \|\phi^{\gamma,\theta,\pm\infty}\|_{L^1(\R^3)}\leq R_{\theta,\gamma},
\end{split}
\end{equation}
where
\begin{equation*}
\mathcal{S}^{\gamma,\theta}_k:=\{(x,t)\in\T^3\times (-T_{\theta,\gamma}^{-1},T_{\theta,\gamma}^{-1}):\vert t-t^\gamma_k\vert<T_{\theta,\gamma}(N^\gamma_k)^{-2},\,\vert x-x^\gamma_k\vert\le R_{\theta,\gamma}(N^\gamma_k)^{-1}\}.
\end{equation*}
Indeed, one starts by examining \eqref{ProxyEuclHyp} and \eqref{ScatEuclSol} with $T_{\theta,\gamma}$ sufficiently large; all terms that have small $X^1$ norm are included in $\rho^{\gamma,\theta}_k$. The core $\omega^{\gamma,\theta}_k$ corresponds to the main term in \eqref{ProxyEuclHyp} and the scattering components $\omega^{\gamma,\theta,\pm\infty}_k$ correspond to the main terms in \eqref{ScatEuclSol} (after an additional regularization that produces additional acceptable $X^1$ errors).  

In addition, since $\|W\|_{X^1(-1,1)}\lesssim 1$, for any $\theta>0$ there is $T_\theta>0$ such that
\begin{equation}\label{DecNP2}
\Vert W\Vert_{Z^\prime(-T_{\theta}^{-1},T_{\theta}^{-1})}\le\theta,\quad \Vert W\Vert_{X^1(-T_{\theta}^{-1},T_{\theta}^{-1})}\lesssim 1.
\end{equation} 

\begin{proof}[Proof of  \eqref{Asol1}] For fixed $J$, we have that
\begin{equation*}
F(U^J_{prof,k})-F(W)-\sum_{1\le\alpha\le J}F(U^\alpha_k)
\end{equation*}
can be expressed as a finite linear combination of products of the form
\begin{equation}\label{exp}
W^1_kW^2_kW^3_kW^4_kW^5_k
\end{equation}
for $W_k^i\in\{W,\overline{W},U^\alpha_k,\overline{U}^\alpha_k,1\le\alpha\le J\}$, with at least two terms differing by more than just complex conjugation.  

Assume $\theta>0$ is fixed. We further decompose the profiles $U_k^\alpha$, $1\leq\al\leq J$ according to the first line in \eqref{DecNP} and set
\begin{equation*}
T_{\theta,\alpha}=T_{\theta,\be}:=T_\theta\text{ for any }\al,\be\in\{1,\ldots,J\}.
\end{equation*}
For $k$ large enough, all expressions arising from a product as in \eqref{exp} containing an error term $\rho_k^{\alpha,\theta}$ are $\lesssim\theta$ in the $N^1(I_k)$ norm, in view of Lemma \ref{NLEst2}. Similarly, all expressions containing two scattering components $\omega^{\al,\theta,\pm\infty}_k$ and $\omega^{\be,\theta,\pm\infty}_k$ (or one scattering component $\omega^{\al,\theta,\pm\infty}_k$ and $W$) are also $\lesssim\theta$ in the $N^1(I_k)$ norm, using again Lemma \ref{NLEst2}. All expressions containing two different cores $\omega^{\al,\theta}_k$ and $\omega^{\be,\theta}_k$, $\al\neq\be$, converge to $0$ in the $N^1(I_k)$ norm, in view of Lemma \ref{Al4}. Therefore it remains to prove that
\begin{equation}\label{Al10}
\limsup_{k\to\infty}\|\mathfrak{O}_{4,1}(\omega^{\be,\theta}_k,\omega^{\al,\theta,\pm\infty}_k)\|_{N^1(I_k)}\lesssim\theta^{1/10},
\end{equation}
for any $\al=0,1,\ldots J$, $\be=1,2,\ldots J$, $\al\neq\be$, $\omega^{0,\theta,\pm\infty}:= W\cdot\mathbf{1}_{(-T_\theta^{-1},T_\theta^{-1})}(t)$.

Let $N_k^0:=1$. The limit \eqref{Al10} is an easy consequence of Lemma \ref{HFLF} if
\begin{equation*}
\lim_{k\to\infty}N_k^\al/N_k^\be=\infty.
\end{equation*}
If
\begin{equation*}
\lim_{k\to\infty}N_k^\be/N_k^\al=\infty.
\end{equation*}
the limit \eqref{Al10} follows from \eqref{NLEst3}, after decomposing as in \eqref{Al7} and using the smallness in $Z'(I_k)$ of $\omega^{\al,\theta,\pm\infty}_k$. If\footnote{Recall our assumptions on the frames $\mathcal{O}^\al$ described at the beginning of {\bf{Case III}} in the previous section.}
\begin{equation*}
N_k^\al=N_k^\be \text{ and }t_k^\al=t_k^\be \text{ as }k\to\infty  
\end{equation*}
then $\omega^{\be,\theta}_k\omega^{\al,\theta,\pm\infty}_k=0$. Finally, assume that
\begin{equation*}
N_k^\al=N_k^\be \text{ and }\lim_{k\to\infty}N_k^\al|t_k^\al-t_k^\be|^{1/2}=\infty.
\end{equation*}
Fix $\widetilde{\phi}^{\al,\theta,\pm\infty}\in C^\infty_0(\R^3)$ such that $\|\widetilde{\phi}^{\al,\theta,\pm\infty}-\phi^{\al,\theta,\pm\infty}\|_{\dot{H}^1}\leq\theta$ and define
\begin{equation*}
\widetilde{\omega}_k^{\al,\theta,\pm\infty}=\mathbf{1}_{\{\pm(t-t^\al_k)\ge T_{\theta,\al}(N^\al_k)^{-2},\,\vert t\vert\le T_{\theta,\al}^{-1}\}}[e^{i(t-t_k^\al)\Delta}\pi_{x_k^\al}T_{N_k^\al}(\widetilde{\phi}^{\al,\theta,\pm\infty})].
\end{equation*}
Then, using Lemma \ref{NLEst2}, for all $k$ sufficiently large
\begin{equation*}
\|\mathfrak{O}_{4,1}(\omega^{\be,\theta}_k,(\omega^{\al,\theta,\pm\infty}_k-\widetilde{\omega}_k^{\al,\theta,\pm\infty}))\|_{N^1(I_k)}\lesssim\theta
\end{equation*}
Moreover, using \eqref{ELiL1} with $T=N_k|t_k^\al-t_k^\be|^{1/2}$ and $p=\infty$, it follows easily that
\begin{equation*}
\limsup_{k\to\infty}\|\mathfrak{O}_{4,1}(\omega^{\be,\theta}_k,\widetilde{\omega}^{\al,\theta,\pm\infty}_k)\|_{L^1(I_k,H^1)}=0.
\end{equation*}
This completes the proof of \eqref{Al10}.
\end{proof}

\begin{proof}[Proof of \eqref{Asol2}] We compute that, for fixed $J$,
\begin{equation*}
\|F(U^J_{prof,k}+e^{it\Delta}R^J_k)-F(U^J_{prof,k})\|_{N(I_k)}\lesssim\sum_{p=0}^4\|\mathfrak{O}_{p,5-p}(U^J_{prof,k},e^{it\Delta}R^J_k)\|_{N(I_k)},
\end{equation*}
where $\mathfrak{O}_{p,q}(a,b)$ stands for a $p+q$-linear expression with $p$ factors consisting of either $a$ or $\overline{a}$ and $q$ factors consisting of either $b$ or $\overline{b}$. Using Lemma \eqref{NLEst2} and the fact that $U^J_{prof,k}$ is uniformly bounded in $X^1$, we can control the terms corresponding to $p\le 3$ as follows
\begin{equation*}
\Vert \mathfrak{O}_{p,5-p}(U^J_{prof,k},e^{it\Delta}R^J_k)\Vert_{N(I)}\lesssim \Vert e^{it\Delta}R^J_k\Vert_{X^1(I)}^{4-p}\Vert e^{it\Delta}R^J_k\Vert_{Z^\prime(I)}\Vert U^J_{prof,k}\Vert_{X^1(I)}^p\lesssim \Vert e^{it\Delta}R^J_k\Vert_{Z^\prime(I)}.
\end{equation*}
In view of \eqref{SmallnessRterm}, this contribution is acceptable. 

Now, we only need to treat the contribution of $p=4$. Assume $\eps>0$ is fixed. As in the proof of \eqref{bi1} (using also Lemma \ref{EquivFrames} (ii)), there is $A=A(\eps)$ sufficiently large such that for all $J\geq A$ and all $k\geq k_0(J)$
\begin{equation*}
\|U^J_{prof,k}-U^A_{prof,k}\|_{X^1(-1,1)}\leq\eps.
\end{equation*}
In view of Lemma \ref{NLEst2}, it remains to prove that
\begin{equation*}
\limsup_{J\to\infty}\limsup_{k\to\infty}\Vert \mathfrak{O}_{4,1}(U^A_{prof,k},e^{it\Delta}R^J_k)\Vert_{N(I_k)}\lesssim\eps.
\end{equation*}
Using the definition of $U^A_{prof,k}$, it suffices to prove that for any $\al_1,\al_2,\al_3,\al_4\in\{0,1,\ldots,A\}$
\begin{equation}\label{Al30}
\limsup_{J\to\infty}\limsup_{k\to\infty}\Vert \mathfrak{O}_{1,1,1,1,1}(U_k^{\al_1},U_k^{\al_2},U_k^{\al_3},U_k^{\al_4},e^{it\Delta}R^J_k)\Vert_{N(I_k)}\lesssim\eps A^{-4},
\end{equation}
where $U_k^0:=W$.

Fix $\theta=(\eps A^{-4})^{10}$ and apply the decomposition in the first line of \eqref{DecNP} to all nonlinear profiles $U_k^{\al}$, $\al=0,1,\ldots,A$. We may also assume that
\begin{equation*}
T_{\theta,\al}=T_\theta\text{ and }R_{\theta,\alpha}=R_\theta\text{ for any }\al=1,\ldots,A,
\end{equation*}
and that all the bounds in \eqref{DecNP} and \eqref{DecNP2} hold. Using these decompositions we examine now the terms in the expression
\begin{equation*}
\mathfrak{O}_{1,1,1,1,1}(U_k^{\al_1},U_k^{\al_2},U_k^{\al_3},U_k^{\al_4},e^{it\Delta}R^J_k).
\end{equation*}
Recalling that
\begin{equation}\label{Al40}
\limsup_{J\to+\infty}\limsup_{k\to+\infty}\Vert e^{it\Delta}R^J_k\Vert_{Z(I_k)}=0, 
\end{equation}
see \eqref{SmallnessRterm}, and using Lemma \ref{NLEst2}, for \eqref{Al30} it suffices to prove that
\begin{equation*}
\limsup_{J\to\infty}\limsup_{k\to\infty}\Vert \mathfrak{O}_{1,1,1,1,1}(\omega_k^{\al_1,\theta},\omega_k^{\al_2,\theta},\omega_k^{\al_3,\theta},\omega_k^{\al_4,\theta},e^{it\Delta}R^J_k)\Vert_{N(I_k)}\lesssim\eps A^{-4},
\end{equation*}
for any $\al_1,\al_2,\al_3,\al_4\in\{1,\ldots,A\}$. Using Lemma \ref{Al4}, we only need to consider the case $\al_1=\al_2=\al_3=\al_4$, i.e. it remains to prove that for any $\al\in\{1,\ldots,A\}$
\begin{equation}\label{Al31}
\limsup_{J\to\infty}\limsup_{k\to\infty}\Vert \mathfrak{O}_{4,1}(\omega_k^{\al,\theta},e^{it\Delta}R^J_k)\Vert_{N(I_k)}\lesssim\eps A^{-4}.
\end{equation}
 
We apply Lemma \ref{HFLF} with $B$ sufficiently large (depending on $R_\theta$), thus, for any $J\geq A$,
\begin{equation}\label{Al32}
\limsup_{k\to\infty}\Vert \mathfrak{O}_{4,1}(\omega_k^{\al,\theta},P_{>BN_k^\al}e^{it\Delta}R^J_k)\Vert_{N(I_k)}\lesssim\eps A^{-4}.
\end{equation}
We may assume that $B$ is sufficiently large such that, for $k$ large
\begin{equation*}
\|P_{\leq B^{-1}N_k^\al}\omega_k^{\al,\theta}\|_{X^1(I_k)}\leq \eps A^{-4}.
\end{equation*}
Using Lemma \ref{NLEst2} and the bounds \eqref{Al40} and \eqref{Al32}, for \eqref{Al31} it remains to prove that
\begin{equation*}
\limsup_{J\to\infty}\limsup_{k\to\infty}\Vert \mathfrak{O}_{4,1}(P_{>B^{-1}N_k^\al}\omega_k^{\al,\theta},P_{\leq BN_k^\al}e^{it\Delta}R^J_k)\Vert_{N(I_k)}=0.
\end{equation*}
This follows from \eqref{NLEst3} and \eqref{Al40}, which completes the proof.
\end{proof}

\end{document}